\documentclass[12pt,authoryear]{elsarticle}
\journal{}

\makeatletter
\def\ps@pprintTitle{%
 \let\@oddhead\@empty
 \let\@evenhead\@empty
 \def\@oddfoot{\hfill\thepage}%
 \let\@evenfoot\@oddfoot}
\makeatother

\usepackage{amsmath,amssymb,amsfonts,amsthm,graphicx,appendix}
\usepackage[bookmarksnumbered,colorlinks=true]{hyperref}
\usepackage[labelfont=bf]{caption}

\providecommand{\doi}[1]{\href{https://doi.org/#1}{DOI:#1}}
\usepackage{xurl} 
\renewcommand{\doi}[1]{%
 \href{https://doi.org/#1}{\nolinkurl{DOI:#1}}%
}

\usepackage[OT1]{fontenc}
\usepackage{natbib,enumitem,mathtools,mathrsfs,dsfont}
\usepackage{tikz,float}
\usetikzlibrary{arrows.meta}
\usepackage{geometry}
\newtheorem{theorem}{Theorem}[section]
\newtheorem{lemma}[theorem]{Lemma}

\newtheorem{proposition}[theorem]{Proposition}

\theoremstyle{definition}

\newtheorem{remark}[theorem]{Remark}

\numberwithin{equation}{section}

\newcommand{\N}{\mathbb{N}}

\newcommand{\R}{\mathbb{R}}

\newcommand{\PP}{\mathsf{P}}
\newcommand{\EE}{\mathsf{E}}
\newcommand{\Var}{\mathsf{Var}}
\newcommand{\Cov}{\mathsf{Cov}}
\newcommand{\bb}[1]{\boldsymbol{#1}}
\newcommand{\ind}{\mathds{1}}
\newcommand{\rd}{\mathrm{d}}
\newcommand{\e}{\varepsilon}
\newcommand{\ii}{\mathrm{i}}
\renewcommand{\Re}{\mathrm{Re}\, }
\renewcommand{\Im}{\mathrm{Im}\, }
\newcommand{\OO}{\mathcal{O}}
\newcommand{\oo}{\mathrm{o}}
\newcommand{\leqdef}{\vcentcolon=}

\newcommand{\sig}{\sigma}
\newcommand{\logp}{\log_{+}}

\begin{document}

\begin{frontmatter}

\title{Overlap distributions under the Gibbs measure of a Haar model \\of $\log|\zeta|$ on and near the critical line}

\author[a1]{Fr\'{e}d\'{e}ric Ouimet}

\address[a1]{Universit\'{e} du Qu\'{e}bec \`{a} Trois-Rivi\`{e}res, Canada. \ead{frederic.ouimet2@uqtr.ca}\vspace{-7mm}}

\begin{abstract}
We study the Gibbs measure of $H(h) = \Re\sum_{p^{j} \leq X}U_{p}^{j}/(j p^{j(\sig + \ii h)})$, a random field modeling large values of the log-modulus of the Riemann zeta function on and near the critical line, where the $U_{p}$ are independent and uniformly distributed on the complex unit circle. Writing $t = \log \log T$, the cutoff $X$ and real part $\sig$ may vary with $T$ subject to $T^{1/t^{\Phi}} \leq X \leq T^{1/t^{\phi}}$ and $0 \leq \sig - 1/2 \leq C_{\sig}t^{\Phi}/\log T$, for fixed $\Phi \geq \phi > 1/2$ and $C_{\sig} > 0$. On $I_{\theta} = [ - (\log T)^{\theta},(\log T)^{\theta}]$, define the Gibbs measure $\smash{G_{\beta,\sig,T}^{\mathrm{H}}(\rd{h}) \leqdef e^{\beta H(h)}\rd{h}/\int_{I_{\theta}}e^{\beta H(h')}\rd{h'}}$. For every fixed $\theta > - 1$ and inverse temperature $\beta > 0$, the limiting joint distributions of correlations (overlaps) among points sampled from this measure are determined after averaging over the Haar phases. At and below $\beta_{c}(\theta) = 2\sqrt{1 + (\theta \vee 0)}$, the limiting two-overlap distribution is concentrated at $q_{0} = |\theta \wedge 0|$. Above this threshold, it assigns masses $\beta_{c}(\theta)/\beta$ and $1 - \beta_{c}(\theta)/\beta$ to $q_{0}$~and~$1$, respectively, and the limiting joint overlap distribution is described by a Poisson--Dirichlet process of parameter $\beta_{c}(\theta)/\beta$. Approximate Ghirlanda--Guerra identities are shown to hold uniformly for bounded deterministic functions of the sampled positions.
This paper is the first in a series devoted to extending these results to $\log|\zeta|$ near the critical line.
\end{abstract}

\begin{keyword}
branching processes, correlations, Ghirlanda--Guerra identities, Gibbs measure, overlaps, Poisson--Dirichlet process, random Euler products, Riemann zeta function
\MSC[2020]{Primary: 60G60, 60G70; Secondary: 11M06, 60F05, 60F10, 60G57, 82B26}
\end{keyword}

\end{frontmatter}

\section{Introduction}\label{sec:intro}

The Haar field studied here is a probabilistic model for large values of the log-modulus of the Riemann zeta function, $\log|\zeta|$; see \citet{arXiv:1304.0677} for motivation and the pedagogical survey of \citet{arXiv:1601.00582} for background on log-correlated fields and their connection to $\log|\zeta|$. The construction starts from the logarithmic Euler expansion
\[
\log|\zeta(s)| = \Re\sum_{p}\sum_{j \geq 1}\frac{1}{j p^{js}}, \qquad \Re(s) > 1.
\]
If $\tau$ is uniformly distributed on $[T,2T]$, the phases $p^{-\ii\tau}$ attached to any fixed finite collection of distinct primes converge jointly in distribution to independent uniform variables on the complex unit circle as $T \to \infty$; see \citet[Lemma~1.2]{arXiv:1904.08204}. This motivates truncating the expansion at $p^{j} \leq X$ and replacing the prime phases by independent variables $U_{p}$ with the (normalized) Haar measure on the unit circle. For $s = \sig + \ii\tau + \ii h$, this yields the full truncated Haar field $H$, defined by
\[
H(h) = \Re\sum_{p^{j} \leq X}\frac{U_{p}^{j}}{j p^{j(\sig + \ii h)}}, \qquad h\in I_{\theta} \leqdef [ - (\log T)^{\theta},(\log T)^{\theta}].
\]

For the related prime-only model on bounded intervals of the critical line ($\theta = 0$),
\[
X_{h}^{\mathrm{rand}} \leqdef \Re\sum_{p \leq T}\frac{U_{p}}{p^{1/2 + \ii h}}, \qquad h \in [0,1],
\]
\citet{MR3619786} determined the first two orders of the maximum. The Gaussian coupling of \citet{MR4164452}, combined with the tightness theorem of \citet{MR3272323}, yields tightness of $\max_{h \in [0,1]}X_{h}^{\mathrm{rand}} - \log \log T + (3/4)\log \log \log T$. The same coupling gives non-Gaussian multiplicative chaos limits for suitably normalized exponential measures in the subcritical and critical regimes. For $0 < \beta < 2$, \citet{arXiv:1906.08573} showed that the Lebesgue measure of $\{h \in [0,1] : X_{h}^{\mathrm{rand}} > (\beta/2)\log \log T\}$, divided by its expectation, converges in probability to the total mass of the corresponding subcritical chaos measure. For the Gaussian-prime counterpart of $X^{\mathrm{rand}}$, \citet{ADH_2021} and \citet{Chang2024} obtained second-order asymptotics for maxima, matching right-tail bounds, and critical-moment estimates over intervals of varying length, including $\theta = (\log{\log T})^{-\alpha}$ with $\alpha \in (0,1)$.

At low temperature ($\beta > 2$), \citet{arXiv:1706.08462} identified the limiting two-overlap distribution of $X^{\mathrm{rand}}$ as the two-point mixture $(2/\beta)\delta_{0} + (1 - 2/\beta)\delta_{1}$, and \citet{MR3841407} established the approximate Ghirlanda--Guerra (GG) identities and the Poisson--Dirichlet description of the joint overlap distributions in the limit $T \to \infty$. Related universal Poisson--Dirichlet limits for log-correlated Gaussian fields were established by \citet{GangulyNam2025}. However, $X^{\mathrm{rand}}$ and the Haar field $H$ are non-Gaussian, so their results do not apply. The independent prime contributions nevertheless allow a derivation of the GG identities through approximate integration by parts.

Let $T > 0$ tend to infinity, write $t \leqdef \log \log T$, and fix the interval-length exponent $\theta > - 1$ and the inverse temperature $\beta > 0$. For fixed constants $\Phi \geq \phi > 1/2$ and $C_{\sig} > 0$, let the real part $\sig = \sig(T)$ and the prime-power cutoff $X = X(T)$ be deterministic functions satisfying, for all sufficiently large $T$,
\begin{equation}\label{eq:fixed.parameters}
T^{1/t^{\Phi}} \leq X \leq T^{1/t^{\phi}}, \qquad 0 \leq \sig - \frac{1}{2} \leq \frac{C_{\sig}t^{\Phi}}{\log T}.
\end{equation}
This range includes $\sig = 1/2$ and, for example, the choices $X = T^{1/t^{4}}$ and $\sig = 1/2 + \delta t^{6}/\log T$ for any fixed $\delta \geq 0$, by taking $\Phi = 6$, $\phi = 4$, and $C_{\sig} \geq \delta$. The dependence on $X$ is suppressed in the notation for the Haar field, its Gibbs measure, and its overlaps. The Haar field is sampled on the interval $I_{\theta}$ according to its Gibbs measure
\begin{equation}\label{eq:haar.field}
G_{\beta,\sig,T}^{\mathrm{H}}(\rd{h}) = \frac{\exp(\beta H(h))\rd{h}}{\int_{I_{\theta}}\exp(\beta H(h'))\rd{h'}},
\end{equation}
where $\rd{h}$ denotes integration with respect to Lebesgue measure. Probability and expectation over the Haar phases are denoted by $\PP_{\mathrm{H}}$ and $\EE_{\mathrm{H}}$. The pointwise variance and the correlation, or overlap, are
\[
V_{T} \leqdef \EE_{\mathrm{H}}[H(h)^{2}], \qquad R_{\sig,T}^{\mathrm{H}}(h,h') \leqdef \frac{\EE_{\mathrm{H}}[H(h)H(h')]}{V_{T}}.
\]
The Haar field is stationary by rotational invariance of the independent phases $U_{p}$, so $V_{T}$ is independent of $h$. It is also logarithmically correlated: uniformly in distinct $h,h' \in I_{\theta}$,
\[
\EE_{\mathrm{H}}[H(h)H(h')]
= \frac{1}{2}\sum_{p^{j} \leq X}\frac{\cos(j(h - h')\log{p})}{j^{2}p^{2j\sig}}
= \frac{1}{2}\min\left\{t,\logp\left(\frac{1}{|h - h'|}\right)\right\} + \OO(\log{t}),
\]
as $T \to \infty$, where $\logp{y} = \max\{\log{y},0\}$. Moreover, $V_{T} = t / 2 + \OO(\log{t})$; see Lemma~\ref{lem:covariance.input}.

Sampling $h$'s from the Gibbs measure favors large values of the Haar field, with larger $\beta$ placing greater relative weight on higher values. The joint overlaps describe the correlation structure at the sampled positions and thus provide a way to study the geometry of large values of the Haar field, including the hierarchical clustering associated with ultrametricity; see \citet{MR3628881} for a general theory of approximate ultrametricity for random measures.

The remainder of the paper is organized as follows. Section~\ref{sec:definitions} introduces the additional definitions and notation needed to state the results. Section~\ref{sec:haar} presents the main result and discusses its conclusions. Section~\ref{sec:proofs} contains the supporting lemmas and all proofs.

\section{Definitions and notation}\label{sec:definitions}

\subsection{Parameters and general conventions}\label{subsec:parameters}

Throughout, the deterministic parameters $(\sig,X)$ satisfy \eqref{eq:fixed.parameters}. Define the distance from the critical line, the half-length of $I_{\theta}$, and the effective logarithmic cutoff by
\[
\eta_{\sig} \leqdef \sig - 1/2, \qquad L_{T} \leqdef (\log T)^{\theta}, \qquad \Lambda_{T} \leqdef \min\{\log{X},\eta_{\sig}^{-1}\},
\]
with the convention $\eta_{\sig}^{-1} = \infty$ when $\eta_{\sig} = 0$. In particular, $\Lambda_{T} = \log{X}$ on the critical line. The parameter range \eqref{eq:fixed.parameters} gives
\begin{equation}\label{eq:effective.cutoff.bounds}
\frac{e^{t}}{\max\{1,C_{\sig}\}t^{\Phi}} \leq \Lambda_{T} \leq \frac{e^{t}}{t^{\phi}}, \qquad \log{\log{X}} = t + \OO(\log{t}), \qquad \log{\Lambda_{T}} = t + \OO(\log{t}).
\end{equation}
Also, $I_{\theta} = [ - L_{T},L_{T}]$. The smallest limiting overlap and the critical inverse temperature are
\[
q_{0} \leqdef |\theta \wedge 0|, \qquad
\beta_{c}(\theta) \leqdef
\begin{cases}
2, & \theta \leq 0, \\
2\sqrt{1 + \theta}, & \theta > 0.
\end{cases}
\]
The regimes $\beta < \beta_{c}(\theta)$, $\beta = \beta_{c}(\theta)$, and $\beta > \beta_{c}(\theta)$ are called subcritical (high temperature), critical, and supercritical (low temperature), respectively.

For $n \in \N \equiv \{1,2,\ldots\}$, $\mathrm{Id}_{n}$ is the $n \times n$ identity matrix, and $\bb{1}$ is the column vector of $n$ ones. For any set $A$, $\ind_{A}$ denotes its indicator. For real $a$ and $b$, write $a \wedge b = \min\{a,b\}$, $a \vee b = \max\{a,b\}$, and $a_{+} = \max\{a,0\}$. For $y > 0$, write $\logp{y} = \max\{\log{y},0\}$. In prime sums, $p$ ranges over primes, and $j$ ranges over positive integers when indexing prime powers.

For a positive comparison function $g$, the notation $f = \OO(g)$ or $f \ll g$ means that $|f|$ is bounded by a constant multiple of $g$, while $f = \oo(g)$ means that $f/g \to 0$. For positive $f$ and $g$, the notation $f \asymp g$ means that their ratio is bounded above and below by positive constants. Unless another limit is specified, all limits are taken as $T \to \infty$. Positive finite constants, usually denoted by $c$ or $C$, may change from one occurrence to the next. They may depend on fixed parameters, including $\phi$, $\Phi$, and $C_{\sig}$, and explicitly fixed cutoffs, but not on $T$ or the choice of $(\sig,X)$ within \eqref{eq:fixed.parameters}; subscripts record any additional indicated dependence. Probability and expectation for auxiliary random objects, including the Gaussian replacements, Gaussian forests, and limiting overlap arrays introduced below, are denoted by $\PP$ and $\EE$ when their law is clear from the context.

\subsection{The Haar field, Gibbs sampling, and overlaps}\label{subsec:haar.definitions}

For $0 < \alpha \leq 1$, the Haar field truncated at prime scale $\alpha$ is
\[
H_{\alpha}(h) = \Re \sum_{p^{j} \leq \exp((\log{X})^{\alpha})} \frac{U_{p}^{j}}{j p^{j(\sig + \ii h)}}, \qquad h\in I_{\theta},
\]
so that $H_{1} = H$. Its cross-covariance kernel with the full Haar field is
\begin{equation}\label{eq:haar.covariance.definitions}
K_{\alpha}(h,h') \leqdef \EE_{\mathrm{H}}[H_{\alpha}(h)H(h')].
\end{equation}
In particular, $V_{T} = K_{1}(h,h)$ and $R_{\sig,T}^{\mathrm{H}}(h,h') = K_{1}(h,h')/V_{T}$.

To clarify the role of $\theta$ in the log-correlation structure of $H$, consider only the terms with $j = 1$: higher prime powers contribute $\OO(1)$ to covariances by \eqref{eq:pnt.higher.powers}. When $ - 1 < \theta < 0$, for every fixed $0 < \e < q_{0}$,
\[
\sup_{h \in I_{\theta},\,\log{p} \leq (\log T)^{q_{0} - \e}} |h\log{p}| \leq (\log T)^{-\e} \to 0, \qquad T \to \infty.
\]
Thus $p^{-\ii h} = 1 + \oo(1)$ uniformly over these primes and over $h \in I_{\theta}$, so their spatial phases are approximately constant throughout the interval. The normalized prime-scale coordinate $\alpha$ corresponds to the cutoff $\log{p} \leq (\log{X})^{\alpha}$. Since $\log{\log{X}}/t \to 1$ by \eqref{eq:fixed.parameters}, the prime contributions are approximately common at every fixed scale $\alpha < q_{0}$, whereas scales $\alpha > q_{0}$ produce nontrivial variation in $h$. When $\theta > 0$, the interval contains about $(\log T)^{\theta}$ subintervals of bounded length, and values of the Haar field at positions separated by a distance bounded below by a positive constant have correlation tending to zero, by Lemma~\ref{lem:covariance.input}. The trees in Figure~\ref{fig:prime.scale.regimes} illustrate the leading covariance structure in these two regimes.

\begin{figure}[H]
\centering
\includegraphics[width=\textwidth]{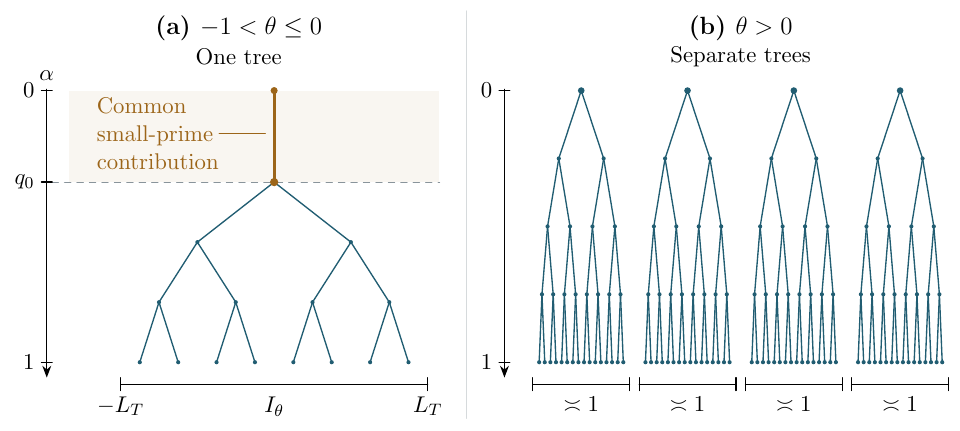}
\caption{Schematic prime-scale description on $I_{\theta}$. The vertical coordinate is $\alpha$, with prime cutoff $\log{p} \leq (\log{X})^{\alpha}$; leaves represent spatial points separated on the scale $(\log T)^{-1}$. For $ - 1 < \theta \leq 0$, the common trunk represents the approximately common prime contributions at normalized scales strictly below $q_{0}$, followed by branching over the remaining depth $1 + \theta$. The common trunk disappears when $\theta = 0$. For $\theta > 0$, about $(\log T)^{\theta}$ bounded subintervals are represented by separate trees of depth $1$, each with about $\log T$ leaves; this depicts the vanishing correlation between values in well-separated subintervals.}\label{fig:prime.scale.regimes}
\end{figure}

The partition function and free energy of the Haar field are
\[
Z_{\beta,\sig,T}^{\mathrm{H}} \leqdef \int_{I_{\theta}}e^{\beta H(h)}\rd{h}, \qquad \frac{1}{t}\log Z_{\beta,\sig,T}^{\mathrm{H}}.
\]
The unnormalized quantity $\log Z_{\beta,\sig,T}^{\mathrm{H}}$ is called the log-partition function. The Gibbs measure in \eqref{eq:haar.field} is a random probability measure on $I_{\theta}$ equipped with its Borel $\sigma$-algebra $\mathscr{B}(I_{\theta})$.

For a probability measure $\mu$ and a bounded measurable function $\Psi$ of $n$ positions, $\mu^{\times n}[\Psi(h_{1},\ldots,h_{n})]$ denotes integration against its $n$-fold product. Write $\bb{h} = (h_{1},\ldots,h_{n})$. Conditional on the Haar phases, the sampled positions $h_{1},\ldots,h_{n}$ are independent with common law $G_{\beta,\sig,T}^{\mathrm{H}}$. The annealed expectation $\EE_{\mathrm{H}}(G_{\beta,\sig,T}^{\mathrm{H}})^{\times n}[\Psi(\bb{h})]$ also averages over the phases. The two-overlap distribution is the law of $R_{\sig,T}^{\mathrm{H}}(h_{1},h_{2})$ under this averaged sampling procedure, and the finite overlap array is $(R_{\sig,T}^{\mathrm{H}}(h_{i},h_{j}))_{1 \leq i,j \leq n}$. Since $R_{\sig,T}^{\mathrm{H}}$ is a correlation kernel, its values belong to $[ - 1,1]$ and its diagonal entries equal $1$.

The deterministic logarithmic distance kernel is
\[
r_{T}(h,h') \leqdef \max\{0,\min\{1,t^{-1}\log(1 / |h - h'|)\}\}, \qquad h \neq h',
\]
with $r_{T}(h,h) = 1$. Lemma~\ref{lem:covariance.input} shows that this kernel approximates the overlap $R_{\sig,T}^{\mathrm{H}}(h,h')$ uniformly in $h,h' \in I_{\theta}$ with error $\OO((\log{t})/t)$.

\subsection{Poisson--Dirichlet weights and the perturbed Haar field}\label{subsec:pd.perturbation}

A Poisson--Dirichlet process $\bb{\xi} = (\xi_{j})_{j \in \N}$ with parameter $\lambda \in (0,1)$ is the decreasing rearrangement of normalized atoms of a Poisson random measure on $(0,\infty)$ with intensity $\lambda v^{-1-\lambda}\rd{v}$. More precisely, if $(\eta_{j})_{j \in \N}$ are these atoms, then $\bb{\xi}$ is the decreasing rearrangement of $(\eta_{j}/\sum_{k}\eta_{k})_{j \in \N}$. Counts in disjoint measurable sets are independent, and the count in a set $A$ is Poisson with mean $\int_{A}\lambda v^{-1-\lambda}\rd{v}$ whenever this integral is finite.

Fix $q_{0} < \alpha < 1$, and let $H^{u} = H + uH_{\alpha}$ be the perturbed Haar field, with multiplier $1 + u$ on the scales below $\alpha$. The perturbation parameter is restricted to $u \in [ - 1/2,1/2]$, but any compact subinterval of $( - 1,\infty)$ containing $0$ in its interior would do. Define the free energy of $H^{u}$ by
\begin{equation}\label{eq:perturbed.free.energy.definition}
f_{T}(u) = \frac{1}{t}\log \int_{I_{\theta}} \exp(\beta(H(h) + uH_{\alpha}(h))) \rd{h}.
\end{equation}

\section{Main result}\label{sec:haar}

Our main result, Proposition~\ref{prop:haar.laws}, establishes three conclusions for the full Haar field $H$ throughout the parameter range \eqref{eq:fixed.parameters}, for every $\theta > - 1$ and $\beta > 0$: the limiting two-overlap distribution, approximate GG identities for bounded measurable functions of the sampled positions, and the limiting joint distribution of the overlaps of every fixed number of sampled positions. These conclusions are closely related: the two-overlap distribution determines the possible limiting overlap values and their probabilities, while the GG identities give consistency relations as further positions are sampled. Together, they lead to the Poisson--Dirichlet description of the limiting joint overlap distribution at low temperature.

The GG identities were introduced by \citet{MR1662161} in the study of the Sherrington--Kirkpatrick model, whose random Hamiltonian is a Gaussian field indexed by configurations in $\{ - 1,1\}^{N}$. For the generalized random energy model (GREM), a Gaussian field with hierarchical covariance, \citet{MR2070334,MR2070335} used these identities and the two-overlap distribution to characterize the limiting Gibbs structure through hierarchical Poisson cascades (called Ruelle's probability cascades). For mixed $p$-spin models, \citet{MR2600075} established GG identities under suitable assumptions on the interactions, and \citet{MR3901654} proved universality results for these identities and spin distributions with respect to the disorder. General background on Gibbs measures, overlaps, and their role in spin glass theory can be found in \citet{MR3052333} and \citet{MR2731561,MR3024566}.

For log-correlated Gaussian fields, two-point limiting overlap distributions and Poisson--Dirichlet descriptions of the limiting joint overlap distributions at low temperature were obtained by \citet{MR3211001} for a nonhierarchical model related to the logarithmic random energy model, by \citet{MR3354619} for the two-dimensional discrete Gaussian free field, and by \citet{MR3539644} for the branching random walk. These descriptions combine the two-overlap distribution with GG identities. For the scale-inhomogeneous discrete Gaussian free field, \citet{MR3731796} established GG identities and identified the limiting overlap structure with that of the GREM. In the non-Gaussian setting of the prime-only field $X_{h}^{\mathrm{rand}}$ mentioned in Section~\ref{sec:intro}, \citet{MR3841407} likewise obtained a Poisson--Dirichlet joint overlap limit by combining GG identities with the two-overlap distribution previously identified by \citet{arXiv:1706.08462}. The following proposition extends these results to $H$.

\begin{proposition}[Overlap distributions and approximate GG identities for the Haar field]\label{prop:haar.laws}
For every fixed $\theta > - 1$ and $\beta > 0$ and every deterministic choice of $(\sig,X)$ satisfying \eqref{eq:fixed.parameters},
\begin{equation}\label{eq:haar.two.overlap}
\EE_{\mathrm{H}} (G_{\beta,\sig,T}^{\mathrm{H}})^{\times 2}[f(R_{\sig,T}^{\mathrm{H}}(h_{1},h_{2}))]
\to \min\left\{1,\frac{\beta_{c}(\theta)}{\beta}\right\}f(q_{0}) + \left(1 - \min\left\{1,\frac{\beta_{c}(\theta)}{\beta}\right\}\right)f(1)
\end{equation}
as $T \to \infty$, for every continuous $f$ on $[ - 1,1]$. For every fixed $n \in \N$ and $1 \leq k \leq n$, the approximate GG identity
\begin{equation}\label{eq:haar.gg}
\begin{aligned}
\EE_{\mathrm{H}} (G_{\beta,\sig,T}^{\mathrm{H}})^{\times(n+1)}[R_{\sig,T}^{\mathrm{H}}(h_{k},h_{n+1})\Psi_{T}(\bb{h})]
&= \frac{1}{n} \EE_{\mathrm{H}} (G_{\beta,\sig,T}^{\mathrm{H}})^{\times 2}[R_{\sig,T}^{\mathrm{H}}(h_{1},h_{2})] \EE_{\mathrm{H}} (G_{\beta,\sig,T}^{\mathrm{H}})^{\times n}[\Psi_{T}(\bb{h})] \\
&\quad + \frac{1}{n}\sum_{\substack{1 \leq j \leq n \\j \neq k}} \EE_{\mathrm{H}} (G_{\beta,\sig,T}^{\mathrm{H}})^{\times n} [R_{\sig,T}^{\mathrm{H}}(h_{k},h_{j})\Psi_{T}(\bb{h})] + \oo(1)
\end{aligned}
\end{equation}
holds uniformly over measurable functions $\Psi_{T}:I_{\theta}^{n} \to [ - 1,1]$. If $\beta \leq \beta_{c}(\theta)$, define the limiting overlap matrix $Q^{\mathrm{high}} \leqdef (1 - q_{0})\mathrm{Id}_{n} + q_{0}\bb{1}\bb{1}^{\mathsf{T}}$. If $\beta > \beta_{c}(\theta)$, let $\bb{\xi}$ have Poisson--Dirichlet parameter $\beta_{c}(\theta)/\beta$, sample conditionally independent group labels $K_{1},\ldots,K_{n}$ with probabilities $\bb{\xi}$, and define the random limiting overlap matrix $Q^{\mathrm{low}} \leqdef (q_{0} + (1 - q_{0})\ind_{\{K_{\ell} = K_{\ell'}\}})_{1 \leq \ell,\ell' \leq n}$. For every continuous function $\psi:[ - 1,1]^{n \times n} \to \R$,
\begin{equation}\label{eq:haar.joint.array}
\lim_{T \to \infty}\EE_{\mathrm{H}}(G_{\beta,\sig,T}^{\mathrm{H}})^{\times n}\left[\psi\left((R_{\sig,T}^{\mathrm{H}}(h_{\ell},h_{\ell'}))_{1 \leq \ell,\ell' \leq n}\right)\right]
= \begin{cases}
\psi(Q^{\mathrm{high}}), & \beta \leq \beta_{c}(\theta), \\
\EE_{\bb{\xi}}\big[\EE[\psi(Q^{\mathrm{low}}) \mid \bb{\xi}]\big], & \beta > \beta_{c}(\theta).
\end{cases}
\end{equation}
\end{proposition}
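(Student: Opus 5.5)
The plan follows the scheme of \citet{MR3841407} and \citet{arXiv:1706.08462}, adapted to the Haar field on $I_{\theta}$ and to the parameter range \eqref{eq:fixed.parameters}. There are three steps: compute the limiting perturbed free energy, derive the two-overlap law and the approximate GG identities from its derivative, and then identify the joint array.

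\textbf{Step 1: the perturbed free energy.} We will compute the limit of $f_{T}(u)$ from \eqref{eq:perturbed.free.energy.definition} for every $u \in [-1/2,1/2]$ and fixed $q_{0} < \alpha < 1$. We will show that $f_{T}(u) \to f(u)$ in probability and in $L^{1}(\PP_{\mathrm{H}})$. Here $f(u)$ is the free energy of a two-level GREM-type model. In the prime scale $\alpha' \in [0,q_{0}]$ the field is a common trunk. On $(q_{0},\alpha]$ the variance per unit scale is $(1+u)^{2}/2$, and on $(\alpha,1]$ it is $1/2$. The number of branches is $e^{(1+\theta)t}$ spread over depth $1 - q_{0}$ (for $\theta \leq 0$), or $(\log T)^{\theta}$ independent trees of depth $1$ (for $\theta > 0$). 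This gives the explicit concave-hull formula of Derrida's GREM.

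\emph{Upper bound.} We use the first moment of $Z$ together with a union/first-moment bound on the number of $h$ in a mesh where the increments $H_{\alpha}$ and $H - H_{\alpha}$ exceed prescribed levels. This needs exponential-moment estimates for sums of independent $\Re(U_{p}^{j}p^{-j(\sig+\ii h)})/j$. These follow from the independence of the $U_{p}$ and Taylor expansion of $\log \EE e^{\lambda \Re(\cdot)}$, with the covariance asymptotics of Lemma~\ref{lem:covariance.input}.

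\emph{Lower bound.} We use a second-moment argument on a restricted count of $h$ with a prescribed increment path. Decorrelation between points at distance $\gg e^{-(\log X)^{\alpha}}$ comes from Lemma~\ref{lem:covariance.input}. The non-Gaussianity is handled by a tilted (Esscher) change of measure on each prime. We obtain concentration of $\log Z$ about its mean, giving the $L^{1}$ convergence, from a bounded-differences or Gaussian-type concentration in the independent phases $U_{p}$, whose individual influence is $\OO(p^{-1/2})$.

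\textbf{Step 2: two-overlap law and GG identities.} Since $f_{T}$ is convex in $u$, the convergence of Step 1 transfers to derivatives at every point where $f$ is differentiable; in particular $f$ is differentiable at $u = 0$. We compute
\[
f_{T}'(0) = \frac{\beta}{t} G^{\mathrm{H}}[H_{\alpha}(h)],
\]
and approximate Gaussian integration by parts over the $U_{p}$ turns $\EE_{\mathrm{H}} f_{T}'(0)$ into $\beta^{2}\bigl(K_{\alpha}(h,h)-\EE_{\mathrm{H}}(G^{\mathrm{H}})^{\times 2}[K_{\alpha}(h_1,h_2)]\bigr)/t + \oo(1)$. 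This uses a cumulant expansion for the Haar variables, in which third and higher cumulants contribute $\sum_{p} p^{-3/2} = \OO(1)$, negligible after dividing by $t$. From this identity we read off $\EE_{\mathrm{H}} (G^{\mathrm{H}})^{\times 2}[\ind\{R \geq \alpha\}]$ for every $\alpha$, which yields \eqref{eq:haar.two.overlap}. The kernel comparison $R^{\mathrm{H}} \approx r_{T}$ and $K_{\alpha}/V_{T} \approx \min\{R,\alpha\}$ from Lemma~\ref{lem:covariance.input} convert between $K_{\alpha}$ and $R$.

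For \eqref{eq:haar.gg} we follow the standard route. First, convexity together with the concentration of Step 1 gives
\[
\EE_{\mathrm{H}} G^{\mathrm{H}}\bigl|H_{\alpha}-G^{\mathrm{H}}[H_{\alpha}]\bigr|/t \to 0
\]
and the corresponding quenched concentration statement. Second, we integrate by parts in $\EE_{\mathrm{H}}(G^{\mathrm{H}})^{\times n}[(H_{\alpha}(h_k)/t)\Psi_T]$, again via the cumulant expansion. Uniformity in $\Psi_T$ holds because all error bounds depend only on $\|\Psi_T\|_\infty \leq 1$. This yields GG identities for the truncated overlaps $\min\{R,\alpha\}$ for all $\alpha$ in a dense set. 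Since the two-overlap limit is supported on $\{q_0,1\}$, these are upgraded to the identity for $R$ itself.

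\textbf{Step 3: joint array.} In the high-temperature case, the limiting $R_{12}$ is concentrated at $q_{0}$, so every off-diagonal entry tends to $q_{0}$ in probability and \eqref{eq:haar.joint.array} follows. In the low-temperature case, the GG identities together with a two-valued overlap distribution determine the array law as that of the Poisson--Dirichlet construction with parameter equal to the mass at $q_0$, namely $\beta_c(\theta)/\beta$. This is via tightness, Talagrand's positivity/ultrametricity for two-valued overlaps, and the recursive moment computation of \citet{MR3211001,MR3841407}.

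The main obstacle is Step 1: the lower bound in the second-moment argument for the non-Gaussian field, uniformly over the range of $\theta$ and \eqref{eq:fixed.parameters}. A secondary difficulty is controlling the Haar integration-by-parts error uniformly in $\Psi_T$.
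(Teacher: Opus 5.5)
Your Steps 2 and 3 match the paper: per-prime cavity/cumulant integration by parts with error $\OO(b_p^3)$, reading off the integrated distribution function of the overlap from $f'(0)$, and then the partition and Poisson--Dirichlet sampling argument. Step 1 and your derivation of the GG identity, however, take different routes.

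\emph{Free energy.} You propose a direct GREM-style first/second moment analysis of the non-Gaussian field, with per-prime Esscher tilting for the two-point estimates. The paper never counts high points of $H$. Instead it:
\begin{enumerate}
\item swaps each complete prime block $B_p^{u}$ for a Gaussian block with the same covariance, a Lindeberg argument that needs only the third-derivative bound on $\Upsilon$ and $\sum_p b_p^3 = \OO(1)$;
\item discretizes at mesh $be^{-t}$ using the derivative-supremum bound;
\item compares entrywise with full and retained Gaussian forests through the interpolation identity, with covariance errors $\OO(\log t + \log b)$;
\item evaluates the forest free energies by a first-moment bound and a Galton--Watson selection, letting $b \to \infty$ at the end.
\end{enumerate}
This sidesteps exactly what you identify as the main obstacle: multiscale two-point tilted estimates for non-Gaussian sums, uniformly over $\theta$ and \eqref{eq:fixed.parameters}. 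Your route is more direct and would give finer information on the high points of $H$ itself, but it is considerably heavier.

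\emph{GG identity.} You use $\alpha < 1$ and then upgrade from $\min\{\alpha,R\}$ to $R$ via the two-valued limit. That works. The identity is invariant under affine maps, and $\ell(x) = q_0 + (1-q_0)(x - q_0)/(\alpha - q_0)$ satisfies $\EE_{\mathrm{H}}(G_{\beta,\sig,T}^{\mathrm{H}})^{\times 2}[|R_{\sig,T}^{\mathrm{H}}(h_1,h_2) - \ell(\min\{\alpha,R_{\sig,T}^{\mathrm{H}}(h_1,h_2)\})|] \to 0$, independently of $\Psi_T$. The paper instead takes $\alpha = 1$ and obtains concentration of $H(h)/t$ from differentiability of $f_\theta$ in $\beta$. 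Its GG identity therefore does not depend on the two-overlap law.

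\emph{Two corrections.}
\begin{enumerate}
\item Decorrelation at scale $\alpha$ occurs at distances $\gg (\log X)^{-\alpha}$, since $\log p \le (\log X)^{\alpha}$. It does not occur at $e^{-(\log X)^{\alpha}}$.
\item Differentiability of $f$ at $u = 0$ does not follow from convexity. Moreover, $u = 0$ is exactly where the entropy-to-variance ratios of the two stages coincide, so the GREM hull switches there from one effective level ($u > 0$) to two ($u < 0$). You must verify it, for instance by rescaling $w_1 = v_1/(1+u)$ so the feasible set is independent of $u$, and using uniqueness of the maximizer at $u = 0$. This computation also gives $f'(0) = \beta(\alpha - q_0)\min\{\beta,\beta_c(\theta)\}/2$, which you need to obtain $\beta_c(\theta)$ and the two masses.
\end{enumerate}
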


The sampling meaning of \eqref{eq:haar.gg} is as follows. Conditional on the Haar phases, sample $h_{1},\ldots,h_{n + 1}$ independently from $G_{\beta,\sig,T}^{\mathrm{H}}$ and fix $k \in \{1,\ldots,n\}$. After multiplication by any bounded deterministic function of the first $n$ positions and averaging over both the positions and the phases, the overlap $R_{\sig,T}^{\mathrm{H}}(h_{k},h_{n + 1})$ has asymptotically the same mean as a mixture assigning weight $1/n$ to an independent draw from the averaged two-overlap distribution and weight $1/n$ to each previously observed overlap $R_{\sig,T}^{\mathrm{H}}(h_{k},h_{j})$, $j \neq k$. The independent draw here is independent of the entire first sampling experiment, including its Haar phases.

At and below $\beta_{c}(\theta)$, every off-diagonal entry of the limiting overlap array equals $q_{0}$. Above this threshold, the sampled positions form groups in the limiting description: overlaps within a group equal $1$, and overlaps between distinct groups equal $q_{0}$. The two-overlap distribution gives the probability that two samples belong to the same group, while the GG identities determine how these groups are sampled jointly. Equation~\eqref{eq:haar.joint.array} describes the resulting groups through Poisson--Dirichlet weights of parameter $\beta_{c}(\theta)/\beta$.

The proof of Proposition~\ref{prop:haar.laws} is given in Section~\ref{sec:proofs}. Subsection~\ref{subsec:haar.identities} deduces its conclusions from four inputs: the free energy obtained by perturbing the Haar field by a multiple of its truncation $H_{\alpha}$, estimates for the covariance between $H_{\alpha}$ and $H$, an approximate integration by parts identity, and concentration of the normalized Haar field value $H(h)/t$ around a deterministic limit when $h$ is sampled from the Haar Gibbs measure. Subsection~\ref{subsec:PNT} collects the prime number theorem estimates used in their proofs, and the subsequent subsections establish these inputs. Figure~\ref{fig:haar.dependencies} records the dependencies between the main proposition and the supporting lemmas.

\vspace{3mm}
\begin{figure}[!ht]
\centering
\begin{tikzpicture}[x = 1cm, y = 1cm, every node/.style = {font = \small, align = center, execute at begin node = {\hyphenpenalty = 10000\exhyphenpenalty = 10000\relax}}, result/.style = {draw, rounded corners = 2pt, fill = black!15, text width = 3.8cm, minimum height = 1.1cm, inner sep = 4pt}, dependency/.style = {-{Stealth[length = 2mm]}, line width = 0.5pt}]
\node[result] (replacement) at (0,0) {Lemma~\ref{lem:replacement.estimates} \\Replacement, concentration, and supremum estimates};
\node[result] (comparison) at (5.1,0) {Lemma~\ref{lem:forest.comparison} \\Comparison with Gaussian forests};
\node[result] (forest) at (10.2,0) {Lemma~\ref{lem:forest.free.energy} \\Gaussian forest free energy};
\node[result] (thermal) at (0, - 2.4) {Lemma~\ref{lem:thermal.input} \\Energy concentration};
\node[result] (freeenergy) at (5.1, - 2.4) {Lemma~\ref{lem:free.energy.input} \\Free energy of the perturbed Haar field $H^{u}$};
\node[result] (covariance) at (10.2, - 2.4) {Lemma~\ref{lem:covariance.input} \\Truncated covariance geometry};
\node[result] (ibp) at (0, - 4.8) {Lemma~\ref{lem:haar.ibp} \\Approximate integration by parts};
\node[result] (laws) at (5.1, - 4.8) {Proposition~\ref{prop:haar.laws} \\Overlap distributions and approximate GG identities for $H$};
\draw[dependency] (replacement.east) -- (comparison.west);
\draw[dependency] (replacement.south east) -- (freeenergy.north west);
\draw[dependency] (replacement.south) -- (thermal.north);
\draw[dependency] (comparison.south) -- (freeenergy.north);
\draw[dependency] (forest.south west) -- (freeenergy.north east);
\draw[dependency] (freeenergy.west) -- (thermal.east);
\draw[dependency] (freeenergy.south) -- (laws.north);
\draw[dependency] (thermal.south east) -- (laws.north west);
\draw[dependency] (ibp.east) -- (laws.west);
\draw[dependency] (covariance.south west) -- (laws.north east);
\draw[dependency,dashed] (comparison.east) -- (forest.west);
\draw[dependency,dashed] (replacement.north) -- (0,1.5) -- (10.2,1.5) -- (forest.north);
\end{tikzpicture}
\caption{Proof dependencies for Proposition~\ref{prop:haar.laws} and the supporting lemmas of Section~\ref{sec:proofs}. Dashed arrows indicate that the proof of Lemma~\ref{lem:forest.free.energy} also uses the Gaussian concentration argument from the proof of Lemma~\ref{lem:replacement.estimates} and the treatment of the common Gaussian trunk in the proof of Lemma~\ref{lem:forest.comparison}.}\label{fig:haar.dependencies}
\end{figure}

\section{Proofs}\label{sec:proofs}

\subsection{Deduction of the overlap distributions and approximate GG identities}\label{subsec:haar.identities}

The limiting two-overlap distribution of $R_{\sig,T}^{\mathrm{H}}(h_{1},h_{2})$ under $\EE_{\mathrm{H}}(G_{\beta,\sig,T}^{\mathrm{H}})^{\times 2}$ is determined by combining differentiation of the perturbed free energy $f_{T}(u)$ in \eqref{eq:perturbed.free.energy.definition} with the approximate integration by parts identity \eqref{eq:haar.ibp} of Lemma~\ref{lem:haar.ibp}. Lemma~\ref{lem:free.energy.input} gives the limiting free energy and its derivative. The differentiation is with respect to $u$ at $u = 0$, where $u$ multiplies the Haar field $H_{\alpha}$ truncated at prime scale $\alpha$. Applied to the full Haar field $H$, this approximate integration by parts identity also gives the approximate GG identities for bounded measurable functions of the sampled positions once $H(h) / t$ concentrates around a deterministic limit under the Haar Gibbs measure, as stated in Lemma~\ref{lem:thermal.input}. These two conclusions then determine the limiting joint distribution of the overlaps. The following lemmas isolate these steps before their proofs.

Recall the perturbed Haar field $H^{u} = H + uH_{\alpha}$ and its free energy $f_{T}(u)$ from \eqref{eq:perturbed.free.energy.definition}.

\begin{lemma}[Free energy of the perturbed Haar field $H^{u}$]\label{lem:free.energy.input}
For fixed $q_{0} < \alpha < 1$, the random functions $f_{T}(u)$ converge, as $T \to \infty$, pointwise in probability and in $L^{1}$, for $u$ in a neighborhood of zero, to a deterministic function $f(u)$ that is differentiable at zero. Its value at zero is the unperturbed limiting free energy $f_{\theta}(\beta)$, given by
\begin{equation}\label{eq:unperturbed.free.energy}
f_{\theta}(\beta)
=
\begin{cases}
\theta + \beta^{2}(1 - q_{0}) / 4, & \beta \leq \beta_{c}(\theta), \\
\beta(1 - q_{0})\beta_{c}(\theta) / 2 - 1, & \beta > \beta_{c}(\theta).
\end{cases}
\end{equation}
The function $f_{\theta}$ is differentiable at every $\beta > 0$, including $\beta_{c}(\theta)$. The left and right derivatives in $u$ at zero also agree:
\begin{equation}\label{eq:perturbation.derivative}
f'(0)
=
\begin{cases}
\beta^{2}(\alpha - q_{0}) / 2, & \beta \leq \beta_{c}(\theta), \\
\beta\beta_{c}(\theta)(\alpha - q_{0}) / 2, & \beta > \beta_{c}(\theta).
\end{cases}
\end{equation}
Moreover, $\EE_{\mathrm{H}}[f_{T}'(0)] \to f'(0)$.

\end{lemma}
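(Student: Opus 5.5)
The proof follows the dependency chart in Figure~\ref{fig:haar.dependencies}. The idea is to compare $H^{u}$ with a Gaussian forest field whose free energy can be computed explicitly, and then transfer the result. The steps are: a Gaussian replacement of the Haar field, a comparison of the free energy with that of a Gaussian forest, an explicit computation of the forest free energy, and finally convexity, which upgrades pointwise convergence of $f_{T}$ to convergence of the derivative at zero.

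\emph{Step 1 (Gaussian replacement).} First use Lemma~\ref{lem:replacement.estimates} to pass from $H^{u}=H+uH_{\alpha}$ to a Gaussian field with the same covariance structure, up to an error of order $\oo(t)$ in $\log\int_{I_{\theta}}e^{\beta H^{u}}$, with high probability and in $L^{1}$. This works for two reasons. The primes with $\log p$ below a slowly growing power of $t$ contribute only $\OO(\log t)$ uniformly, by the supremum estimate. The remaining independent prime blocks can be coupled to Gaussians, for instance by a Lindeberg or KMT-type coupling on blocks. Concentration of $f_{T}(u)$ around its mean follows from Gaussian concentration applied to the Lipschitz functional $\frac1t\log\int e^{\beta(\cdot)}$, whose Lipschitz constant is of order $\sqrt{V_T}/t\to0$. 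The same bounded-difference argument in the $U_{p}$ handles the Haar version. Together these give convergence in probability and in $L^{1}$ once the mean converges.

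\emph{Step 2 (forest comparison and free energy).} By Lemma~\ref{lem:forest.comparison}, the free energy of the Gaussian replacement is asymptotically equal to that of a Gaussian forest. This forest is a branching random walk, or a GREM-like field, with a common trunk of variance $q_{0}t/2$ (when $\theta<0$), followed by $(\log T)^{\theta\vee0}$ independent trees of depth $(1-q_{0})t$. The variance increments are multiplied by $(1+u)^{2}$ on scales below $\alpha$ and by $1$ above $\alpha$. Lemma~\ref{lem:forest.free.energy} then supplies the limit $f(u)$ as an explicit two-level GREM free energy. At $u=0$ this reduces to \eqref{eq:unperturbed.free.energy}. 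The trunk contributes only the $\beta^{2}q_{0}/4$ part, and this part is cancelled by normalization against $\log|I_{\theta}|/t=\theta$ in the supercritical case. Differentiability of $f_{\theta}$ at $\beta_{c}$ is a direct check: the two branches and their slopes $\beta_{c}(1-q_{0})/2$ agree at $\beta_{c}$, since $\beta_{c}^{2}(1-q_{0})/4-\theta=1$ in both sign cases of $\theta$. For small $|u|$, the first-level variance $(1+u)^{2}(\alpha-q_{0})$ is only mildly perturbed, so the GREM is in the same (non-frozen or jointly frozen) phase on both sides of $u=0$. The explicit formula is smooth there, and differentiation gives \eqref{eq:perturbation.derivative}. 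For example, in the frozen regime $f(u)=\beta\sqrt{(1+\theta\vee0-q_0)\cdot\text{(total variance)}}\,\cdots$, and its $u$-derivative at zero is proportional to the share $\alpha-q_{0}$ of the perturbed scales. The main obstacle is checking that no phase transition of the perturbed GREM occurs at $u=0$, including at $\beta=\beta_{c}$. The perturbation $u\in[-1/2,1/2]$ can change the ordering of the level slopes. I would handle this by computing one-sided derivatives separately from the GREM variational formula, and verifying that they agree because the constraint active at $u=0$ is the total-variance constraint, which is smooth in $u$.

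\emph{Step 3 (derivative convergence).} The map $u\mapsto f_{T}(u)$ is convex, being a log-Laplace functional that is linear in $u$ in the exponent. Hence $\EE_{\mathrm H}f_{T}$ is convex and converges pointwise to $f$. A standard convexity lemma (Griffiths' lemma) states that derivatives of convex functions converge wherever the limit is differentiable. Applying it gives $\EE_{\mathrm{H}}[f_{T}'(0)]\to f'(0)$. Here $f_T'(0)=\frac{\beta}{t}G^{\mathrm H}_{\beta,\sig,T}[H_\alpha]$ is integrable by the supremum bounds of Lemma~\ref{lem:replacement.estimates}, and differentiation under $\EE_{\mathrm H}$ is justified by dominated convergence.
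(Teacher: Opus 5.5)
Your overall route is the paper's: Gaussian replacement and concentration from Lemma~\ref{lem:replacement.estimates}, comparison with Gaussian forests, the forest variational formula, and convexity in $u$. Your Step~3 is essentially the paper's closing sandwich of difference quotients. Step~1 is fine as a citation, although that lemma is a prime-by-prime Lindeberg replacement in expectation with $\OO(1)$ error, not a coupling. Two steps, however, do not go through as written.

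First, Lemma~\ref{lem:forest.comparison} does not say that the free energy is asymptotically equal to that of one Gaussian forest. For each fixed $b$, it only sandwiches $\EE_{\mathrm{H}}[f_{T}(u)]$ between two forests. The lower forest is the retained one, of branching entropy $\rho_{b} = \log(b/2)/\log{b} < 1$; half the children are discarded so that retained leaves are separated and the covariance comparison holds in both directions. The upper forest is the full one, of entropy $1$. With Lemma~\ref{lem:forest.free.energy} this gives only $\mathcal{F}_{\rho_{b}}(u) \leq \liminf_{T}\EE_{\mathrm{H}}[f_{T}(u)] \leq \limsup_{T}\EE_{\mathrm{H}}[f_{T}(u)] \leq \mathcal{F}_{1}(u)$, where $\mathcal{F}_{\rho}(u)$ is the limit \eqref{eq:forest.variational}, and these bounds do not meet. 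You still need $\mathcal{F}_{\rho}(u) \to \mathcal{F}_{1}(u)$ as $\rho \uparrow 1$, taken after $T \to \infty$. The paper gets this from three facts: $\mathcal{D}_{\rho} \subseteq \mathcal{D}_{1}$; the map $(v_{1},v_{2}) \mapsto (\sqrt{\rho}v_{1},\sqrt{\rho}v_{2})$ sends $\mathcal{D}_{1}$ into $\mathcal{D}_{\rho}$; and the objective is continuous.

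Second, your justification of the formulas is incorrect, even though the formulas themselves are right. The common trunk does not contribute $\beta^{2}q_{0}/4$. It is a centered shift $Z$ shared by all leaves, so it adds $\beta Z/t \to 0$ to the quenched free energy. The value $\beta^{2}q_{0}/4$ is its annealed contribution, and keeping it would give $\theta + \beta^{2}/4$ rather than $\theta + \beta^{2}(1 - q_{0})/4$. The $-1$ in the low-temperature branch comes from the weight $e^{-t}$ per leaf.

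More importantly, it is false that the perturbed GREM stays in one phase across $u = 0$ with only the total-variance constraint active. Take $\theta \leq 0$ and $\beta > 2$. The maximizer at $u = 0$ is $v_{1} = v_{2} = 1$, where the first-stage constraint $Av_{1}^{2}/(1 + u)^{2} \leq A$ also binds.
\begin{itemize}
\item For $u > 0$, the optimum is the two-level point $(1 + u,1)$, and $f(u) = \theta + \beta(A(1 + u) + B) - (A + B)$.
\item For $u < 0$, only the total constraint binds, and $f(u) = \theta + \beta\sqrt{(A + B)(A(1 + u)^{2} + B)} - (A + B)$.
\end{itemize}
Both one-sided derivatives equal $\beta A$, but the second derivatives differ ($0$ versus $\beta AB/(A+B)$). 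So a phase change does occur at $u = 0$, and smoothness of a single active constraint cannot be the reason the derivatives agree.

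The paper's argument avoids this. Substituting $w_{1} = v_{1}/(1 + u)$ makes the feasible set independent of $u$. At $u = 0$, Cauchy--Schwarz shows that the strictly concave objective has the unique maximizer $w_{1} = w_{2} = \min\{\beta,\beta_{c}(\theta)\}/2$, after checking the first constraint. This evaluates \eqref{eq:unperturbed.free.energy}. The envelope (Danskin) theorem then gives the two-sided derivative $f'(0) = \beta A w_{1}$, which is \eqref{eq:perturbation.derivative}, regardless of which constraints bind.
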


\begin{lemma}[Truncated covariance geometry]\label{lem:covariance.input}
For every fixed $0 < \alpha \leq 1$, as $T \to \infty$, uniformly in $h,h' \in I_{\theta}$,
\begin{equation}\label{eq:truncated.overlap.covariance}
V_{T} = \frac{t}{2} + \OO(\log{t}),
\qquad
K_{\alpha}(h,h')
= \frac{t}{2}\min\{\alpha,\max\{R_{\sig,T}^{\mathrm{H}}(h,h'),0\}\}
 + \OO(\log{t}).
\end{equation}
In addition, $R_{\sig,T}^{\mathrm{H}}(h,h') = r_{T}(h,h') + \OO((\log{t}) / t)$ uniformly, where the deterministic logarithmic distance kernel is $r_{T}(h,h') = \max\{0,\min\{1,t^{-1}\log(1 / |h - h'|)\}\}$ for $h \neq h'$, with $r_{T}(h,h) = 1$. In particular, $R_{\sig,T}^{\mathrm{H}}(h,h') \geq q_{0} - \oo(1)$.
\end{lemma}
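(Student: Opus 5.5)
We prove Lemma~\ref{lem:covariance.input} by evaluating the explicit prime sums with the prime number theorem. The starting point is the computation already recorded in the introduction:
\[
K_{\alpha}(h,h') = \frac{1}{2}\sum_{p^{j} \leq \exp((\log X)^{\alpha})}\frac{\cos(j(h-h')\log p)}{j^{2}p^{2j\sig}}.
\]
It follows from $\EE_{\mathrm{H}}[U_{p}^{j}\overline{U_{q}^{k}}] = \ind_{\{p=q,\,j=k\}}$ and $\EE_{\mathrm{H}}[U_{p}^{j}U_{q}^{k}] = 0$ for $j,k \geq 1$. The terms with $j \geq 2$ are bounded by $\sum_{p}\sum_{j\geq 2}p^{-j} = \OO(1)$, since $\sig \geq 1/2$, so only the sum over primes matters. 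Write $\Delta = |h-h'|$ and $Y_{\alpha} = \exp((\log X)^{\alpha})$. The task is to show
\[
S_{\alpha}(\Delta) \leqdef \sum_{p \leq Y_{\alpha}}\frac{\cos(\Delta\log p)}{p^{1+2\eta_{\sig}}}
= \min\{\alpha t,\logp(1/\Delta)\} + \OO(\log t).
\]

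\textbf{Step 1 (damping).} First we remove $\eta_{\sig}$. Since $\eta_{\sig} \leq C_{\sig}t^{\Phi}/\log T$ and $\log Y_{\alpha} \leq \log X \leq \log T/t^{\phi}$, we get $\eta_{\sig}\log p \leq C_{\sig}t^{\Phi-\phi}$, which need not be small. So we split at $\log p = \min\{\Lambda_{T},(\log X)^{\alpha}\}$.

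For $\log p$ below this cutoff, $p^{-2\eta_{\sig}} = 1 + \OO(\eta_{\sig}\log p)$. By Mertens-type partial summation, $\sum_{p\leq y}(\log p)/p \ll \log y$, so the error is $\OO(\eta_{\sig}\Lambda_{T}) = \OO(1)$.

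Above the cutoff, partial summation against $\rd(\log\log u)$ gives
\[
\sum_{\log p \geq \Lambda_{T}}\frac{p^{-2\eta_{\sig}}}{p} \ll \int_{\Lambda_{T}}^{\infty}e^{-2\eta_{\sig}v}\frac{\rd v}{v} = \OO(1),
\]
because $\eta_{\sig}\Lambda_{T} \geq 1$ when $\Lambda_{T} = \eta_{\sig}^{-1}$. When instead $\Lambda_{T} = \log X$, this range is empty.

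Hence $S_{\alpha}$ equals the undamped sum over $\log p \leq \min\{\Lambda_{T},(\log X)^{\alpha}\}$, up to $\OO(1)$. By \eqref{eq:effective.cutoff.bounds}, $\log\log$ of this cutoff is $\alpha t + \OO(\log t)$, or $t + \OO(\log t)$ when $\alpha = 1$. The case $\alpha<1$ needs a little care, since $(\log X)^{\alpha}\le\Lambda_T$ eventually.

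\textbf{Step 2 (oscillatory sum).} Next we treat $\sum_{p\leq Y}\cos(\Delta\log p)/p$ with $\log\log Y = \alpha t + \OO(\log t)$. We use the prime number theorem with error $\pi(u) = \mathrm{li}(u) + \OO(u e^{-c\sqrt{\log u}})$ and partial summation. This reduces the sum to
\[
\int_{2}^{Y}\frac{\cos(\Delta\log u)}{u\log u}\,\rd u = \int_{\log 2}^{\log Y}\frac{\cos(\Delta v)}{v}\,\rd v,
\]
up to $\OO(1)$ uniformly in $\Delta$. The error term is integrable because the oscillating factor is bounded and the derivative of $\cos(\Delta\log u)/\log u$ contributes at most $\Delta/u$. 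Near $u \leq e^{1/\Delta}$ this is harmless. For larger $u$ we instead integrate by parts against the smooth main term only, which is safe.

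The integral equals $\log\min\{\log Y,1/\Delta\} + \OO(1)$ when $1/\Delta \geq \log 2$, by the standard estimate $\int_{a}^{b}\cos(\Delta v)v^{-1}\rd v = \OO(1)$ for $a \geq 1/\Delta$. When $\Delta$ is large, the integral is $\OO(1)$. Thus $S_{\alpha}(\Delta) = \min\{\alpha t,\logp(1/\Delta)\} + \OO(\log t)$. I expect the main obstacle to be this uniformity in $\Delta$ over the whole range $[0,2L_{T}]$, including very large $\Delta$ when $\theta>0$. There the PNT error must be handled without losing a factor $\Delta$. The approach is to apply partial summation only to the main term and bound the remainder crudely by $\sum 1/p$ over the range $\log p \leq \log(2+\Delta)$. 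This costs $\log\log(2+\Delta) \ll \log t$, which is acceptable.

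\textbf{Step 3 (conclusions).} With $\alpha = 1$ and $\Delta = 0$ we get $V_{T} = t/2 + \OO(\log t)$. Dividing $K_{1}$ by $V_{T}$ gives $R_{\sig,T}^{\mathrm{H}} = \min\{t,\logp(1/\Delta)\}/t + \OO((\log t)/t) = r_{T} + \OO((\log t)/t)$. Then
\[
\min\{\alpha t,\logp(1/\Delta)\} = t\min\{\alpha,r_{T}\} = t\min\{\alpha,\max\{R,0\}\} + \OO(\log t),
\]
which yields \eqref{eq:truncated.overlap.covariance}. Finally, for $\Delta \leq 2L_{T} = 2(\log T)^{\theta}$ with $\theta<0$,
\[
r_{T} \geq t^{-1}\log(1/(2(\log T)^{\theta})) = -\theta - \oo(1) = q_{0} - \oo(1).
\]
For $\theta \geq 0$, the bound $r_T\ge0=q_0$ is trivial. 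This gives $R_{\sig,T}^{\mathrm{H}} \geq q_{0} - \oo(1)$.
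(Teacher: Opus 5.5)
Your Steps 1 and 3 are sound. Step 1 is a reasonable reorganization of the paper's argument: you strip the damping factor $p^{-2\eta_{\sig}}$ first, at cost $\OO(\eta_{\sig}\Lambda_{T}) + \OO(1)$. The paper instead keeps $e^{-2\eta_{\sig}v}$ inside the main integral and absorbs it through $M = \min\{v_{1},\eta_{\sig}^{-1},|d|^{-1}\}$.

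The gap is in Step 2, exactly at the point you flag. It matters only for $\theta > 0$, where $\Delta$ ranges up to $2L_{T} = 2e^{\theta t}$. Write $\pi = \operatorname{Li} + E$ and $g(u) = \cos(\Delta\log u)/u$, and suppose you discard the primes with $\log p \leq v_{0}$. Partial summation on the rest leaves the remainder $\int |E(u)||g'(u)|\,\rd u$. Since $|g'(u)| \leq (1 + \Delta)/u^{2}$, the classical error term only gives
\[
\int_{e^{v_{0}}}^{Y}|E(u)||g'(u)|\,\rd u \ll (1 + \Delta)\int_{v_{0}}^{\infty}e^{-c\sqrt{v}}\,\rd v \asymp (1 + \Delta)(1 + \sqrt{v_{0}})e^{-c\sqrt{v_{0}}}.
\]
With your threshold $v_{0} = \log(2 + \Delta)$, this bound is of size $\Delta^{1 - \oo(1)}$. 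For $\Delta \asymp e^{\theta t}$ that is far larger than $\OO(\log t)$, indeed larger than $t$ itself. Your earlier remarks in Step 2 (harmlessness near $u \leq e^{1/\Delta}$, integrating by parts ``against the smooth main term only'') never touch the error integral $\int g\,\rd E$. So the claimed reduction to $\int_{\log 2}^{\log Y}\cos(\Delta v)v^{-1}\,\rd v$ up to $\OO(1)$ uniformly in $\Delta$ is not justified.

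The missing point is quantitative: $c\sqrt{v_{0}}$ must exceed $\log(1 + \Delta)$ by an amount tending to infinity. That means $v_{0}$ of order $(\log(2 + \Delta))^{2}$, not $\log(2 + \Delta)$. The paper takes $v_{0} = Kt^{2}$ with $c\sqrt{K} > \theta \vee 0$, which makes the remainder $\oo(1)$ uniformly over $I_{\theta}$. Mertens' estimate still bounds the discarded primes by $\log v_{0} + \OO(1) = \OO(\log t)$, so nothing is lost. The main integral then starts at $v_{0}$ and equals $\logp(\min\{\log Y,\Delta^{-1}\}/v_{0}) + \OO(1)$. This differs from $\min\{\alpha t,\logp(1/\Delta)\}$ by only $\OO(\log t)$, because $\log v_{0} = \OO(\log t)$. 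With this change of cutoff, the rest of your argument goes through, including the damping removal and Step 3.
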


\begin{lemma}[Approximate integration by parts]\label{lem:haar.ibp}
For every $0 < \alpha \leq 1$, fixed $n \geq 1$, $1 \leq k \leq n$ and deterministic bounded measurable $\Psi_{T}:I_{\theta}^{n} \to \R$, we have, as $T \to \infty$,
\begin{equation}\label{eq:haar.ibp}
\begin{aligned}
&\EE_{\mathrm{H}} (G_{\beta,\sig,T}^{\mathrm{H}})^{\times n}[H_{\alpha}(h_{k})\Psi_{T}(\bb{h})] \\
&\qquad= \beta \sum_{j = 1}^{n}\EE_{\mathrm{H}} (G_{\beta,\sig,T}^{\mathrm{H}})^{\times n}[K_{\alpha}(h_{k},h_{j})\Psi_{T}(\bb{h})] \\
&\qquad\qquad - n\beta \EE_{\mathrm{H}} (G_{\beta,\sig,T}^{\mathrm{H}})^{\times(n+1)}[K_{\alpha}(h_{k},h_{n+1})\Psi_{T}(\bb{h})]
 + \OO(n^{2}\beta^{2}\|\Psi_{T}\|_{\infty}).
\end{aligned}
\end{equation}
The constant is independent of $T$ and $\Psi_{T}$.
\end{lemma}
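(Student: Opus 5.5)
The plan is to run a Gaussian-free, one-prime-at-a-time integration by parts. The key structural fact is that $H_{\alpha}$ and $H$ decompose into independent, mean-zero contributions indexed by primes. For each prime $p \leq X$, write
\[
X_{p}(h) \leqdef \Re\sum_{j : p^{j} \leq X}\frac{U_{p}^{j}}{j p^{j(\sig + \ii h)}}, \qquad Y_{p}(h) \leqdef \Re\sum_{j : p^{j} \leq \exp((\log X)^{\alpha})}\frac{U_{p}^{j}}{j p^{j(\sig + \ii h)}},
\]
so that $H = \sum_{p} X_{p}$ and $H_{\alpha} = \sum_{p} Y_{p}$. Both $X_{p}$ and $Y_{p}$ are functions of $U_{p}$ alone, and $\EE_{\mathrm{H}}[Y_{p}(h)] = 0$ by rotational invariance. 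Distinct primes are independent, so $K_{\alpha}(h,h') = \sum_{p}\EE_{\mathrm{H}}[Y_{p}(h)X_{p}(h')]$ exactly. We also have the deterministic bound $\sup_{h}(|X_{p}(h)| + |Y_{p}(h)|) \leq 2\sum_{j \geq 1}p^{-j/2}/j \leq C p^{-1/2}$, using $\sig \geq 1/2$. The left-hand side of \eqref{eq:haar.ibp} is $\sum_{p}\EE_{\mathrm{H}}[Y_{p}(h_{k})\,\cdot\,]$ under the Gibbs average, and we treat each prime separately.

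Fix $p$ and interpolate: let $H^{(p,s)} \leqdef H - (1 - s)X_{p}$ for $s \in [0,1]$. Let $\langle \cdot \rangle_{s}$ denote the $n$- or $(n+1)$-fold product Gibbs average built from $H^{(p,s)}$. Thus $\langle\cdot\rangle_{1}$ is the true Gibbs average, and $\langle\cdot\rangle_{0}$ is independent of $U_{p}$. For a bounded (possibly $U_{p}$-dependent) integrand $\Phi$, differentiating in $s$ gives the standard replica formula
\[
\frac{\rd}{\rd s}\langle \Phi(\bb{h})\rangle_{s} = \beta\sum_{j=1}^{n}\langle X_{p}(h_{j})\Phi(\bb{h})\rangle_{s} - n\beta\langle X_{p}(h_{n+1})\Phi(\bb{h})\rangle_{s}.
\]
This derivative is bounded by $2n\beta\,\sup|X_{p}|\,\|\Phi\|_{\infty}$. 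Differentiating once more gives a second derivative bounded by $C n^{2}\beta^{2}\sup|X_{p}|^{2}\|\Phi\|_{\infty}$.

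Apply Taylor's formula to second order in $s$ with $\Phi = Y_{p}(h_{k})\Psi_{T}$. The zeroth-order term is $\EE_{\mathrm{H}}[\langle Y_{p}(h_{k})\Psi_{T}\rangle_{0}]$. It vanishes: conditional on $(U_{q})_{q \neq p}$, the measure $\langle\cdot\rangle_{0}$ is fixed and deterministic in $U_{p}$, so Fubini lets us take $\EE[Y_{p}(h_{k})] = 0$ inside. For the first-order term at $s = 0$, the same conditioning lets us replace $Y_{p}(h_{k})X_{p}(h_{j})$ by its expectation $\EE_{\mathrm{H}}[Y_{p}(h_{k})X_{p}(h_{j})]$. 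This yields
\[
\beta\sum_{j}\EE_{\mathrm{H}}\langle \EE[Y_{p}(h_{k})X_{p}(h_{j})]\Psi_{T}\rangle_{0} - n\beta\,\EE_{\mathrm{H}}\langle \EE[Y_{p}(h_{k})X_{p}(h_{n+1})]\Psi_{T}\rangle_{0}.
\]
We then move back from $\langle\cdot\rangle_{0}$ to $\langle\cdot\rangle_{1}$ using the first-derivative bound again. Each such replacement costs at most $n\beta\cdot|\EE[Y_{p}X_{p}]|\cdot 2n\beta\sup|X_{p}|\,\|\Psi_{T}\|_{\infty} \ll n^{2}\beta^{2}p^{-3/2}\|\Psi_{T}\|_{\infty}$. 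The Taylor remainder is bounded by $\sup|Y_{p}|\cdot Cn^{2}\beta^{2}\sup|X_{p}|^{2}\|\Psi_{T}\|_{\infty} \ll n^{2}\beta^{2}p^{-3/2}\|\Psi_{T}\|_{\infty}$.

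Summing over primes, the main terms reassemble exactly into $\beta\sum_{j}\EE_{\mathrm{H}}(G^{\mathrm{H}}_{\beta,\sig,T})^{\times n}[K_{\alpha}(h_{k},h_{j})\Psi_{T}]$ minus the $(n+1)$-replica term. The errors sum to $\OO(n^{2}\beta^{2}\|\Psi_{T}\|_{\infty}\sum_{p}p^{-3/2}) = \OO(n^{2}\beta^{2}\|\Psi_{T}\|_{\infty})$, with a constant independent of $T$, $\sig$, $X$ and $\Psi_{T}$.

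The main obstacle is bookkeeping rather than depth. First, the derivative bounds must be uniform even though the Gibbs measures are random ratios. This works because the Gibbs weights are positive and normalized, so every derivative is a difference of averages of bounded quantities. Second, no error term may be linear in $p^{-1}$, which would sum to $t$. This is why the first-order term is taken at $s = 0$, where independence kills the fluctuation of $Y_{p}X_{p}$, rather than at $s = 1$. Fubini over $U_{p}$ and the separately measurable dependence on $\bb{h}$ are routine because all integrands are bounded.
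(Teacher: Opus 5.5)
Your proposal is correct and is essentially the paper's own cavity argument. The matching steps are:
\begin{itemize}
\item the interpolation $H - (1-s)X_{p} = H^{(-p)} + sB_{p}$;
\item the vanishing zeroth-order term, and the exact covariance $K_{p,\alpha}$ in the first-order term at $s = 0$, both from independence of $G_{0}$ from $U_{p}$;
\item the transfer from $G_{0}$ back to $G_{1}$ at cost $\OO(n^{2}\beta^{2}b_{p}^{3}\|\Psi_{T}\|_{\infty})$;
\item the cubic Taylor remainder;
\item summation over primes via $\sum_{p}b_{p}^{3} = \OO(1)$.
\end{itemize}
(When tallying the transfer costs, use coefficient $\beta$ for the $n$ terms with $j \leq n$ and $n\beta$ only for the $(n+1)$-replica term, as the paper does, so the total per prime is $\OO(n^{2})$ rather than $\OO(n^{3})$; since $n$ is fixed, this is cosmetic.)
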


\begin{lemma}[Energy concentration]\label{lem:thermal.input}
For every fixed $\beta > 0$, as $T \to \infty$,
\begin{equation}\label{eq:thermal.concentration}
\EE_{\mathrm{H}} G_{\beta,\sig,T}^{\mathrm{H}}
 \left[\left|\frac{H(h)}{t} - f_{\theta}'(\beta)\right|\right]
\to 0.
\end{equation}
\end{lemma}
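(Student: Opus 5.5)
We prove Lemma~\ref{lem:thermal.input} with the standard convexity argument for the free energy in $\beta$, combined with concentration of the log-partition function. Write $F_{T}(\beta) \leqdef t^{-1}\log Z_{\beta,\sig,T}^{\mathrm{H}}$. This function is convex in $\beta$, and
\[
F_{T}'(\beta) = G_{\beta,\sig,T}^{\mathrm{H}}[H(h)/t], \qquad F_{T}''(\beta) = t\,\Var_{G}(H(h)/t) \geq 0 .
\]
By Lemma~\ref{lem:free.energy.input} at $u = 0$, applied for every $\beta > 0$, we have $F_{T}(\beta) \to f_{\theta}(\beta)$ in probability and in $L^{1}$. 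Moreover, $f_{\theta}$ is differentiable at every $\beta > 0$.

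\textbf{Step 1 (Gibbs mean).} Convexity gives, for each $\delta > 0$,
\[
\frac{F_{T}(\beta) - F_{T}(\beta - \delta)}{\delta} \leq F_{T}'(\beta) \leq \frac{F_{T}(\beta + \delta) - F_{T}(\beta)}{\delta}.
\]
Take expectations of the positive and negative parts of $F_{T}'(\beta) - f_{\theta}'(\beta)$ and use the $L^{1}$ convergence at the three points $\beta$ and $\beta \pm \delta$. This yields
\[
\limsup_{T} \EE_{\mathrm{H}}\bigl|F_{T}'(\beta) - f_{\theta}'(\beta)\bigr| \leq \max\Bigl\{\Bigl|\tfrac{f_{\theta}(\beta + \delta) - f_{\theta}(\beta)}{\delta} - f_{\theta}'(\beta)\Bigr|,\Bigl|\tfrac{f_{\theta}(\beta) - f_{\theta}(\beta - \delta)}{\delta} - f_{\theta}'(\beta)\Bigr|\Bigr\}.
\]
Since $f_{\theta}$ is differentiable at $\beta$, the right-hand side tends to $0$ as $\delta \downarrow 0$. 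Hence $\EE_{\mathrm{H}}|G[H/t] - f_{\theta}'(\beta)| \to 0$.

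\textbf{Step 2 (thermal fluctuations).} It remains to show that $\EE_{\mathrm{H}} G[|H(h)/t - G[H/t]|] \to 0$. By Cauchy--Schwarz it suffices to show $\EE_{\mathrm{H}}\Var_{G}(H/t) \to 0$, that is, $\EE_{\mathrm{H}} F_{T}''(\beta) = \oo(t)$. Integrating $F_{T}''$ over $[\beta - \delta,\beta + \delta]$ gives
\[
\int_{\beta - \delta}^{\beta + \delta}\EE_{\mathrm{H}}\Var_{G_{b}}(H/t)\,\rd{b} = t^{-1}\,\EE_{\mathrm{H}}\bigl[F_{T}'(\beta + \delta) - F_{T}'(\beta - \delta)\bigr].
\]
The derivatives $F_{T}'$ are bounded in $L^{1}$, by Step 1 or by the bound $\EE_{\mathrm{H}}\sup_{h}|H(h)|/t = \OO(1)$ from Lemma~\ref{lem:replacement.estimates}. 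So the right-hand side is $\OO(1/t)$, and the variance vanishes on average over a window of $b$-values. To pass from the average to the fixed $\beta$, use monotonicity. The map $b \mapsto G_{b}[|H/t - f_{\theta}'(b)|]$ is not monotone, but the following one-sided route works. For $b \in [\beta, \beta + \delta]$, Chebyshev and the Gibbs change of measure give
\[
G_{\beta}\bigl[H/t > f_{\theta}'(\beta) + \e\bigr] \leq e^{-(b - \beta)t(f_{\theta}'(\beta) + \e)}\, Z_{b}/Z_{\beta}.
\]
Now $t^{-1}\log(Z_{b}/Z_{\beta}) = F_{T}(b) - F_{T}(\beta) \to f_{\theta}(b) - f_{\theta}(\beta) = (b - \beta)f_{\theta}'(\beta) + \oo(b - \beta)$. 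Choosing $b - \beta$ small relative to $\e$, the bound becomes $e^{-c\e(b - \beta)t}$ with probability tending to one. The lower deviation is handled symmetrically with $b < \beta$. Together with the bound $G[|H|/t] \leq \sup_{h}|H(h)|/t$, whose expectation is bounded, these Chernoff bounds give uniform integrability and hence the $L^{1}$ statement~\eqref{eq:thermal.concentration}. This argument in fact makes the variance computation of the first part of Step 2 unnecessary.

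\textbf{Main obstacle.} The delicate point is turning the in-probability estimate on $F_{T}(b) - F_{T}(\beta)$ into control of the Gibbs tail bound in expectation. This requires the convergence from Lemma~\ref{lem:free.energy.input} at the finitely many nearby values $b$, together with the supremum bound from Lemma~\ref{lem:replacement.estimates} to handle the exceptional event of small probability on which $Z_{b}/Z_{\beta}$ is large.
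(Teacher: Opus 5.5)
Your working argument is the paper's proof: an exponential Markov bound within $G_{\beta,\sig,T}^{\mathrm{H}}$ at nearby inverse temperatures $\beta \pm z$, controlled by the free energy convergence of Lemma~\ref{lem:free.energy.input} and the differentiability of $f_{\theta}$, then upgraded to $L^{1}$ using Lemma~\ref{lem:replacement.estimates}; your Step~1 and the variance computation are correct but unnecessary. One fix is needed: uniform integrability on the exceptional event requires the second-moment bound $\EE_{\mathrm{H}}[\sup_{h \in I_{\theta}}|H(h)|^{2}] \leq Ct^{2}$ from \eqref{eq:field.supremum}, as the paper uses, because boundedness of $\EE_{\mathrm{H}}[\sup_{h \in I_{\theta}}|H(h)|]/t$ alone does not give uniform integrability.
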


\begin{proof}[Proof of Proposition~\ref{prop:haar.laws} from the four lemmas]
Differentiating \eqref{eq:perturbed.free.energy.definition} at $u = 0$ gives $f_{T}'(0) = \frac{\beta}{t}G_{\beta,\sig,T}^{\mathrm{H}}[H_{\alpha}(h)]$. Taking $n = 1$ and $\Psi_{T} \equiv 1$ in Lemma~\ref{lem:haar.ibp} therefore yields
\begin{equation}\label{eq:overlap.integrated.cdf}
\begin{aligned}
\frac{2}{\beta^{2}}\EE_{\mathrm{H}}[f_{T}'(0)]
&= \frac{2}{t}\EE_{\mathrm{H}} (G_{\beta,\sig,T}^{\mathrm{H}})^{\times 2}\left[K_{\alpha}(h_{1},h_{1}) - K_{\alpha}(h_{1},h_{2})\right] + \oo(1) \\
&= \EE_{\mathrm{H}} (G_{\beta,\sig,T}^{\mathrm{H}})^{\times 2}\left[\alpha - \min\{\alpha,\max\{R_{\sig,T}^{\mathrm{H}}(h_{1},h_{2}),0\}\}\right] + \oo(1) \\
&= \int_{0}^{\alpha}\EE_{\mathrm{H}} (G_{\beta,\sig,T}^{\mathrm{H}})^{\times 2} [\ind_{\{R_{\sig,T}^{\mathrm{H}}(h_{1},h_{2}) \leq y\}}] \rd{y} + \oo(1).
\end{aligned}
\end{equation}
The second equality uses Lemma~\ref{lem:covariance.input} and the diagonal identity $R_{\sig,T}^{\mathrm{H}}(h_{1},h_{1}) = 1$. Its uniform covariance errors contribute $\OO((\log{t}) / t)$ after division by $t$.

Let $\mu_{T}$ be the law of $R_{\sig,T}^{\mathrm{H}}(h_{1},h_{2})$ under the averaged Gibbs measure $\EE_{\mathrm{H}}(G_{\beta,\sig,T}^{\mathrm{H}})^{\times 2}$. It is supported in the compact interval $[ - 1,1]$, and the family $(\mu_{T})_{T}$ is tight. Consider any subsequence along which $\mu_{T}$ converges weakly to a probability measure $\mu$. Lemma~\ref{lem:covariance.input} gives a deterministic sequence $\e_{T} \to 0$ such that $R_{\sig,T}^{\mathrm{H}}(h,h') \geq q_{0} - \e_{T}$ uniformly in $h,h' \in I_{\theta}$. Consequently, for every $\e > 0$, $\mu_{T}(( - \infty,q_{0} - \e)) = 0$ for all sufficiently large $T$. The portmanteau theorem gives $\mu(( - \infty,q_{0} - \e)) = 0$. Letting $\e \downarrow 0$ shows that this subsequential limiting law $\mu$ is supported in $[q_{0},1]$.

For each fixed $q_{0} < \alpha < 1$, we take the limit in \eqref{eq:overlap.integrated.cdf}. Lemma~\ref{lem:free.energy.input} gives $\EE_{\mathrm{H}}[f_{T}'(0)] \to f'(0)$, with $f'(0)$ specified in \eqref{eq:perturbation.derivative}. Using the support of $\mu$ yields
\[
\int_{q_{0}}^{\alpha}\mu([q_{0},y])\rd{y}
= \int_{q_{0}}^{1}(\alpha - r)_{+}\mu(\rd{r})
= \frac{2f'(0)}{\beta^{2}}
= \min\left\{1, \frac{\beta_{c}(\theta)}{\beta}\right\}(\alpha - q_{0}).
\]
Taking right difference quotients and using right-continuity of the cumulative distribution function gives
\[
\mu([q_{0},\alpha]) = \min\{1,\beta_{c}(\theta) / \beta\}, \qquad q_{0} < \alpha < 1.
\]
Letting $\alpha \downarrow q_{0}$ identifies the mass at $q_{0}$, while letting $\alpha \uparrow 1$ identifies the mass below $1$. Hence
\[
\mu = \min\{1,\beta_{c}(\theta) / \beta\}\delta_{q_{0}} + \left(1 - \min\{1,\beta_{c}(\theta) / \beta\}\right)\delta_{1}.
\]
Every subsequential limiting law is therefore the same. Tightness then gives convergence of the full family $(\mu_{T})_{T}$, proving \eqref{eq:haar.two.overlap}.

The energy concentration estimate of Lemma~\ref{lem:thermal.input} also applies after multiplication by a bounded function of any fixed number of sampled positions. Indeed, the triangle inequality, the bound $|\Psi_{T}| \leq \|\Psi_{T}\|_{\infty}$, and the fact that the $k$th marginal of $(G_{\beta,\sig,T}^{\mathrm{H}})^{\times n}$ is $G_{\beta,\sig,T}^{\mathrm{H}}$ give
\[
\left|\EE_{\mathrm{H}} (G_{\beta,\sig,T}^{\mathrm{H}})^{\times n} \left[\left(\frac{H(h_{k})}{t} - f_{\theta}'(\beta)\right)\Psi_{T}(\bb{h})\right]\right|
\leq \|\Psi_{T}\|_{\infty}\EE_{\mathrm{H}} G_{\beta,\sig,T}^{\mathrm{H}} \left[\left|\frac{H(h)}{t} - f_{\theta}'(\beta)\right|\right].
\]
The right side tends to zero uniformly over $\|\Psi_{T}\|_{\infty} \leq 1$. Thus
\begin{equation}\label{eq:positional.energy}
\frac{1}{t}\EE_{\mathrm{H}} (G_{\beta,\sig,T}^{\mathrm{H}})^{\times n}[H(h_{k})\Psi_{T}(\bb{h})]
- f_{\theta}'(\beta)\EE_{\mathrm{H}} (G_{\beta,\sig,T}^{\mathrm{H}})^{\times n}[\Psi_{T}(\bb{h})] \to 0
\end{equation}
uniformly over that class of functions.

Apply Lemma~\ref{lem:haar.ibp} with $\alpha = 1$. Since $K_{1} = V_{T}R_{\sig,T}^{\mathrm{H}}$ and $V_{T}/t \to 1/2$ by Lemma~\ref{lem:covariance.input}, division by $\beta V_{T}$ and \eqref{eq:positional.energy} give
\begin{equation}\label{eq:sub.in.1}
\begin{aligned}
\frac{2f_{\theta}'(\beta)}{\beta}\EE_{\mathrm{H}} (G_{\beta,\sig,T}^{\mathrm{H}})^{\times n}[\Psi_{T}(\bb{h})]
&= \sum_{j = 1}^{n}\EE_{\mathrm{H}} (G_{\beta,\sig,T}^{\mathrm{H}})^{\times n}[R_{\sig,T}^{\mathrm{H}}(h_{k},h_{j})\Psi_{T}(\bb{h})] \\
&\qquad - n\EE_{\mathrm{H}} (G_{\beta,\sig,T}^{\mathrm{H}})^{\times(n+1)}[R_{\sig,T}^{\mathrm{H}}(h_{k},h_{n+1})\Psi_{T}(\bb{h})] + \oo(1).
\end{aligned}
\end{equation}
For fixed $n$ and $\beta$, the error remains uniform over $\|\Psi_{T}\|_{\infty} \leq 1$, because the remainder in Lemma~\ref{lem:haar.ibp}, divided by $\beta V_{T}$, tends to zero uniformly over this class. Taking $n = 1$ and $\Psi_{T} \equiv 1$ gives
\begin{equation}\label{eq:sub.in.2}
\frac{2f_{\theta}'(\beta)}{\beta}
= 1 - \EE_{\mathrm{H}} (G_{\beta,\sig,T}^{\mathrm{H}})^{\times 2}[R_{\sig,T}^{\mathrm{H}}(h_{1},h_{2})] + \oo(1).
\end{equation}
In the general identity, the summand with $j = k$ equals $\EE_{\mathrm{H}}(G_{\beta,\sig,T}^{\mathrm{H}})^{\times n}[\Psi_{T}(\bb{h})]$, since the diagonal overlap is $1$. Substituting \eqref{eq:sub.in.2} in \eqref{eq:sub.in.1} cancels this diagonal term, and reorganizing the terms yields
\[
\begin{aligned}
n\EE_{\mathrm{H}} (G_{\beta,\sig,T}^{\mathrm{H}})^{\times(n+1)}[R_{\sig,T}^{\mathrm{H}}(h_{k},h_{n+1})\Psi_{T}(\bb{h})]
&= \EE_{\mathrm{H}} (G_{\beta,\sig,T}^{\mathrm{H}})^{\times 2}[R_{\sig,T}^{\mathrm{H}}(h_{1},h_{2})]\EE_{\mathrm{H}} (G_{\beta,\sig,T}^{\mathrm{H}})^{\times n}[\Psi_{T}(\bb{h})] \\
&\quad+ \sum_{\substack{1 \leq j \leq n \\j \neq k}}\EE_{\mathrm{H}} (G_{\beta,\sig,T}^{\mathrm{H}})^{\times n}[R_{\sig,T}^{\mathrm{H}}(h_{k},h_{j})\Psi_{T}(\bb{h})] + \oo(1).
\end{aligned}
\]
Division by $n$ proves \eqref{eq:haar.gg}. When $n = 1$, the sum is empty, so the same argument applies. No continuity assumption on $\Psi_{T}$ has been used.

It remains to identify the limiting joint distribution of the overlaps. For each $T$, sample countably many positions conditionally independently from $G_{\beta,\sig,T}^{\mathrm{H}}$. Their overlap arrays take values in the compact countable product $[ - 1,1]^{\N\times\N}$. From any sequence of $T$ tending to infinity, successive subsequence extraction for the finite arrays therefore gives a consistent limiting array $(Q_{ij})_{i,j\geq 1}$. Its law is exchangeable, meaning invariant under simultaneous finite permutations of its row and column indices, because the sampled positions are conditionally independent with the same distribution. Each finite overlap matrix is a correlation matrix, so it is positive semidefinite and has diagonal $1$. These properties pass to the limit because the corresponding sets of matrices are closed.

By \eqref{eq:haar.two.overlap}, every off-diagonal entry of the limiting array belongs to $\{q_{0},1\}$ almost surely. This holds simultaneously for all pairs because there are countably many pairs. The entries equal to $1$ define an equivalence relation on the sampled indices. Reflexivity and symmetry are immediate. For transitivity, represent any three-by-three submatrix as the Gram matrix of unit vectors $\bb{v}_{i},\bb{v}_{j},\bb{v}_{k}$, meaning that its entries are their pairwise inner products. Then $\|\bb{v}_{i} - \bb{v}_{j}\|_{2}^{2} = 2(1 - Q_{ij})$, so $Q_{ij} = Q_{jk} = 1$ implies $\bb{v}_{i} = \bb{v}_{j} = \bb{v}_{k}$ and hence $Q_{ik} = 1$. Let $\Pi_{n}$ denote the resulting random partition of $\{1,\ldots,n\}$.

When $\beta \leq \beta_{c}(\theta)$, \eqref{eq:haar.two.overlap} implies that every off-diagonal entry equals $q_{0}$ almost surely. Thus every finite limiting array equals $(1 - q_{0})\mathrm{Id}_{n} + q_{0}\bb{1}\bb{1}^{\mathsf{T}}$, which proves the first case of \eqref{eq:haar.joint.array} by uniqueness of the subsequential limiting law.

Suppose now that $\beta > \beta_{c}(\theta)$, and set $\lambda = \beta_{c}(\theta) / \beta \in (0,1)$. The argument below reduces identification of the limiting overlap array to the classical Poisson--Dirichlet sampling formula, using the GG identities in the case of two-valued overlaps; see \citet[Proposition~15.2.4]{MR3024566} and \citet[Section~3, proof of Theorem~4]{Panchenko2011Representation}. Since $\theta > - 1$, $q_{0} < 1$. For a specified partition $\pi$ of $\{1,\ldots,n\}$, write $i \sim_{\pi} j$ when $i$ and $j$ belong to the same block, and define the polynomial
\[
P_{\pi}((r_{ij})_{i,j=1}^{n})
= \prod_{\substack{1 \leq i < j \leq n \\i \sim_{\pi} j}}\frac{r_{ij} - q_{0}}{1 - q_{0}}
\prod_{\substack{1 \leq i < j \leq n \\i \not\sim_{\pi} j}}\frac{1 - r_{ij}}{1 - q_{0}}.
\]
Empty products are interpreted as $1$. If $Q^{(n)} = (Q_{ij})_{i,j=1}^{n}$, the two-valued nature of the limiting overlaps gives
\[
P_{\pi}(Q^{(n)}) = \ind_{\{\Pi_{n} = \pi\}}.
\]
At finite $T$, the same polynomial evaluated at $(R_{\sig,T}^{\mathrm{H}}(h_{i},h_{j}))_{i,j=1}^{n}$ is a deterministic measurable function of the sampled positions. It is bounded uniformly in $T$, because the polynomial is bounded on the compact cube $[ - 1,1]^{n\times n}$. After division by a fixed bound, it is therefore an admissible function in \eqref{eq:haar.gg}. Polynomial continuity and convergence of the finite arrays permit passage to the limit, giving
\[
\EE[Q_{k,n+1}\ind_{\{\Pi_{n} = \pi\}}]
= \frac{1}{n}\EE[Q_{12}]\PP(\Pi_{n} = \pi) + \frac{1}{n}\sum_{\substack{1 \leq j \leq n \\j \neq k}}\EE[Q_{kj}\ind_{\{\Pi_{n} = \pi\}}].
\]

Fix a partition $\pi$ with positive probability and a block $B$ of $\pi$, and choose $k \in B$. On $\{\Pi_{n} = \pi\}$, exactly $|B| - 1$ terms in the sum have overlap $1$, and the other $n - |B|$ terms have overlap $q_{0}$. Moreover, \eqref{eq:haar.two.overlap} gives
\[
\EE[Q_{12}] = q_{0} + (1 - q_{0})(1 - \lambda).
\]
Dividing the limiting GG identity by $\PP(\Pi_{n} = \pi)$ therefore yields
\[
\EE[Q_{k,n+1} \mid \Pi_{n} = \pi]
= \frac{\EE[Q_{12}] + |B| - 1 + (n - |B|)q_{0}}{n}
= q_{0} + (1 - q_{0})\frac{|B| - \lambda}{n}.
\]
The overlap $Q_{k,n+1}$ equals $1$ precisely when sampled index $n + 1$ joins $B$, and otherwise equals $q_{0}$. Consequently,
\[
\PP(n + 1\text{ joins }B \mid \Pi_{n} = \pi) = \frac{|B| - \lambda}{n}.
\]
The events of joining the existing blocks are disjoint. Their complement is the event of starting a new block, whose conditional probability is
\[
1 - \sum_{B\in\pi}\frac{|B| - \lambda}{n} = \frac{\lambda|\pi|}{n}.
\]
Starting from the single block $\{1\}$, these rules uniquely determine the distribution of every finite partition.

For a specified partition with $k$ blocks of sizes $n_{1},\ldots,n_{k}$, reveal the sampled indices in increasing order. The denominators in the successive conditional probabilities contribute $\prod_{m=1}^{n-1}m = \Gamma(n)$. The creation of the second through the $k$th blocks contributes $\lambda^{k-1}\Gamma(k)$, while growth within block $i$ contributes
\[
\prod_{m=1}^{n_{i}-1}(m - \lambda) = \frac{\Gamma(n_{i} - \lambda)}{\Gamma(1 - \lambda)}.
\]
Thus the probability of this specified partition is
\begin{equation}\label{eq:haar.eppf}
p(n_{1},\ldots,n_{k})
= \frac{\lambda^{k-1}\Gamma(k)}{\Gamma(n)}
 \prod_{j = 1}^{k}\frac{\Gamma(n_{j} - \lambda)}{\Gamma(1 - \lambda)},
\qquad n = n_{1} + \cdots + n_{k}.
\end{equation}
There is no additional combinatorial factor because the partition itself, rather than only its block sizes, is specified.

By \citet[Proposition~51, Eq.~(178), p.~897]{MR1434129}, \eqref{eq:haar.eppf} is the sampling formula for Poisson--Dirichlet weights with parameter $\lambda$, denoted by $\mathrm{PD}(\lambda,0)$ in that reference. The limiting overlap array is determined by its partition: its entries equal $1$ within a block and $q_{0}$ between different blocks. Hence every subsequential limiting array has the law obtained by sampling conditionally independent group labels from these weights and assigning the two overlap values. Uniqueness of this limiting law, together with tightness, proves \eqref{eq:haar.joint.array} and completes the proof.
\end{proof}

\begin{remark}[Approximate ultrametricity of the Haar field]
Lemma~\ref{lem:covariance.input} also yields uniform approximate ultrametricity. Indeed, $|h_{1} - h_{2}| \leq 2\max\{|h_{1} - h_{3}|,|h_{2} - h_{3}|\}$ implies
\[
r_{T}(h_{1},h_{2})
\geq \min\{r_{T}(h_{1},h_{3}),r_{T}(h_{2},h_{3})\} - \frac{\log{2}}{t}.
\]
Since $R_{\sig,T}^{\mathrm{H}} = r_{T} + \OO((\log{t}) / t)$ uniformly, there exists $C > 0$ such that, for all sufficiently large $T$ and every $h_{1},h_{2},h_{3} \in I_{\theta}$,
\[
R_{\sig,T}^{\mathrm{H}}(h_{1},h_{2})
\geq \min\{R_{\sig,T}^{\mathrm{H}}(h_{1},h_{3}),R_{\sig,T}^{\mathrm{H}}(h_{2},h_{3})\} - C\frac{\log{t}}{t}.
\]
\end{remark}

\subsection{Prime number theorem estimates}\label{subsec:PNT}

The prime number theorem with its classical error term, given by \citet[Theorem~6.9, Eq.~(6.14), p.~179]{MR2378655}, states that
\begin{equation}\label{analytic:eq:PNT}
\pi(y) = \operatorname{Li}(y) + \OO\big(y e^{-c\sqrt{\log{y}}}\big),
\end{equation}
where $\pi(y)$ counts primes at most $y$, $\operatorname{Li}(y) = \int_{2}^{y}\rd{u}/\log{u}$ is the logarithmic integral, and $c > 0$ is an absolute constant. Another input is Mertens' estimate $\sum_{p\leq y}p^{-1} = \log{\log{y}} + \OO(1)$, as stated in \citet[Eq.~(3.14.5), p.~65]{MR882550}.

The covariance calculation, the Gaussian replacement and concentration estimates, and the Gaussian forest comparisons use the following consequences of the prime number theorem \eqref{analytic:eq:PNT}. The bounds are stated here so that their later applications only require references to the corresponding estimates. Throughout this subsection, $\theta > - 1$ is fixed, $(\sig,X)$ satisfies \eqref{eq:fixed.parameters}, and $\eta_{\sig}$ and $\Lambda_{T}$ are as in Subsection~\ref{subsec:parameters}. All estimates hold as $T \to \infty$, uniformly over this parameter range.

First, partial summation in \eqref{analytic:eq:PNT}, followed by the substitution $v = \log{y}$, gives
\begin{equation}\label{eq:pnt.weighted.sums}
\sum_{p \leq X}\frac{1}{p^{1 + 2\eta_{\sig}}} = \log{\Lambda_{T}} + \OO(1) = \OO(t),
\qquad
\sum_{p \leq X}\frac{(\log{p})^{2k}}{p^{1 + 2\eta_{\sig}}} = \OO_{k}(\Lambda_{T}^{2k}), \qquad k \geq 1,
\end{equation}
where $k$ is a fixed integer. The corresponding main integrals are $\smash{\int_{\log{2}}^{\log{X}}e^{-2\eta_{\sig}v}\rd{v}/v}$ and $\smash{\int_{\log{2}}^{\log{X}}v^{2k - 1}e^{-2\eta_{\sig}v}\rd{v}}$, respectively. Up to $\Lambda_{T}$, replacing $e^{-2\eta_{\sig}v}$ by $1$ in the first integral costs $\OO(\eta_{\sig}\Lambda_{T}) = \OO(1)$. If $\Lambda_{T} < \log{X}$, the remaining integral is $\OO(1)$ by exponential decay. The second integral is $\OO_{k}((\log{X})^{2k})$ by dropping the exponential, and, when $\eta_{\sig} > 0$, it is also $\OO_{k}(\eta_{\sig}^{-2k})$ by extending the upper endpoint to infinity. These estimates give the asserted bounds, including when $\eta_{\sig} = 0$. The prime number theorem remainder contributes $\OO_{k}(1)$ uniformly because $\eta_{\sig} = \oo(1)$ and every fixed power of $v$ is integrable against $e^{-c\sqrt{v}}\rd{v}$ on $[\log{2},\infty)$.

For later use with complete prime contributions, define the deterministic amplitude bound $b_{p}$ by
\begin{equation}\label{eq:prime.block.bound}
b_{p} = \sum_{j \geq 1}\frac{1}{j p^{j\sig}} \leq C p^{-\sig}, \qquad \sum_{p}b_{p}^{3} = \OO(1), \qquad \sum_{p \leq X}b_{p}^{2} = \OO(t).
\end{equation}
The first bound follows by summing a geometric series, uniformly for $\sig \geq 1/2$. The cubic sum is bounded by $C\sum_{n \geq 2}n^{-3/2}$, and the quadratic sum follows from \eqref{eq:pnt.weighted.sums}.

Higher prime powers contribute a bounded amount to covariance sums. Their differentiated amplitudes also have summable squares: for every fixed integer $k \geq 1$,
\begin{equation}\label{eq:pnt.higher.powers}
\begin{aligned}
\sum_{p}\sum_{j \geq 2}\frac{1}{j^{2}p^{2j\sig}}
&\leq \sum_{p} \frac{1}{p^2} \left(1 - \frac{1}{p}\right)^{-1} = \sum_{p}\frac{1}{p(p - 1)} < \infty, \\
\sum_{p}\left(\sum_{j \geq 2}\frac{j^{k - 1}(\log{p})^{k}}{p^{j\sig}}\right)^{2}
&\leq C_{k}\sum_{p}\frac{(\log{p})^{2k}}{p^{4\sig}} = \OO_{k}(1).
\end{aligned}
\end{equation}
Indeed, the inner sum in the second line is at most $C_{k}(\log{p})^{k}p^{-2\sig}$, since $p^{-\sig} \leq 2^{-1/2}$. The last prime sum is bounded by the corresponding sum over integers with denominator $n^{2}$.

The spatial covariance estimates require an oscillatory version of \eqref{eq:pnt.weighted.sums}. For every fixed $0 < \alpha \leq 1$, uniformly for $h,h' \in I_{\theta}$,
\begin{equation}\label{eq:pnt.truncated.cosine}
\sum_{p \leq \exp((\log{X})^{\alpha})}\frac{\cos((h - h')\log{p})}{p^{1 + 2\eta_{\sig}}}
= t\min\{\alpha,r_{T}(h,h')\} + \OO(\log{t}),
\end{equation}
where $r_{T}$ is the logarithmic distance kernel in Lemma~\ref{lem:covariance.input}, with $r_{T}(h,h) = 1$.

To justify uniformity for $h,h' \in I_{\theta}$, where $I_{\theta}$ expands as $T \to \infty$ when $\theta > 0$, let $d = h - h'$, $v_{0} = Kt^{2}$ and $v_{1} = (\log{X})^{\alpha}$, where $K > 0$ is a sufficiently large fixed constant. Since $\alpha > 0$ is fixed, $v_{1} > v_{0}$ for all sufficiently large $T$. Mertens' estimate, recalled after \eqref{analytic:eq:PNT}, bounds the contribution of primes with $\log{p} \leq v_{0}$ by $\OO(\log{v_{0}}) = \OO(\log{t})$, uniformly in $d$. On the remaining range, partial summation in \eqref{analytic:eq:PNT} gives
\begin{equation}\label{eq:pnt.oscillatory.integral}
\sum_{e^{v_{0}} < p \leq e^{v_{1}}}\frac{\cos(d\log{p})}{p^{1 + 2\eta_{\sig}}}
= \int_{v_{0}}^{v_{1}}\frac{e^{-2\eta_{\sig}v}\cos(dv)}{v}\rd{v} + \OO\left((1 + |d|)(1 + \sqrt{v_{0}})e^{-c\sqrt{v_{0}}}\right).
\end{equation}
Here $c > 0$ may be smaller than the constant in \eqref{analytic:eq:PNT}. The factor $1 + |d|$ comes from differentiating the weight $y^{-1 - 2\eta_{\sig}}\cos(d\log{y})$. Since $|d| \leq 2L_{T} = 2e^{\theta t}$, choosing $K$ so that $c\sqrt{K} > \theta \vee 0$ makes this remainder $\oo(1)$, uniformly in $h,h'$.

Set $M = \min\{v_{1},\eta_{\sig}^{-1},|d|^{-1}\}$, with $\eta_{\sig}^{-1} = \infty$ when $\eta_{\sig} = 0$ and $|d|^{-1} = \infty$ when $d = 0$. Then
\begin{equation}\label{eq:pnt.oscillatory.integral.2}
\int_{v_{0}}^{v_{1}}\frac{e^{-2\eta_{\sig}v}\cos(dv)}{v}\rd{v} = \logp\left(\frac{M}{v_{0}}\right) + \OO(1).
\end{equation}
If $M \geq v_{0}$, replacing $e^{-2\eta_{\sig}v}\cos(dv)$ by $1$ on $[v_{0},M]$ costs $\OO(\eta_{\sig}M + d^{2}M^{2}) = \OO(1)$. The remaining integral is bounded by exponential decay if $M = \eta_{\sig}^{-1} < \infty$, or by integration by parts using the decreasing function $e^{-2\eta_{\sig}v}/v$ if $M = |d|^{-1}$; it is empty if $M = v_{1}$. The same bounds apply to the entire integral when $M < v_{0}$. Finally, \eqref{eq:effective.cutoff.bounds} gives $\log{v_{1}} = \alpha\log{\log{X}} = \alpha t + \OO(\log{t})$. Since $\eta_{\sig}^{-1} \geq \Lambda_{T}$, with the convention at zero stated above, it also gives
\[
\log\left(\min\{v_{1},\eta_{\sig}^{-1}\}\right) = \alpha t + \OO(\log{t}), \qquad \log{v_{0}} = \OO(\log{t}).
\]
Indeed, the first logarithm is at most $\log{v_{1}}$ and at least $\min\{\log{v_{1}},\log{\Lambda_{T}}\}$, and both bounds equal $\alpha t + \OO(\log{t})$ for fixed $0 < \alpha \leq 1$. These estimates give $\logp(M / v_{0}) = t\min\{\alpha,r_{T}(h,h')\} + \OO(\log{t})$. Combining this with \eqref{eq:pnt.oscillatory.integral}, \eqref{eq:pnt.oscillatory.integral.2}, and the bound for the removed primes proves \eqref{eq:pnt.truncated.cosine}.

For the perturbed Haar field $H^{u} = H + uH_{\alpha}$, the same estimates give the covariance profile used in the proof of Lemma~\ref{lem:forest.comparison}. Orthogonality of the powers of each Haar phase and the definition of $K_{\alpha}$ in \eqref{eq:haar.covariance.definitions} give $\Cov_{\mathrm{H}}(H_{\alpha}(h),H_{\alpha}(h')) = K_{\alpha}(h,h')$, so
\[
\Cov_{\mathrm{H}}(H^{u}(h),H^{u}(h')) = K_{1}(h,h') + (2u + u^{2})K_{\alpha}(h,h').
\]
Applying \eqref{eq:pnt.truncated.cosine} at scales $1$ and $\alpha$, and using \eqref{eq:pnt.higher.powers} for the bounded contribution of higher prime powers, yields, uniformly for $h,h' \in I_{\theta}$ and $u \in [ - 1/2,1/2]$,
\begin{equation}\label{eq:perturbed.covariance}
\begin{aligned}
\Cov_{\mathrm{H}}(H^{u}(h),H^{u}(h'))
&= \frac{t}{2}A_{u}(r_{T}(h,h')) + \OO(\log{t}), \\
A_{u}(r) &= (1 + u)^{2}\min\{r,\alpha\} + (r - \alpha)_{+}.
\end{aligned}
\end{equation}
The function $A_{u}$ is the cumulative variance profile expressed in units of $t / 2$; it is nondecreasing and Lipschitz. Equations~\eqref{eq:pnt.truncated.cosine} and \eqref{eq:pnt.higher.powers} will also prove the truncated covariance geometry in Lemma~\ref{lem:covariance.input}, while \eqref{eq:pnt.weighted.sums} and \eqref{eq:prime.block.bound} supply the prime sums needed for concentration, Gaussian replacement and approximate integration by parts.

\subsection{Energy concentration from differentiability of the free energy}\label{subsec:thermal.concentration}

\begin{proof}[Proof of Lemma~\ref{lem:thermal.input}]
Given $\e > 0$, the differentiability of $f_{\theta}$ in \eqref{eq:unperturbed.free.energy}, supplied by Lemma~\ref{lem:free.energy.input}, allows a $z > 0$ such that
\[
f_{\theta}(\beta + z) - f_{\theta}(\beta)
< z(f_{\theta}'(\beta) + \e / 2).
\]
Writing $\smash{f_{\theta,T}(b) = t^{-1}\log\int_{I_{\theta}} e^{bH(h)} \rd{h}}$ for the free energy as a function of inverse temperature in this proof, the exponential Markov bound within the Gibbs measure $G_{\beta,\sig,T}^{\mathrm{H}}$ gives
\[
\begin{aligned}
G_{\beta,\sig,T}^{\mathrm{H}} \left(\frac{H(h)}{t} \geq f_{\theta}'(\beta) + \e\right)
&\leq \exp\left( - tz(f_{\theta}'(\beta) + \e)\right)G_{\beta,\sig,T}^{\mathrm{H}}[\exp(zH(h))] \\
&= \exp\left( - tz(f_{\theta}'(\beta) + \e)\right)\frac{\exp(tf_{\theta,T}(\beta + z))}{\exp(tf_{\theta,T}(\beta))} \\
&= \exp\left(t\left[f_{\theta,T}(\beta + z) - f_{\theta,T}(\beta) - z(f_{\theta}'(\beta) + \e)\right]\right).
\end{aligned}
\]
By Lemma~\ref{lem:free.energy.input}, free energy convergence makes the right side exponentially small on an event whose probability tends to one.

For the lower tail, differentiability allows a $0 < z < \beta$ such that
\[
f_{\theta}(\beta - z) - f_{\theta}(\beta)
< - z(f_{\theta}'(\beta) - \e / 2).
\]
The same exponential Markov argument applied to $\exp( - zH(h))$ gives
\[
\begin{aligned}
G_{\beta,\sig,T}^{\mathrm{H}} \left(\frac{H(h)}{t} \leq f_{\theta}'(\beta) - \e\right)
&\leq \exp\left(tz(f_{\theta}'(\beta) - \e)\right)G_{\beta,\sig,T}^{\mathrm{H}}[\exp( - zH(h))] \\
&= \exp\left(tz(f_{\theta}'(\beta) - \e)\right)\frac{\exp(tf_{\theta,T}(\beta - z))}{\exp(tf_{\theta,T}(\beta))} \\
&= \exp\left(t\left[f_{\theta,T}(\beta - z) - f_{\theta,T}(\beta) + z(f_{\theta}'(\beta) - \e)\right]\right).
\end{aligned}
\]
Again, Lemma~\ref{lem:free.energy.input} makes the right side exponentially small on an event whose probability tends to one. Since Gibbs probabilities are bounded by $1$, the two tail bounds imply
\[
\EE_{\mathrm{H}} G_{\beta,\sig,T}^{\mathrm{H}} \left(\left|\frac{H(h)}{t} - f_{\theta}'(\beta)\right| \geq \e\right) \to 0.
\]
At $u = 0$, the first bound in \eqref{eq:field.supremum} of Lemma~\ref{lem:replacement.estimates} gives
\[
\EE_{\mathrm{H}}G_{\beta,\sig,T}^{\mathrm{H}}\left[\left(\frac{H(h)}{t}\right)^{2}\right]
\leq \frac{1}{t^{2}}\EE_{\mathrm{H}}\left[\sup_{h \in I_{\theta}}|H(h)|^{2}\right]
\leq C.
\]
Thus the sampled normalized energies are uniformly integrable under the averaged Gibbs measures. Together with the preceding tail convergence, this proves the $L^{1}$ convergence in \eqref{eq:thermal.concentration}.
\end{proof}

\subsection{Evaluation of the perturbed free energy}\label{subsec:free.energy}

A Gaussian forest is a collection of rooted trees with independent centered Gaussian increments on their edges; the value of the associated Gaussian field at a leaf is the sum of the increments along its path from a root. The trees carry time-inhomogeneous branching random walks: time corresponds to the logarithmic scale, and the variance of an increment per unit scale may depend on the stage. In the two-stage model used here, this variance rate is constant within each stage and changes at the boundary between the stages. Different leaves share precisely the increments on their common ancestral path, and different trees are independent before any spatially constant common Gaussian contribution is added. The trees represent the spatial intervals on the right panel in Figure~\ref{fig:prime.scale.regimes}; their precise subdivision construction is given in the proof of Lemma~\ref{lem:forest.comparison}.

The free energy input $f_{T}(u) \to f(u)$ follows from the comparison bounds in Lemma~\ref{lem:forest.comparison}, the two-stage Gaussian forest formula in Lemma~\ref{lem:forest.free.energy}, and the concentration estimate in Lemma~\ref{lem:replacement.estimates}. For every fixed subdivision factor $b$, the comparison lemma bounds the expected free energy $\EE_{\mathrm{H}}[f_{T}(u)]$ between those of the retained and full forests, up to errors tending to zero as $T \to \infty$. Their branching entropies are $\rho_{b} = \log(b / 2) / \log{b}$ and $1$, respectively. After taking $T \to \infty$, letting $b \to \infty$ makes the two forest limits agree. It then suffices to maximize the variational expression for the forest with branching entropy $1$ over its two stage energy rates. The replacement and concentration estimates are proved in Subsection~\ref{subsec:replacement}, and the comparison lemma and forest free energy formula are proved in Subsection~\ref{subsec:forests}.

Let $A = \alpha - q_{0}$ and $B = 1 - \alpha$ be the lengths of the two scale ranges after division by $t$.

\begin{lemma}[Gaussian forest free energy]\label{lem:forest.free.energy}
Fix $b > 1$ and an integer $m \geq 2$. Consider a Gaussian forest in which every nonterminal vertex has exactly $m$ children and every generation advances the logarithmic scale by $\log{b}$, with $\exp((\theta \vee 0)t + \OO(1))$ initial roots, stage lengths $At$ and $Bt$ rounded to neighboring generations, branching entropy $\rho = \log{m} / \log{b}$, meaning the logarithmic growth rate of the number of descendants per unit scale length, and variance rates $(1 + u)^{2} / 2$ and $1 / 2$, meaning the increment variances per unit scale length. Write $\mathcal{L}_{t}$ for its set of leaves and $X(h)$ for the sum of the Gaussian increments along the path to leaf $h$. Its free energy is $t^{-1}\log(e^{-t}\sum_{h \in \mathcal{L}_{t}}e^{\beta X(h)})$, where the weight $e^{-t}$ per leaf represents the length of a terminal spatial interval in the approximation of the Lebesgue integral. The limit of this free energy equals
\begin{equation}\label{eq:forest.variational}
- 1 + (\theta \vee 0) + \rho(A + B)
 + \sup_{(v_{1},v_{2})\in\mathcal{D}_{\rho}}
 \left\{\beta(Av_{1} + Bv_{2}) - \frac{Av_{1}^{2}}{(1 + u)^{2}} - Bv_{2}^{2}\right\},
\end{equation}
where $v_{1}$ and $v_{2}$ are the energy increments per unit scale length in the two stages, called the stage energy rates. The set $\mathcal{D}_{\rho}$ consists of the nonnegative pairs for which the Gaussian deviation rate does not exceed the logarithmic growth rate of the number of available paths at the end of either stage:
\begin{equation}\label{eq:forest.feasible}
\mathcal{D}_{\rho}
= \left\{(v_{1},v_{2})\in[0,\infty)^{2}:
\frac{Av_{1}^{2}}{(1 + u)^{2}} \leq (\theta \vee 0) + \rho A, \frac{Av_{1}^{2}}{(1 + u)^{2}} + Bv_{2}^{2} \leq (\theta \vee 0) + \rho(A + B)\right\}.
\end{equation}
The first inequality prevents the expected number of paths with the prescribed first-stage increment from being exponentially small, and the second imposes the same requirement at the leaves after both stages. A pair is called feasible when it satisfies both inequalities, and strictly feasible when both inequalities are strict. Convergence of the free energy holds in probability and in $L^{1}$ as $t \to \infty$.
\end{lemma}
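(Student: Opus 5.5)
The plan is the standard two-step argument for time-inhomogeneous branching random walks. I would show the upper bound by a first-moment computation over energy profiles, and the lower bound by a truncated second-moment argument on strictly feasible profiles. Concentration then upgrades the expected free energy to convergence in probability and in $L^{1}$. Write $N_{0} = \exp((\theta\vee 0)t + \OO(1))$ for the number of roots and $n_{1} \approx At/\log b$, $n_{2} \approx Bt/\log b$ for the generation counts of the two stages. Each leaf value is $X(h) = Y_{1}(h) + Y_{2}(h)$, where $Y_{1}$ is Gaussian with variance $(1+u)^{2}At/2$ and $Y_{2}$ has variance $Bt/2$. The number of vertices at the end of stage one is $N_{0}m^{n_{1}} = \exp(t[(\theta\vee 0)+\rho A] + \OO(1))$, and the number of leaves is $\exp(t[(\theta\vee 0)+\rho(A+B)]+\OO(1))$.

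\emph{Upper bound.} Discretize the stage energies $Y_{1}/t \in [v_{1},v_{1}+\delta)$ and $Y_{2}/t \in [v_{2},v_{2}+\delta)$ on a finite grid. Values with $|X(h)| > Ct$ contribute negligibly, by a union bound over the leaves and Gaussian tails, and negative $v_{i}$ only lower the energy. By Gaussian tail asymptotics, the expected number of stage-one vertices at level $v_{1}$ is $\exp(t[(\theta\vee 0)+\rho A - Av_{1}^{2}/(1+u)^{2}] + \oo(t))$. Here the rate $Av_{1}^{2}/(1+u)^{2}$ comes from variance $(1+u)^{2}At/2$, since $(v_{1}At)^{2}/((1+u)^{2}At) = Av_{1}^{2}t/(1+u)^{2}$, reading $v_{1}$ as energy per unit scale. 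Markov's inequality shows that with high probability no vertex sits at an infeasible first-stage level. Conditional on the stage-one vertices that exist, the expected leaf count at level $(v_{1},v_{2})$ is bounded by the unconditional first moment, which vanishes when the second constraint fails. On the feasible set, the number of leaves at level $(v_{1},v_{2})$ is at most $\exp(t[(\theta\vee 0)+\rho(A+B) - Av_{1}^{2}/(1+u)^{2} - Bv_{2}^{2}] + \oo(t))$ with high probability. Summing $e^{-t}e^{\beta t(Av_{1}+Bv_{2})}$ over the grid and letting $\delta\to 0$ gives \eqref{eq:forest.variational} as an upper bound in probability. For the upper bound on the expectation, I would use the trivial bound $\log\sum_{h} e^{\beta X(h)} \leq \log|\mathcal{L}_{t}| + \beta\max_{h}|X(h)|$, which is square-integrable at scale $t$, on the exceptional event.

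\emph{Lower bound.} Fix a strictly feasible pair $(v_{1},v_{2})$ with positive coordinates. Let $Z_{1}$ be the number of stage-one vertices $w$ with $Y_{1}(w) \geq v_{1}At$. Because the constraint is strict, the variance of $Z_{1}$ computed by splitting over common ancestors is $\oo((\EE Z_{1})^{2})$: the pairs branching at an intermediate generation $k$ contribute a ratio that is exponentially small uniformly in $k$, by the same strict inequality applied along the whole linear path. Paley--Zygmund then gives $Z_{1} \geq e^{-\e t}\EE Z_{1}$ with probability bounded below. For each such $w$, the subtree below $w$ is an independent homogeneous branching random walk, whose number of leaves reaching stage-two energy $v_{2}Bt$ is at least $\exp(t[\rho B - Bv_{2}^{2} - \e])$ with probability bounded below, again by a second moment. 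This step uses only $Bv_{2}^{2} < \rho B + \text{slack}$, and the slack is supplied by the second inequality combined with the fact that many independent subtrees are available. Boosting the probability to $1 - \oo(1)$ needs a separate device. I would use either the many independent roots when $\theta > 0$, or restart at generation $\e t$, exploiting the $e^{c\e t}$ independent subtrees with a small energy loss. Restricting the optimization to positive strictly feasible pairs is harmless, because the closure of those pairs is $\mathcal{D}_{\rho}$ and the functional is continuous. This yields the matching lower bound in probability.

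Finally, the free energy is a $\beta/\sqrt{t}$-Lipschitz function, in the normalization of $t^{-1}$, of the independent standard Gaussians driving the increments, since each leaf's variance is $\OO(t)$. Gaussian concentration, as in the proof of Lemma~\ref{lem:replacement.estimates}, therefore gives fluctuations $\OO(t^{-1/2})$. Together with the bounds in probability, this gives convergence in $L^{1}$. I expect the main obstacle to be the second-moment step when the optimizer saturates the first constraint (first stage at the frontier). Then the first-stage count is only subexponentially large, and the lower bound must come from combining near-frontier first-stage vertices with second-stage subtrees. I would handle this by approximating the saturated pair from inside $\mathcal{D}_{\rho}$, which is legitimate by continuity, so that strict feasibility is always available at the cost of an arbitrarily small $\e$.
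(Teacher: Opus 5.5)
Your architecture (first-moment bins for the upper bound, a second-moment lower bound at strictly feasible pairs, Gaussian concentration for the upgrade) is the paper's, and your upper bound is fine. The lower bound, however, fails as written. You announce a truncated second moment, but the count you actually use, $Z_{1} = \#\{w : Y_{1}(w) \geq v_{1}At\}$, puts no condition on the path at intermediate generations. For that count the claim $\Var(Z_{1}) = \oo((\EE Z_{1})^{2})$ is false. Take $\theta = 0$ and $x = v_{1}^{2}/(1 + u)^{2} < \rho$. Consider pairs whose paths split at scale $\kappa At$, with the common ancestor at the atypically high level $2\kappa v_{1}At/(1 + \kappa)$ instead of $\kappa v_{1}At$. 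These pairs contribute $\exp(At\,\kappa[2x/(1 + \kappa) - \rho] + \oo(t))$ to $\EE[Z_{1}^{2}]/(\EE Z_{1})^{2}$, which is exponentially large for small $\kappa > 0$ as soon as $x > \rho/2$. At $u = 0$ and $\rho = 1$ the optimizer has $x = \min\{\beta,2\}^{2}/4$. So Paley--Zygmund gives only an exponentially small probability for every $\beta > \sqrt{2}$, including the whole low-temperature phase.

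Your heuristic of applying the strict inequality ``along the whole linear path'' becomes correct only after you restrict to paths that stay within $\e$ of the linear profile at every intermediate level. A common ancestor at generation $k$ then contributes a relative factor of order $N_{0}^{-1}\exp(k\log{b}\,(x - \rho))$, up to $e^{\OO(\e)t}$ corrections. Strict feasibility makes the sum over $k$ bounded. It tends to zero when $\theta > 0$ but is only $\OO(1)$ when $\theta \leq 0$, which still suffices for Paley--Zygmund. This restriction is exactly the paper's block-by-block selection rule, which turns the selected population into a Galton--Watson process with the explicit variance \eqref{eq:branching.variance}.

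The stage-two step has a related problem. You claim that a single subtree below $w$ has at least $\exp(t[\rho B - Bv_{2}^{2} - \e])$ leaves at the stage-two level with probability bounded below. This is false when $v_{2}^{2} > \rho$, because the expected count in one subtree is then exponentially small, so that subtree is empty with probability $1 - \oo(1)$. This regime is permitted by the second constraint whenever the first constraint has slack. It is also needed: with $\rho = 1$ the optimizer has $v_{2}^{2} > \rho$ at $u = 0$ when $\theta > 0$ and $\beta > 2$. It also has $v_{2}^{2} > \rho$ for small $u < 0$ in the supercritical phase when $\theta \leq 0$, which enters the two-sided derivative at $u = 0$.

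The repair is a Chebyshev bound, conditional on stage one, for the total truncated stage-two count over all selected stage-one vertices. Its conditional mean is the number of selected vertices times the per-subtree mean, and this is exponentially large by the strict second constraint. The bound \eqref{eq:branching.variance.bounds}, with the selected stage-one population as initial size, covers a per-block offspring mean $\leq 1$.

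Finally, your boosting device is unnecessary. The Gaussian concentration you invoke at the end already converts a lower bound holding with probability bounded below into a lower bound on the expected free energy, up to $\OO(t^{-1/2})$. It then converts that into a bound holding with probability $1 - \oo(1)$, which is how the paper concludes.
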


\begin{proof}[Proof of Lemma~\ref{lem:free.energy.input}]
Recall that $q_{0} = |\theta \wedge 0|$, $A = \alpha - q_{0} > 0$, and $B = 1 - \alpha > 0$. In particular, $A + B = 1 - q_{0}$ and $(\theta \vee 0) + A + B = 1 + \theta$. For the local evaluation below, $u$ is further restricted to a sufficiently small neighborhood of zero within $[ - 1/2,1/2]$. Write
\[
\mathcal{J}_{u}(v_{1},v_{2}) = \beta(Av_{1} + Bv_{2}) - \frac{Av_{1}^{2}}{(1 + u)^{2}} - Bv_{2}^{2}
\]
for the objective in \eqref{eq:forest.variational}, and define the deterministic forest limit
\[
\mathcal{F}_{\rho}(u) = - 1 + (\theta \vee 0) + \rho(A + B) + \sup_{(v_{1},v_{2})\in\mathcal{D}_{\rho}}\mathcal{J}_{u}(v_{1},v_{2}), \qquad 0 < \rho \leq 1.
\]
Since $A,B > 0$, the feasible set $\mathcal{D}_{\rho}$ is compact and the objective is strictly concave.

For every fixed sufficiently large even subdivision factor $b$, Lemma~\ref{lem:forest.comparison} bounds $\EE_{\mathrm{H}}[f_{T}(u)]$ between the expected free energies of the retained and full forests, up to errors tending to zero as $T \to \infty$. The full forest has branching entropy $1$, whereas the retained forest has branching entropy $\rho_{b} = \log(b / 2) / \log{b}$. Applying the $L^{1}$ convergence in Lemma~\ref{lem:forest.free.energy} to the bounds \eqref{eq:forest.comparison} therefore gives, for every fixed admissible $u$,
\[
\mathcal{F}_{\rho_{b}}(u)
\leq \liminf_{T\to\infty}\EE_{\mathrm{H}}[f_{T}(u)]
\leq \limsup_{T\to\infty}\EE_{\mathrm{H}}[f_{T}(u)]
\leq \mathcal{F}_{1}(u).
\]
Here $T \to \infty$ also gives $t = \log\log T \to \infty$, as required when applying the forest lemma.

To pass to $b \to \infty$, observe that $\mathcal{D}_{\rho} \subseteq \mathcal{D}_{1}$ for $\rho \leq 1$. Conversely, if $(v_{1},v_{2}) \in \mathcal{D}_{1}$, then $(\sqrt{\rho}v_{1},\sqrt{\rho}v_{2}) \in \mathcal{D}_{\rho}$: both quadratic costs are multiplied by $\rho$, and $\rho(\theta \vee 0) \leq \theta \vee 0$. Applying this observation to a maximizing pair in $\mathcal{D}_{1}$ and using continuity of $\mathcal{J}_{u}$ shows that $\mathcal{F}_{\rho}(u) \to \mathcal{F}_{1}(u)$ as $\rho \uparrow 1$. Since $\rho_{b} \uparrow 1$, the preceding bounds imply
\[
g_{T}(u) \leqdef \EE_{\mathrm{H}}[f_{T}(u)] \to f(u) \leqdef \mathcal{F}_{1}(u) = \theta + \sup_{(v_{1},v_{2})\in\mathcal{D}_{1}}\mathcal{J}_{u}(v_{1},v_{2})
\]
as $T \to \infty$.

The concentration estimate \eqref{eq:log.partition.function.concentration} in Lemma~\ref{lem:replacement.estimates}, applied with deviation threshold $ty$, gives
\[
\PP_{\mathrm{H}}(|f_{T}(u) - g_{T}(u)| > y) \leq 2\exp( - cty^{2}), \qquad y > 0.
\]
Integrating this bound yields
\[
\EE_{\mathrm{H}}[|f_{T}(u) - g_{T}(u)|]
\leq 2\int_{0}^{\infty}\exp( - cty^{2})\rd{y}
\leq \frac{C}{\sqrt{t}}.
\]
Consequently, $f_{T}(u) \to f(u)$ in $L^{1}$ and in probability as $T \to \infty$. It remains to evaluate the deterministic supremum and its derivative at zero.

Set $w_{1} = v_{1}/(1 + u)$ and $w_{2} = v_{2}$. The feasible domain becomes the fixed compact set
\[
\mathcal{D}
= \left\{(w_{1},w_{2})\in[0,\infty)^{2}:Aw_{1}^{2} \leq (\theta \vee 0) + A, Aw_{1}^{2} + Bw_{2}^{2} \leq 1 + \theta\right\},
\]
and
\[
f(u) = \theta + \max_{(w_{1},w_{2})\in\mathcal{D}}\left\{\beta\big(A(1 + u)w_{1} + Bw_{2}\big) - Aw_{1}^{2} - Bw_{2}^{2}\right\}.
\]
At $u = 0$, Cauchy-Schwarz gives $Aw_{1} + Bw_{2} \leq \sqrt{(A + B)(Aw_{1}^{2} + Bw_{2}^{2})}$, with equality precisely when $w_{1} = w_{2}$. The maximum subject only to the second constraint is therefore attained uniquely at $w_{1} = w_{2} = c$, where
\[
c = \min\left\{\frac{\beta}{2},\sqrt{\frac{1 + \theta}{A + B}}\right\} = \frac{1}{2}\min\{\beta,\beta_{c}(\theta)\}.
\]
This pair satisfies the first constraint: if $\theta \leq 0$, then $A + B = 1 + \theta$ and $c \leq 1$, while if $\theta > 0$, then $A + B = 1$ and $Ac^{2} \leq A(1 + \theta) \leq A + \theta$, since $A < 1$. Consequently, $f(0) = \theta + (A + B)(\beta c - c^{2})$, which is \eqref{eq:unperturbed.free.energy}. Its two branches have the same derivative at $\beta = \beta_{c}(\theta)$, so
\[
f_{\theta}'(\beta) = \frac{1 - q_{0}}{2}\min\{\beta,\beta_{c}(\theta)\}.
\]
The objective is strictly concave in $(w_{1},w_{2})$, and its derivative with respect to $u$ is continuous on a neighborhood of $\{0\}\times\mathcal{D}$. The envelope theorem of \citet[Corollary~4(iii), p.~592]{MilgromSegal2002} thus gives differentiability at zero, including at the critical inverse temperature, and
\[
f'(0) = \beta Ac = \frac{\beta(\alpha - q_{0})}{2}\min\{\beta,\beta_{c}(\theta)\},
\]
which proves \eqref{eq:perturbation.derivative}.

It remains to prove convergence of the expected derivatives. For each fixed $T$, differentiation of the defining integral gives
\[
|f_{T}'(u)| \leq \frac{\beta}{t}\sup_{h\in I_{\theta}}|H_{\alpha}(h)|.
\]
The right side is bounded by a finite deterministic constant for fixed $T$, since $H_{\alpha}$ is a finite prime-power sum. Differentiation can therefore be interchanged with expectation, giving $g_{T}'(u) = \EE_{\mathrm{H}}[f_{T}'(u)]$. Moreover, the second derivative of $f_{T}$ is $\beta^{2}/t$ times the variance of $H_{\alpha}(h)$ when $h$ is sampled from the Gibbs measure of $H^{u}$. Thus $f_{T}$ and $g_{T}$ are convex.

For every sufficiently small fixed $s > 0$, convexity gives
\[
\frac{g_{T}(0) - g_{T}( - s)}{s}
\leq g_{T}'(0)
\leq \frac{g_{T}(s) - g_{T}(0)}{s}.
\]
The pointwise convergence $g_{T} \to f$ proved above consequently implies
\[
\frac{f(0) - f( - s)}{s}
\leq \liminf_{T\to\infty}\EE_{\mathrm{H}}[f_{T}'(0)]
\leq \limsup_{T\to\infty}\EE_{\mathrm{H}}[f_{T}'(0)]
\leq \frac{f(s) - f(0)}{s}.
\]
Letting $s \downarrow 0$ and using differentiability of $f$ at zero proves $\EE_{\mathrm{H}}[f_{T}'(0)] \to f'(0)$.
\end{proof}

\subsection{The covariance and approximate integration by parts inputs}\label{subsec:integration}

Let $B_{p}(h) = \Re\sum_{j: p^{j} \leq X}U_{p}^{j}/(j p^{j(\sig + \ii h)})$ be the complete contribution of $p$ to $H$, let $B_{p,\alpha}$ be its contribution to $H_{\alpha}$, and define its cross-covariance kernel $K_{p,\alpha}(h,h') = \EE_{\mathrm{H}}[B_{p,\alpha}(h)B_{p}(h')]$ and recall from \eqref{eq:prime.block.bound} the deterministic bound $\smash{b_{p} = \sum_{j \geq 1}1 / (j p^{j\sig})}$ on the absolute value of either prime contribution. The powers $U_{p}^{j}$ are the angular harmonics of $U_{p}$, and their orthogonality means that $\smash{\EE_{\mathrm{H}}[U_{p}^{j}\overline{U_{p}^{k}}]}$ equals $1$ for $j = k$ and $0$ otherwise. This gives the exact identity
\begin{equation}\label{eq:prime.covariance}
K_{p,\alpha}(h,h')
= \frac{1}{2} \sum_{\substack{j \geq 1 \\p^{j} \leq \exp((\log{X})^{\alpha})}} \frac{\cos(j(h - h')\log{p})}{j^{2}p^{2j\sig}}.
\end{equation}

The approximate integration by parts identity \eqref{eq:haar.ibp} in Lemma~\ref{lem:haar.ibp} is proved by temporarily removing one complete prime contribution $B_{p}$ from $H$. The resulting field $H^{(-p)} = H - B_{p}$, called the cavity field, depends only on the phases $U_{q}$ with $q \neq p$ and is therefore independent of $U_{p}$. All powers of $p$ are removed together because they are functions of that same phase. Restoring $B_{p}$ by interpolation and Taylor expansion produces the covariance terms in the integration by parts identity and a remainder bounded by $Cn^{2}\beta^{2}\|\Psi_{T}\|_{\infty}b_{p}^{3}$ for each prime. This removal and restoration procedure is called the cavity method. The exact covariance \eqref{eq:prime.covariance} and the cubic summability bound \eqref{eq:prime.block.bound} from Subsection~\ref{subsec:PNT} are used in the proof below.

\begin{proof}[Proof of Lemma~\ref{lem:haar.ibp}]
For the cavity field $H^{(-p)}$ defined above, let $H_{z} = H^{(-p)} + zB_{p}$, $0 \leq z \leq 1$, be the interpolating field, and let $G_{z}$ be its Gibbs measure, with density proportional to $e^{\beta H_{z}(h)}$. At $z = 0$, the prime contribution is absent, so $G_{0}$ is the Gibbs measure of the cavity field and is independent of $U_{p}$. At $z = 1$, the contribution is fully restored, so $H_{1} = H$ and $G_{1} = G_{\beta,\sig,T}^{\mathrm{H}}$. Define the partially averaged Gibbs expectation
\[
F_{p}(z) = \EE_{U_{p}}G_{z}^{\times n}[B_{p,\alpha}(h_{k})\Psi_{T}(\bb{h})],
\]
where $\EE_{U_{p}}$ denotes expectation over $U_{p}$ alone, with the other prime phases held fixed. Since $G_{0}$ is independent of $U_{p}$ and $\EE_{U_{p}}[B_{p,\alpha}(h)] = 0$ for every fixed $h$, averaging over $U_{p}$ inside the Gibbs integral gives $F_{p}(0) = 0$. Differentiating the Gibbs density at $z = 0$ inserts one factor $B_{p}$; its expectation with $B_{p,\alpha}$ is the exact covariance $K_{p,\alpha}$ from \eqref{eq:prime.covariance}. This gives
\[
F_{p}'(0) = \beta \sum_{j = 1}^{n} (G_{0})^{\times n}[K_{p,\alpha}(h_{k},h_{j})\Psi_{T}(\bb{h})] - n\beta (G_{0})^{\times(n+1)} [K_{p,\alpha}(h_{k},h_{n+1})\Psi_{T}(\bb{h})].
\]
The second derivative contains three factors from the same prime of the form $B_{p}(h_{j})$ or $B_{p,\alpha}(h_{k})$ and at most $Cn^{2}$ Gibbs terms, so
\[
\sup_{0 \leq z \leq 1}|F_{p}''(z)|
\leq Cn^{2}\beta^{2}\|\Psi_{T}\|_{\infty}b_{p}^{3}.
\]
To replace the covariance expectations under $G_{0}$ by their counterparts under $G_{1}$, let $\widetilde{\Psi}$ be a bounded deterministic function of $r$ sampled positions. Differentiating the Gibbs density and integrating over $z \in [0,1]$ gives
\[
\begin{aligned}
&\EE_{U_{p}}G_{1}^{\times r}[\widetilde{\Psi}(h_{1},\ldots,h_{r})] - G_{0}^{\times r}[\widetilde{\Psi}(h_{1},\ldots,h_{r})] \\
&\qquad= \beta\int_{0}^{1}\EE_{U_{p}}G_{z}^{\times(r + 1)}\left[\widetilde{\Psi}(h_{1},\ldots,h_{r})\left(\sum_{j = 1}^{r}B_{p}(h_{j}) - rB_{p}(h_{r + 1})\right)\right]\rd{z}.
\end{aligned}
\]
The term at $z = 0$ needs no expectation over $U_{p}$ because $G_{0}$ and $\widetilde{\Psi}$ are independent of $U_{p}$. Since $|B_{p}(h)| \leq b_{p}$, the parenthesized factor is bounded in absolute value by $2rb_{p}$. Consequently,
\[
\left|\EE_{U_{p}}G_{1}^{\times r}[\widetilde{\Psi}(h_{1},\ldots,h_{r})] - G_{0}^{\times r}[\widetilde{\Psi}(h_{1},\ldots,h_{r})]\right|
\leq 2r\beta b_{p}\|\widetilde{\Psi}\|_{\infty}.
\]
Moreover, the definition of $K_{p,\alpha}$ gives
\[
|K_{p,\alpha}(h,h')|
= \left|\EE_{U_{p}}[B_{p,\alpha}(h)B_{p}(h')]\right|
\leq b_{p}^{2}.
\]
Apply the preceding comparison with $r = n$ and $\widetilde{\Psi}(h_{1},\ldots,h_{n}) = K_{p,\alpha}(h_{k},h_{j})\Psi_{T}(\bb{h})$ for each $j \in \{1,\ldots,n\}$, and with $r = n + 1$ and $\widetilde{\Psi}(h_{1},\ldots,h_{n + 1}) = K_{p,\alpha}(h_{k},h_{n + 1})\Psi_{T}(\bb{h})$ for the last term in $F_{p}'(0)$. Each such function has supremum norm at most $b_{p}^{2}\|\Psi_{T}\|_{\infty}$. Accounting for the $n$ coefficients $\beta$ and the last coefficient $n\beta$, replacing all covariance expectations in $F_{p}'(0)$ by their counterparts under $G_{1}$ gives
\[
\begin{aligned}
&\left|F_{p}'(0) - \left\{\beta\sum_{j = 1}^{n}\EE_{U_{p}}G_{1}^{\times n}[K_{p,\alpha}(h_{k},h_{j})\Psi_{T}(\bb{h})] - n\beta\EE_{U_{p}}G_{1}^{\times(n + 1)}[K_{p,\alpha}(h_{k},h_{n + 1})\Psi_{T}(\bb{h})]\right\}\right| \\
&\qquad\leq \left(2n^{2} + 2n(n + 1)\right)\beta^{2}\|\Psi_{T}\|_{\infty}b_{p}^{3}
\leq Cn^{2}\beta^{2}\|\Psi_{T}\|_{\infty}b_{p}^{3}.
\end{aligned}
\]

Since $F_{p}(0) = 0$, Taylor's formula gives
\[
F_{p}(1) = F_{p}'(0) + \int_{0}^{1}(1 - z)F_{p}''(z)\rd{z},
\qquad
|F_{p}(1) - F_{p}'(0)|
\leq \frac{1}{2}\sup_{0 \leq z \leq 1}|F_{p}''(z)|.
\]
Combining the displayed second-derivative bound with the covariance replacement estimate yields
\[
\begin{aligned}
F_{p}(1)
&= \beta\sum_{j = 1}^{n}\EE_{U_{p}}G_{1}^{\times n}[K_{p,\alpha}(h_{k},h_{j})\Psi_{T}(\bb{h})] \\
&\qquad - n\beta\EE_{U_{p}}G_{1}^{\times(n + 1)}[K_{p,\alpha}(h_{k},h_{n + 1})\Psi_{T}(\bb{h})]
 + \OO(n^{2}\beta^{2}\|\Psi_{T}\|_{\infty}b_{p}^{3}),
\end{aligned}
\]
uniformly in the remaining prime phases. Taking expectation over those phases changes the partial expectation $\EE_{U_{p}}$ into $\EE_{\mathrm{H}}$. Summing over $p \leq X$ then restores $H_{\alpha} = \sum_{p \leq X}B_{p,\alpha}$ on the left and $K_{\alpha} = \sum_{p \leq X}K_{p,\alpha}$ in each covariance term on the right, with $G_{1} = G_{\beta,\sig,T}^{\mathrm{H}}$. By \eqref{eq:prime.block.bound}, the total error is bounded by
\[
Cn^{2}\beta^{2}\|\Psi_{T}\|_{\infty}\sum_{p \leq X}b_{p}^{3}
\leq Cn^{2}\beta^{2}\|\Psi_{T}\|_{\infty}.
\]
This proves \eqref{eq:haar.ibp}.
\end{proof}

\begin{proof}[Proof of Lemma~\ref{lem:covariance.input}]
Sum \eqref{eq:prime.covariance} over $p$. The contribution of all powers $j \geq 2$ is uniformly bounded by \eqref{eq:pnt.higher.powers}. Applying the truncated prime-sum estimate \eqref{eq:pnt.truncated.cosine} in Subsection~\ref{subsec:PNT} to the terms with $j = 1$ gives
\[
K_{\alpha}(h,h') = \frac{t}{2}\min\{\alpha,r_{T}(h,h')\} + \OO(\log{t}).
\]
Taking $\alpha = 1$ and $h = h'$ gives $V_{T} = t / 2 + \OO(\log{t})$. Dividing the case $\alpha = 1$ by this variance gives $R_{\sig,T}^{\mathrm{H}}(h,h') = r_{T}(h,h') + \OO((\log{t}) / t)$, and substitution proves \eqref{eq:truncated.overlap.covariance}. Since $|h - h'| \leq 2L_{T}$, the definition of $r_{T}$ gives the asserted lower bound.
\end{proof}

\subsection{Analytic estimates and Gaussian replacement}\label{subsec:replacement}

The calculation of the limiting perturbed free energy $f(u)$ in Lemma~\ref{lem:free.energy.input} requires replacing $H^{u}$ by a sum $\Gamma^{u} = \sum_{p}\Gamma_{p}^{u}$ of independent centered Gaussian prime contributions with matching covariances. The expected log-partition functions must differ by $\oo(t)$, so that division by $t$ makes this replacement error vanish. Concentration of the log-partition function around its expectation then gives convergence in probability after the same division. The spatial derivative estimate below also justifies approximating the integral defining the partition function by a finite sum of exponentials of values of the perturbed Haar field or its Gaussian replacement, with each value weighted by the length of its spatial grid interval. This approximation is quantified in \eqref{eq:grid.error} and is used in the proof of Lemma~\ref{lem:forest.comparison}. Lemma~\ref{lem:replacement.estimates} states the replacement, concentration and supremum estimates before deriving them from the prime-sum estimates in Subsection~\ref{subsec:PNT} and pointwise moment bounds.

Fix $q_{0} < \alpha < 1$, as in Lemma~\ref{lem:free.energy.input}, and let $u \in [ - 1/2,1/2]$. Let $B_{p}^{u} = B_{p} + uB_{p,\alpha}$ be the complete contribution of the prime $p$ to $H^{u}$, including all powers $p^{j}$ allowed by the corresponding cutoffs. Let $\Gamma_{p}^{u}$ be independent centered Gaussian fields satisfying
\begin{equation}\label{eq:cov.id}
\EE[\Gamma_{p}^{u}(h)\Gamma_{p}^{u}(h')] = \EE_{\mathrm{H}}[B_{p}^{u}(h)B_{p}^{u}(h')].
\end{equation}
For every prime $p$ and positive integer $j$, let $Z_{p,j}^{(1)}$ and $Z_{p,j}^{(2)}$ be centered real Gaussian variables of variance $1/2$, all independent across $(p,j)$ and the superscript index. Construct $\Gamma_{p}^{u}$ from $B_{p}^{u}$ by replacing $\Re(U_{p}^{j})$ with $\smash{Z_{p,j}^{(1)}}$ and $\Im(U_{p}^{j})$ with $\smash{Z_{p,j}^{(2)}}$, retaining the same deterministic coefficients, spatial frequencies $j\log{p}$, perturbation multipliers and cutoffs. Explicitly, replacing each factor $\Re(U_{p}^{j}e^{-\ii jh\log{p}})$ by $Z_{p,j}^{(1)}\cos(jh\log{p}) + Z_{p,j}^{(2)}\sin(jh\log{p})$ gives
\[
\Gamma_{p}^{u}(h)
= \sum_{j:p^{j}\leq X}\frac{1 + u\ind_{\{p^{j}\leq \exp((\log{X})^{\alpha})\}}}{j p^{j\sig}}
\left(Z_{p,j}^{(1)}\cos(jh\log{p}) + Z_{p,j}^{(2)}\sin(jh\log{p})\right).
\]
Orthogonality of the powers of each Haar phase, as used in \eqref{eq:prime.covariance}, shows that the real and imaginary Haar coefficients have the same pairwise second moments as their Gaussian replacements. Since their deterministic coefficients are unchanged, expanding the products on both sides of \eqref{eq:cov.id} gives the covariance identity. Recall the bound $\smash{b_{p} = \sum_{j \geq 1}1 / (j p^{j\sig})}$ from \eqref{eq:prime.block.bound} and the half-length $L_{T} = (\log T)^{\theta}$ of $I_{\theta}$. Define the log-partition function of a field $F$ by
\[
\Upsilon(F) = \log \int_{I_{\theta}} e^{\beta F(h)} \rd{h}.
\]
In particular, $\Upsilon(H^{u}) = tf_{T}(u)$ by \eqref{eq:perturbed.free.energy.definition}.

\begin{lemma}[Replacement, concentration, and supremum estimates]\label{lem:replacement.estimates}
Fix $\theta > - 1$, $\beta > 0$, and $q_{0} < \alpha < 1$, and let $(\sig,X)$ satisfy \eqref{eq:fixed.parameters}. Uniformly for $u \in [ - 1/2,1/2]$, Gaussian replacement satisfies, as $T \to \infty$,
\begin{equation}\label{eq:gaussian.replacement}
\left|\EE_{\mathrm{H}}[\Upsilon(H^{u})] - \EE\left[\Upsilon(\Gamma^{u})\right]\right|
\leq C\beta^{3}\sum_{p}b_{p}^{3} = \OO(1).
\end{equation}
For every deviation threshold $v > 0$, the log-partition function $\Upsilon(H^{u})$ of the perturbed Haar field obeys the concentration bound
\begin{equation}\label{eq:log.partition.function.concentration}
\PP_{\mathrm{H}}(|\Upsilon(H^{u}) - \EE_{\mathrm{H}}[\Upsilon(H^{u})]| > v)
\leq 2\exp( - cv^{2} / t).
\end{equation}
The same type of concentration bound holds for its Gaussian counterpart $\Upsilon(\Gamma^{u})$. Moreover,
\begin{equation}\label{eq:field.supremum}
\EE_{\mathrm{H}}\left[\sup_{h \in I_{\theta}}|H^{u}(h)|^{2}\right] \leq Ct^{2},
\qquad
\EE_{\mathrm{H}}\left[\sup_{h \in I_{\theta}}|(H^{u})'(h)|\right] \leq C\sqrt{t}\Lambda_{T}.
\end{equation}
Here the prime on $(H^{u})'$ denotes differentiation with respect to the spatial variable $h$. These supremum estimates also hold for the Gaussian field $\Gamma^{u}$. The constants may depend on $\theta$, $\beta$, $\alpha$, $\phi$, $\Phi$, and $C_{\sig}$, but are independent of $T$, the choice of $(\sig,X)$ within \eqref{eq:fixed.parameters}, and $u \in [ - 1/2,1/2]$.
\end{lemma}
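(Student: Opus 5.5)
The three assertions of Lemma~\ref{lem:replacement.estimates} are proved separately, each by exploiting the independence of the complete prime contributions $B_{p}^{u}$ across primes.

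\emph{Gaussian replacement.} We interpolate one prime at a time, in Lindeberg fashion. Order the primes $p \leq X$ and, for each $p$, let $W_{p}$ be the hybrid field built from the Gaussian contributions $\Gamma_{q}^{u}$ for $q < p$ and the Haar contributions $B_{q}^{u}$ for $q > p$. With this field, the $p$th swap compares $\EE[\Upsilon(W_{p} + B_{p}^{u})]$ with $\EE[\Upsilon(W_{p} + \Gamma_{p}^{u})]$.
\begin{enumerate}[label=(\roman*)]
\item Expand $z \mapsto \Upsilon(W_{p} + zY)$ to third order at $z = 0$ for $Y \in \{B_{p}^{u},\Gamma_{p}^{u}\}$. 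The first derivative is $\beta G_{W_{p}}[Y]$. The second derivative is $\beta^{2}(G_{W_{p}}[Y^{2}] - G_{W_{p}}[Y]^{2})$, which is a quadratic form in $Y$ with coefficients independent of $Y$.
\item Since $W_{p}$ is independent of $Y$, both $Y$'s are centered, and they have identical spatial covariances by \eqref{eq:cov.id}, the first- and second-order terms cancel in expectation.
\item The third derivative is $\beta^{3}$ times a Gibbs third cumulant of $Y$, so it is bounded by $C\beta^{3}\sup_{h}|Y(h)|^{3}$.
\item For the Haar contribution, $|B_{p}^{u}| \leq \tfrac{3}{2}b_{p}$ deterministically. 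For the Gaussian contribution, $\EE\sup_{h}|\Gamma_{p}^{u}(h)|^{3} \leq C\,\EE(\sum_{j}|Z_{p,j}|/(jp^{j\sig}))^{3} \leq Cb_{p}^{3}$.
\end{enumerate}
Summing over $p$ and using $\sum_{p}b_{p}^{3} = \OO(1)$ from \eqref{eq:prime.block.bound} gives \eqref{eq:gaussian.replacement}.

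\emph{Concentration.} The field $H^{u}$ is a function of the independent phases $U_{p}$, and changing a single $U_{p}$ changes $\Upsilon(H^{u})$ by at most $2\beta\cdot\tfrac{3}{2}b_{p}$. McDiarmid's bounded-differences inequality then gives a tail $2\exp(-cv^{2}/\sum_{p \leq X}b_{p}^{2})$, and $\sum_{p \leq X}b_{p}^{2} = \OO(t)$ yields \eqref{eq:log.partition.function.concentration}. For $\Gamma^{u}$, note that $\Upsilon$ is $\beta$-Lipschitz in the sup norm of the field. The field is linear in the standard Gaussians $Z_{p,j}$, with coefficient vector of squared Euclidean norm at most $C\sum_{p}b_{p}^{2}$. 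Gaussian Lipschitz concentration therefore gives the same bound.

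\emph{Supremum estimates.} Let $Y_{0} = H^{u}(0)$. Its second moment is $\OO(t)$ by Lemma~\ref{lem:covariance.input}. For the derivative, $(H^{u})'(h)$ is a sum of independent centered bounded prime terms with frequencies $j\log p$. Its variance is at most $C\sum_{p \leq X}(\log p)^{2}p^{-1-2\eta_{\sig}} + \OO(1) = \OO(\Lambda_{T}^{2})$ by \eqref{eq:pnt.weighted.sums} and \eqref{eq:pnt.higher.powers}. The same sums with $k=2$ control the second spatial derivative at scale $\Lambda_{T}^{2}$, uniformly in $h$. Cover $I_{\theta}$ by $N \asymp L_{T}\Lambda_{T}$ grid points of mesh $\Lambda_{T}^{-1}$. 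Then bound
\[
\sup_{h}|(H^{u})'(h)| \leq \max_{\text{grid}}|(H^{u})'| + \Lambda_{T}^{-1}\sup_{h}|(H^{u})''(h)|.
\]
For the grid maximum, the sub-Gaussian (Hoeffding) tail of each sum of bounded independent terms gives $\EE\max_{\text{grid}}|(H^{u})'| \leq C\Lambda_{T}\sqrt{\log N} = \OO(\sqrt{t}\Lambda_{T})$, since $\log N = \OO(t)$. The same chaining step applied to $(H^{u})''$ controls the second term, whose contribution is therefore also $\OO(\sqrt{t}\Lambda_{T})$.

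For the field itself, combine the grid maximum of $|H^{u}|$, which is $\OO(\sqrt{t}\sqrt{\log N}) = \OO(t)$ in $L^{2}$, with mesh times the derivative supremum. For the second moment, use instead the Hoeffding tails with a union bound over the grid points. Every ingredient is Gaussian or sub-Gaussian with the same variances, so the identical argument applies to $\Gamma^{u}$.

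The main obstacle I expect is the uniform control of the derivative supremum over the long interval $I_{\theta}$ when $\theta > 0$. The discretization must be fine enough at scale $\Lambda_{T}^{-1}$ while the number of grid points $N$ stays $e^{\OO(t)}$. Doing this requires second-moment bounds on $(H^{u})''$ via \eqref{eq:pnt.weighted.sums} with $k=2$, or alternatively a Dudley entropy bound using the Lipschitz increment metric. I would try the grid-plus-Taylor argument first.
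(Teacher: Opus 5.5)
Your Gaussian replacement (the prime-by-prime Lindeberg swap, with first- and second-order terms cancelling by \eqref{eq:cov.id} and a third-derivative remainder $C\beta^{3}b_{p}^{3}$) matches the paper's proof and is fine. So do both concentration arguments: bounded differences with $c_{p} = 3\beta b_{p}$, and Gaussian Lipschitz concentration with constant $\OO(\beta\sqrt{t})$.

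The gap is in the supremum estimates. Your bound $\sup_{h}|(H^{u})'(h)| \leq \max_{\mathrm{grid}}|(H^{u})'| + \Lambda_{T}^{-1}\sup_{h}|(H^{u})''(h)|$ needs $\EE\sup_{h}|(H^{u})''(h)| \leq C\sqrt{t}\Lambda_{T}^{2}$. Applying the same single-scale step to $(H^{u})''$ only reduces this to a bound on $\sup_{h}|(H^{u})'''(h)|$, and so on, so the recursion never closes. It also cannot be stopped by a deterministic bound at some order $m$. The only one available, $\sup_{h}|(H^{u})^{(m)}(h)| \leq C\sum_{p^{j} \leq X}(j\log p)^{m}/(jp^{j\sig})$, is of order $X^{1/2}(\log X)^{m-1}$. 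Absorbing it into a grid forces $\log N \gtrsim \log X \geq e^{t}/t^{\Phi}$, which destroys the factor $\sqrt{\log N} = \OO(\sqrt{t})$. The field bound has the same defect. In addition, it needs an $L^{2}$ bound on $\Lambda_{T}^{-1}\sup_{h}|(H^{u})'(h)|$, while you only aimed at an $L^{1}$ bound for the derivative supremum.

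The missing idea is that a crude bound losing a factor $e^{\OO(t)}$ costs nothing. Refining the mesh absorbs it and changes $\log N$ by only $\OO(t)$.
\begin{itemize}
\item For the derivative: $\sup_{h}|f(h)| \leq |f(0)| + \int_{I_{\theta}}|f'|$, with pointwise bounds from the case $k = 3$ of \eqref{eq:pnt.weighted.sums}, gives $\EE\sup_{h}|(H^{u})''(h)| \leq C\Lambda_{T}^{2} + CL_{T}\Lambda_{T}^{3}$. With mesh $\delta = (L_{T}\Lambda_{T}^{2})^{-1}$ you still have $\log N = \OO(t)$. The grid term is then $C\sqrt{t}\Lambda_{T}$ and the remainder is $\OO(\Lambda_{T})$.
\item For the field in $L^{2}$: $\sup_{h}|f(h)|^{2} \leq 2|f(0)|^{2} + 4L_{T}\int_{I_{\theta}}|f'|^{2}$ gives $\EE\sup_{h}|(H^{u})'(h)|^{2} = \OO(L_{T}^{2}\Lambda_{T}^{4})$. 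The same mesh then closes the bound.
\end{itemize}
Your Dudley fallback also works. Use the Hoeffding increment metrics $C\min\{\Lambda_{T},\Lambda_{T}^{2}|h - h'|\}$ for $(H^{u})'$ and $C\min\{\sqrt{t},\Lambda_{T}|h - h'|\}$ for $H^{u}$, and its tail form for the second moment.

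The paper avoids grids altogether. It uses $\sup_{h}|f(h)|^{2k} \leq |f(0)|^{2k} + 2k\int_{I_{\theta}}|f|^{2k-1}|f'|$ together with H\"older's inequality. It then applies the Hoeffding bounds $\|H^{u}(h)\|_{2k} \leq C\sqrt{kt}$, $\|(H^{u})'(h)\|_{2k} \leq C\sqrt{k}\Lambda_{T}$ and $\|(H^{u})''(h)\|_{2k} \leq C\sqrt{k}\Lambda_{T}^{2}$. Taking $k \asymp t$ keeps the polynomial loss $(1 + kL_{T}\Lambda_{T})^{1/(2k)}$ bounded. This is the moment version of your union bound, with the $1/(2k)$ root doing the job of $\sqrt{\log N}$. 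It needs only second derivatives and gives the $L^{1}$ and $L^{2}$ statements at once.
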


\begin{proof}[Proof of Lemma~\ref{lem:replacement.estimates}]
All supremum norms below are over $h \in I_{\theta}$. The amplitude bound \eqref{eq:prime.block.bound} and Minkowski's inequality in $L^{3}$, applied to the Gaussian sine and cosine expansion, give
\[
\|B_{p}^{u}\|_{\infty} \leq Cb_{p}, \qquad \sum_{p}b_{p}^{3} = \OO(1), \qquad
\left(\EE[\|\Gamma_{p}^{u}\|_{\infty}^{3}]\right)^{1/3} \leq C\sum_{j:p^{j}\leq X}\frac{1}{j p^{j\sig}} \leq Cb_{p}.
\]
For bounded fields $F,V$, write $G_{F}$ for the Gibbs measure of $F$. The third derivative of $s \mapsto \Upsilon(F + sV)$ is $\beta^{3}$ times the third centered moment of $V$ under $G_{F + sV}$, so
\begin{equation}\label{eq:third.derivative.bound}
\left|\frac{\rd^{3}}{\rd s^{3}}\Upsilon(F + sV)\right| \leq 8\beta^{3}\|V\|_{\infty}^{3}.
\end{equation}
Take the Gaussian fields $\Gamma_{p}^{u}$ independent of all Haar phases, write $\EE_{*}$ for joint expectation, and replace complete prime contributions in increasing order. At prime $p$, the remaining field $F_{p}^{u} = \sum_{q < p}\Gamma_{q}^{u} + \sum_{p < q \leq X}B_{q}^{u}$ is independent of both $B_{p}^{u}$ and $\Gamma_{p}^{u}$. Conditional on $F_{p}^{u}$, their means vanish and their covariance kernels agree by \eqref{eq:cov.id}, so the expected linear and quadratic terms in the second-order Taylor expansions of $\Upsilon(F_{p}^{u} + B_{p}^{u})$ and $\Upsilon(F_{p}^{u} + \Gamma_{p}^{u})$ coincide. The remainder bound from \eqref{eq:third.derivative.bound} gives
\[
\left|\EE_{*}\left[\Upsilon(F_{p}^{u} + B_{p}^{u}) - \Upsilon(F_{p}^{u} + \Gamma_{p}^{u})\right]\right|
\leq \frac{4}{3}\beta^{3}\left(\EE_{\mathrm{H}}[\|B_{p}^{u}\|_{\infty}^{3}] + \EE[\|\Gamma_{p}^{u}\|_{\infty}^{3}]\right)
\leq C\beta^{3}b_{p}^{3}.
\]
Summing these successive differences proves \eqref{eq:gaussian.replacement}. Each replacement treats all powers of a prime together, preserving their dependence through the common Haar phase.

Changing only $U_{p}$ changes $H^{u}$ by at most $3b_{p}$ in supremum norm. Since $|\Upsilon(F) - \Upsilon(\widetilde{F})| \leq \beta \smash{\|F - \widetilde{F}\|_{\infty}}$, the corresponding change in $\Upsilon(H^{u})$ is at most $c_{p} = 3\beta b_{p}$, with $\sum_{p \leq X}c_{p}^{2} \leq C\beta^{2}t$ by \eqref{eq:prime.block.bound}. The bounded-differences inequality of \citet[Theorem~6.2, p.~166]{BoucheronLugosiMassart2013} therefore proves \eqref{eq:log.partition.function.concentration}.

For the Gaussian field, write $\Gamma^{u}(h) = \bb{a}^{u}(h)\cdot\bb{\xi}$ in terms of a vector $\bb{\xi}$ of independent standard Gaussian variables, and let $\mathcal{L}(\bb{z}) = \Upsilon(h \mapsto \bb{a}^{u}(h)\cdot\bb{z})$. Its gradient is a Gibbs average of $\beta\bb{a}^{u}(h)$, so covariance matching and \eqref{eq:prime.block.bound} give
\[
\|\nabla\mathcal{L}(\bb{z})\|_{2}^{2}
\leq \beta^{2}\sup_{h \in I_{\theta}}\|\bb{a}^{u}(h)\|_{2}^{2}
= \beta^{2}\sup_{h \in I_{\theta}}\Var_{\mathrm{H}}(H^{u}(h))
\leq \frac{9\beta^{2}}{4}\sum_{p \leq X}b_{p}^{2} \leq Ct.
\]
The Gaussian concentration inequality of \citet[Theorem~5.6, p.~123]{BoucheronLugosiMassart2013} now gives the Gaussian counterpart of \eqref{eq:log.partition.function.concentration}. Both bounds are uniform for $u \in [ - 1/2,1/2]$.

It remains to prove the supremum estimates. Independence, centering and Hoeffding's inequality, together with \eqref{eq:pnt.weighted.sums} and \eqref{eq:pnt.higher.powers}, give, uniformly in $h$, the following bounds in the probability $L^{2k}$ norm $\|\cdot\|_{2k}$ for every $k \in \N$:
\begin{equation}\label{eq:pointwise.moments}
\|H^{u}(h)\|_{2k} \leq C\sqrt{kt},
\qquad
\|(H^{u})'(h)\|_{2k} \leq C\sqrt{k}\Lambda_{T},
\qquad
\|(H^{u})''(h)\|_{2k} \leq C\sqrt{k}\Lambda_{T}^{2}.
\end{equation}
The corresponding Gaussian bounds follow from the Gaussian moment formula and the same variance estimates.

For a continuously differentiable function $f \in C^{1}([ - L_{T},L_{T}])$, the fundamental theorem of calculus gives
\[
\sup_{h \in I_{\theta}}|f(h)|^{2k} \leq |f(0)|^{2k} + 2k\int_{-L_{T}}^{L_{T}}|f(h)|^{2k - 1}|f'(h)|\rd{h}.
\]
Apply this inequality first to $f = H^{u}$ and then to $f = (H^{u})'$. After taking expectations, H\"{o}lder's inequality bounds the expected integrand by $\|f(h)\|_{2k}^{2k - 1}\|f'(h)\|_{2k}$. For the first application, \eqref{eq:pointwise.moments} consequently gives
\begin{equation}\label{eq:field.supremum.moment}
\left\|\sup_{h \in I_{\theta}}|H^{u}(h)|\right\|_{2k}
\leq C\sqrt{kt}\left(1 + \frac{kL_{T}\Lambda_{T}}{\sqrt{t}}\right)^{1/(2k)}.
\end{equation}
For the second application, the first and second derivative bounds give
\begin{equation}\label{eq:derivative.supremum.moment}
\left\|\sup_{h \in I_{\theta}}|(H^{u})'(h)|\right\|_{2k}
\leq C\sqrt{k}\Lambda_{T}\left(1 + kL_{T}\Lambda_{T}\right)^{1/(2k)}.
\end{equation}
Choose an integer $k\asymp t$. Since $L_{T}\Lambda_{T} \leq t^{-\phi}\exp((1 + \theta)t)$ by \eqref{eq:effective.cutoff.bounds}, the parenthesized factors raised to $1/(2k)$ are bounded uniformly in $T$ and in the choice of $(\sig,X)$. The bound \eqref{eq:field.supremum.moment}, together with $\|\cdot\|_{2} \leq \|\cdot\|_{2k}$ by H\"{o}lder's inequality, bounds the supremum $L^{2}$ norm by $Ct$, and \eqref{eq:derivative.supremum.moment}, together with $\|\cdot\|_{1} \leq \|\cdot\|_{2k}$, bounds the expected derivative supremum by $C\sqrt{t}\Lambda_{T}$. This proves \eqref{eq:field.supremum}. The same argument gives both estimates for the Gaussian field $\Gamma^{u}$.
\end{proof}

The following consequence of \eqref{eq:field.supremum} will be used in the proof of Lemma~\ref{lem:forest.comparison}. To make the spatial approximation explicit, fix $b > 1$ and partition $I_{\theta}$ into intervals $J_{1},\ldots,J_{N}$ of positive lengths at most $be^{-t}$. Choose $h_{i} \in J_{i}$ and use the actual lengths $|J_{i}|$ as weights, including for intervals meeting the endpoints of $I_{\theta}$. For this partition $\mathcal{P}$ and a field $F \in C^{1}(I_{\theta})$, define
\[
\Upsilon_{\mathcal{P}}(F) = \log\sum_{i = 1}^{N}|J_{i}|e^{\beta F(h_{i})}.
\]
For $h \in J_{i}$, the mean value theorem gives $|F(h) - F(h_{i})| \leq be^{-t}\sup_{I_{\theta}}|F'|$. Integrating the corresponding upper and lower bounds for $e^{\beta F(h)}$ over every $J_{i}$, summing and taking logarithms gives the deterministic inequality
\[
|\Upsilon(F) - \Upsilon_{\mathcal{P}}(F)| \leq \beta be^{-t}\sup_{I_{\theta}}|F'|.
\]
Taking expectations and applying \eqref{eq:field.supremum} proves
\begin{equation}\label{eq:grid.error}
\begin{aligned}
\EE_{\mathrm{H}}[|\Upsilon(H^{u}) - \Upsilon_{\mathcal{P}}(H^{u})|]
&\leq C\beta be^{-t}\sqrt{t}\Lambda_{T} = \oo(1), \\
\EE\left[\left|\Upsilon(\Gamma^{u}) - \Upsilon_{\mathcal{P}}(\Gamma^{u})\right|\right]
&\leq C\beta be^{-t}\sqrt{t}\Lambda_{T} = \oo(1).
\end{aligned}
\end{equation}
The errors vanish uniformly over \eqref{eq:fixed.parameters} because $e^{-t}\sqrt{t}\Lambda_{T} \leq t^{1/2 - \phi} \to 0$ by \eqref{eq:effective.cutoff.bounds} and $\phi > 1/2$. In particular, at $\sig = 1/2$ with $X = T^{1/t^{4}}$, the error bound is $\OO(t^{-7/2})$. The lower bound in Lemma~\ref{lem:forest.comparison} will use the same argument for disjoint intervals $J_{i} \subseteq I_{\theta}$ around a separated subset of grid points, without requiring those intervals to cover $I_{\theta}$. If $h_{i} \in J_{i}$ and $|J_{i}| \leq be^{-t}$, integration over their union gives
\begin{equation}\label{eq:grid.lower.bound}
\Upsilon(F) \geq \log\sum_{i}|J_{i}|e^{\beta F(h_{i})} - \beta be^{-t}\sup_{I_{\theta}}|F'|.
\end{equation}
The expected error is again $\oo(1)$ for the perturbed Haar field $H^{u}$ and the Gaussian field $\Gamma^{u}$.

\subsection{Gaussian forest comparisons}\label{subsec:forests}

Lemma~\ref{lem:forest.comparison} bounds the expected free energy of the perturbed Haar field between those of the two Gaussian forests illustrated in Figure~\ref{fig:gaussian.forest.comparison}. Its proof uses Gaussian replacement, spatial discretization and covariance comparison. The proof of Lemma~\ref{lem:forest.free.energy} then evaluates the limiting forest free energies through the variational expression \eqref{eq:forest.variational}.

\newpage
\begin{lemma}[Comparison with Gaussian forests]\label{lem:forest.comparison}
Fix $\theta > - 1$, $\beta > 0$, $q_{0} < \alpha < 1$, $u \in [ - 1/2,1/2]$ and a sufficiently large even integer $b$. There exist a full Gaussian forest and a retained Gaussian forest, each with $\exp((\theta \vee 0)t + \OO(1))$ roots. Each generation advances the logarithmic scale by $\log{b}$. The prescribed stage lengths and variance rates are
\[
\begin{aligned}
\text{stage one:}\quad &At = (\alpha - q_{0})t, \qquad &&\text{variance rate }(1 + u)^{2}/2, \\
\text{stage two:}\quad &Bt = (1 - \alpha)t, \qquad &&\text{variance rate }1/2.
\end{aligned}
\]
The stage boundary and terminal level are rounded to neighboring tree generations, changing each prescribed stage length by at most $2\log{b}$. The full forest has $b$ children per parent and the retained forest has $b/2$, so their branching entropies are, respectively,
\[
\frac{\log{b}}{\log{b}} = 1, \qquad \rho_{b} = \frac{\log(b / 2)}{\log{b}}.
\]
For $\mathrm{name} \in \{\mathrm{full},\mathrm{ret}\}$, let $\mathcal{L}_{t,b}^{\mathrm{name}}$ be the corresponding leaf set and let $X^{\mathrm{name}}(h)$ be the sum of the independent centered Gaussian edge increments along the path to leaf $h$. Define the forest free energy by
\[
F_{t,b}^{\mathrm{name}}(u) = \frac{1}{t}\log\left(e^{-t}\sum_{h \in \mathcal{L}_{t,b}^{\mathrm{name}}}e^{\beta X^{\mathrm{name}}(h)}\right).
\]
Then, as $T \to \infty$ with $b$ fixed,
\begin{equation}\label{eq:forest.comparison}
\EE[F_{t,b}^{\mathrm{ret}}(u)] - \oo(1) \leq \EE_{\mathrm{H}}[f_{T}(u)] \leq \EE[F_{t,b}^{\mathrm{full}}(u)] + \oo(1).
\end{equation}
\end{lemma}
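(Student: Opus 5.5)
We reduce the Haar free energy to a Gaussian log-partition function on a grid and then compare it with the two forests by Slepian/Kahane-type interpolation. By Lemma~\ref{lem:replacement.estimates}, $\EE_{\mathrm{H}}[\Upsilon(H^{u})] = \EE[\Upsilon(\Gamma^{u})] + \OO(1)$. By \eqref{eq:grid.error}, $\Upsilon(\Gamma^{u})$ may be replaced by the discretized $\Upsilon_{\mathcal{P}}(\Gamma^{u})$ on a partition into intervals of length at most $be^{-t}$, at cost $\oo(1)$. After division by $t$, it therefore suffices to bound $t^{-1}\EE[\Upsilon_{\mathcal{P}}(\Gamma^{u})]$ above by the full forest and, through \eqref{eq:grid.lower.bound} on a separated subset of grid points, below by the retained forest.

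\textbf{Grid and embedding.} Take a $b$-adic grid. Divide $I_{\theta}$ into $\asymp e^{(\theta\vee 0)t}$ root intervals of length $\asymp 1$ when $\theta>0$; when $\theta\le 0$, use a single root of length $2L_T$. Subdivide each root interval $b$-adically down to mesh $\asymp e^{-t}$, so each generation corresponds to scale $\log b$. Two leaves whose last common ancestor lies at generation $g$ have $|h-h'|$ of order $b^{-g}$ times the root length, so $r_T(h,h') = (\text{scale of their branching point})/t + \OO(1/t)$. For $\theta<0$, the common trunk up to scale $q_0 t$ is spatially constant. It contributes an additive centered Gaussian common to all leaves, which cancels in the expected log-partition function up to the shift $\beta^2\Var/2$. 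That shift must be matched by including the same trunk variance in the forest; in the stated normalization it is absorbed into the $e^{-t}$ weights and the root count. The forest increments are assigned variances so that the covariance of $X^{\mathrm{name}}$ at two leaves is $\frac{t}{2}A_u(\text{branching depth}/t)$, matching \eqref{eq:perturbed.covariance}.

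\textbf{Upper bound.} The $b$-adic tree metric dominates Euclidean proximity only up to a boundary effect: nearby points lying in different subintervals have $r_T$ much larger than their tree overlap. Hence $\Cov(\Gamma^u(h),\Gamma^u(h'))$ need not be at most the forest covariance. We handle this by Kahane's convexity inequality, in the direction that always holds. Add to the forest independent Gaussian noise $\OO(\log t)$ per leaf to dominate the $\OO(\log t)$ errors in \eqref{eq:perturbed.covariance}. For the upper bound we need $\EE\log\sum e^{\beta\Gamma}\le\EE\log\sum e^{\beta X}$, which requires the diagonals to match and the off-diagonal covariances of $\Gamma$ to be at least those of $X$. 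This holds for the tree, since tree overlap $\le r_T$ up to $\OO(1/t)$ (common ancestry at depth $g$ forces $|h-h'|\lesssim b^{-g}$). The $\OO(\log t)$ mismatches are absorbed by adjusting a common variance term, which costs only $\OO(\log t)=\oo(t)$ in the log-partition function. The full forest therefore bounds from above.

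\textbf{Lower bound.} Keep only the children lying in alternate positions: half of the $b$ children (hence $b$ even), those at even positions, and do this recursively. Retained leaves in different subtrees at depth $g$ are then separated by at least one discarded sibling interval, giving $|h-h'|\gtrsim b^{-g-1}$ and hence $r_T\le$ tree overlap $+\OO(1/t)$. Kahane/Slepian in the reverse direction then gives that $\EE\Upsilon$ over the retained points is at least the retained forest's value minus $\oo(t)$. Combining this with \eqref{eq:grid.lower.bound} yields the left inequality of \eqref{eq:forest.comparison}.

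\textbf{Main obstacle.} We expect the main difficulty to be making the covariance comparisons uniform, with all errors $\OO(\log t)$. This covers the rounding of stage boundaries, the $u$-dependent profile $A_u$ (it is monotone, so the comparison respects it), and the bookkeeping of the common trunk when $\theta<0$.
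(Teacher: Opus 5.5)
Your route is the paper's. Gaussian replacement and \eqref{eq:grid.error} reduce the problem to a weighted grid sum for $\Gamma^{u}$. The full $b$-adic forest gives the upper bound because common ancestry forces Euclidean proximity, so that $K_X(i,j)-K_Y(i,j)\geq -E$ with $E=C(\log t+\log b)$. The alternate-children forest gives the two-sided bound $|K_X(i,j)-K_Y(i,j)|\leq E$, which is transferred back through \eqref{eq:grid.lower.bound}.

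The only methodological difference is technical. You arrange exact domination (a common variance term plus independent per-leaf noise) before invoking Kahane. The paper instead inserts the entrywise approximate bounds directly into the interpolation identity \eqref{eq:gaussian.log.partition.function.interpolation} and gets an error $\beta^{2}E$. Both cost $\oo(t)$. Your even-position rule also leaves a discarded child of length $\asymp 1/b$ at one end of every root, so for $\theta>0$ you do not need the paper's extra thinning of alternate roots.

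One step is wrong as written: the common trunk for $\theta<0$. A common centered Gaussian $Z$ does not shift the expected log-partition function at all. Pointwise, $\log\sum_{i}|J_{i}|e^{\beta(Y_{i}+Z)}=\beta Z+\log\sum_{i}|J_{i}|e^{\beta Y_{i}}$, and $\EE[Z]=0$. The shift $\beta^{2}\Var(Z)/2$ you describe belongs to $\log\EE$, not to $\EE\log$.

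This is not harmless bookkeeping. Here $\Var(Z)=(1+u)^{2}q_{0}t/2$ is of order $t$, so a genuine shift $\beta^{2}(1+u)^{2}q_{0}t/4$ would survive division by $t$. It also could not be absorbed into the fixed weight $e^{-t}$ or into the root count, which is $e^{\OO(1)}$ when $\theta\leq 0$. The inequalities \eqref{eq:forest.comparison}, stated for forests without the trunk, would then be false.

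The correct argument is the paper's. Compare $\Gamma^{u}$ with $X^{\mathrm{name}}+Z$; at leaves sharing $k$ generations its covariance is $\frac{t}{2}A_{u}(q_{0}+k\log b/t)+\OO(\log b)$, which matches \eqref{eq:perturbed.covariance} up to $E$. Then remove $Z$ exactly in expectation. The same pointwise identity shows that your common-variance adjustment in the upper bound is free, not merely $\OO(\log t)$.
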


\begin{figure}[!ht]
\centering
\includegraphics[width = 1\textwidth]{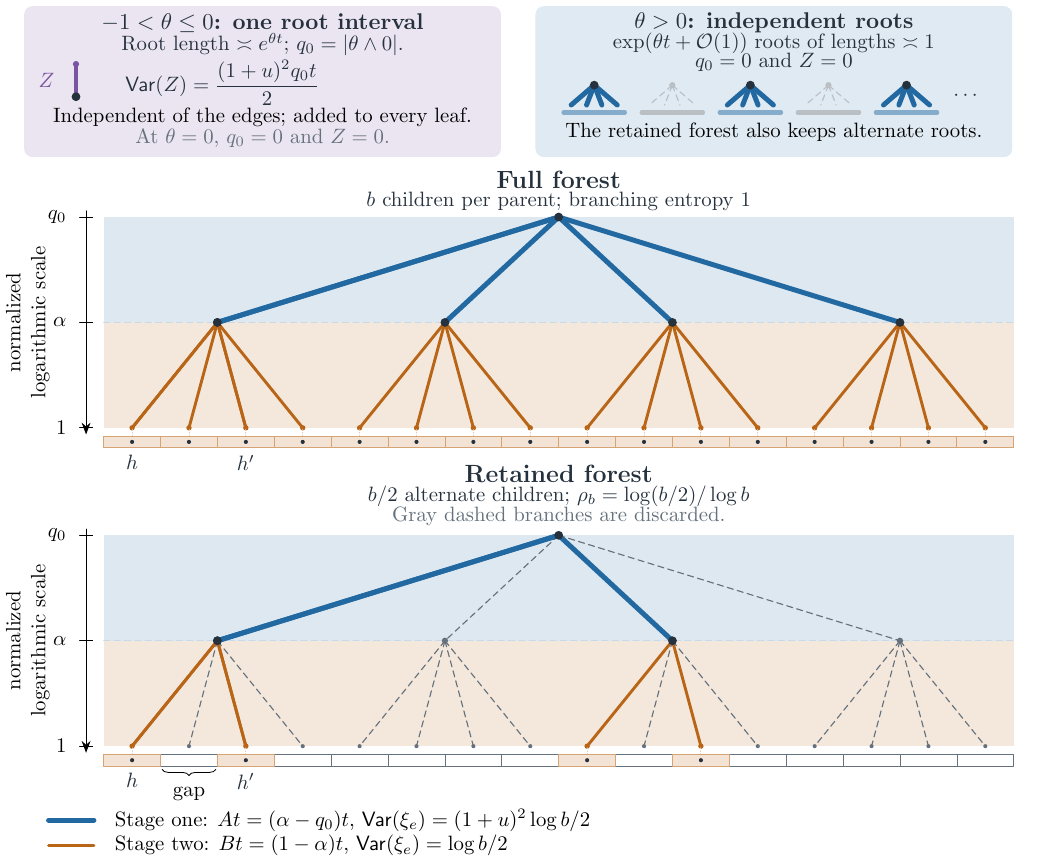}
\caption{Illustration of the full and retained Gaussian forests used in the comparison for $b = 4$. Each edge carries an independent centered Gaussian increment $\xi_{e}$. A leaf value is the sum along its path. Only two generations are shown; the stage depths are not to scale.}\label{fig:gaussian.forest.comparison}
\end{figure}

\begin{proof}[Proof of Lemma~\ref{lem:forest.comparison}]
Gaussian replacement, \eqref{eq:gaussian.replacement}, gives
\[
\left|\EE_{\mathrm{H}}[f_{T}(u)] - \frac{1}{t}\EE[\Upsilon(\Gamma^{u})]\right| = \OO(t^{-1}).
\]
It therefore suffices to compare $\Gamma^{u}$ with the two forests. Its covariance matches that of $H^{u}$, so \eqref{eq:perturbed.covariance} gives, uniformly for $h,h' \in I_{\theta}$,
\[
\begin{aligned}
\Cov(\Gamma^{u}(h),\Gamma^{u}(h')) &= \frac{t}{2}A_{u}(r_{T}(h,h')) + \OO(\log{t}), \\
A_{u}(r) &= (1 + u)^{2}\min\{r,\alpha\} + (r - \alpha)_{+}.
\end{aligned}
\]
The profile $A_{u}$ is nondecreasing and Lipschitz. The same covariance estimate holds after removing the Gaussian components corresponding to higher prime powers, since their covariance contribution is $\OO(1)$ by \eqref{eq:pnt.higher.powers}.

For the full forest, partition $I_{\theta}$ into equal root intervals of lengths comparable to $1$ when $\theta > 0$, and take $I_{\theta}$ itself as one root when $\theta \leq 0$. The number of roots is $\exp((\theta \vee 0)t + \OO(1))$. Subdivide each root into $b$ equal children, repeating until every terminal interval has length in $[e^{-t},be^{-t}]$. Denote these terminal intervals by $J_{i}$ and their centers by $h_{i}$. Assign independent centered Gaussian edge increments with the two variance rates in the statement. Rounding the stage boundary changes the covariance by $\OO(\log{b})$. When $\theta < 0$, add a common centered Gaussian $Z$, independent of the edges, to account for the scales below $q_{0}$:
\[
\Var(Z) = \frac{(1 + u)^{2}q_{0}t}{2}, \qquad
\log\sum_{i}|J_{i}|e^{\beta(X^{\mathrm{full}}(h_{i}) + Z)}
= \beta Z + \log\sum_{i}|J_{i}|e^{\beta X^{\mathrm{full}}(h_{i})}.
\]
Thus adding $Z$ leaves the expected free energy unchanged, and $\beta Z/t \to 0$ in $L^{2}$. For $\theta \geq 0$, let $Z = 0$.

If distinct leaves $h,h'$ in the same root share $k$ generations, define $r_{k} = \min\{1,q_{0} + k\log{b}/t\}$. The length of their common ancestor interval gives
\[
|h - h'| \leq Ce^{-q_{0}t}b^{-k}
\quad\Longrightarrow\quad
r_{T}(h,h') \geq r_{k} - \frac{C\log{b}}{t}.
\]
The covariance of the forest values, including $Z$, is $tA_{u}(r_{k})/2 + \OO(\log{b})$. Monotonicity and Lipschitz continuity of $A_{u}$ therefore give a lower bound on the actual Gaussian covariance. For distinct roots, which occur only when $\theta > 0$, the forest covariance is zero, while \eqref{eq:perturbed.covariance} and $A_{u}(r_{T}) \geq 0$ give $\Cov(\Gamma^{u}(h),\Gamma^{u}(h')) \geq - C\log{t}$. The diagonal variances agree up to $\OO(\log{t} + \log{b})$.

More precisely, let $\bb{X} = (\Gamma^{u}(h_{i}))_{i}$ and let $\bb{Y} = (X^{\mathrm{full}}(h_{i}) + Z)_{i}$ be independent vectors with covariance matrices $K_{X}$ and $K_{Y}$. The preceding estimates give
\[
K_{X}(i,j) - K_{Y}(i,j) \geq - E, \qquad
|K_{X}(i,i) - K_{Y}(i,i)| \leq E, \qquad
E = C(\log{t} + \log{b}).
\]
For the terminal interval lengths $|J_{i}| > 0$, define
\[
Z_{i}(s) = \sqrt{s}X_{i} + \sqrt{1 - s}Y_{i}, \qquad
p_{i}(s) = \frac{|J_{i}|e^{\beta Z_{i}(s)}}{\sum_{j}|J_{j}|e^{\beta Z_{j}(s)}}, \qquad 0 \leq s \leq 1.
\]
Gaussian integration by parts gives, for $0 < s < 1$,
\begin{equation}\label{eq:gaussian.log.partition.function.interpolation}
\begin{aligned}
\frac{\rd}{\rd s}\EE\left[\log\sum_{i}|J_{i}|e^{\beta Z_{i}(s)}\right]
&= \frac{\beta}{2}\EE\left[\sum_{i}p_{i}(s)\left(\frac{X_{i}}{\sqrt{s}} - \frac{Y_{i}}{\sqrt{1 - s}}\right)\right] \\
&= \frac{\beta^{2}}{2}\EE\left[\sum_{i}p_{i}(s)\big(K_{X}(i,i) - K_{Y}(i,i)\big)\right] \\
&\quad - \frac{\beta^{2}}{2}\EE\left[\sum_{i,j}p_{i}(s)p_{j}(s)\big(K_{X}(i,j) - K_{Y}(i,j)\big)\right].
\end{aligned}
\end{equation}
Since $\sum_{i}p_{i}(s) = \sum_{i,j}p_{i}(s)p_{j}(s) = 1$, the covariance bounds imply
\[
\frac{\rd}{\rd s}\EE\left[\log\sum_{i}|J_{i}|e^{\beta Z_{i}(s)}\right] \leq \beta^{2}E,
\qquad
\EE\left[\log\sum_{i}|J_{i}|e^{\beta X_{i}}\right]
\leq \EE\left[\log\sum_{i}|J_{i}|e^{\beta Y_{i}}\right] + \beta^{2}E.
\]
The second inequality follows by integration over $s \in [0,1]$. These are entrywise covariance comparisons; the difference $K_{X} - K_{Y}$ need not be a covariance matrix. Equation~\eqref{eq:grid.error} transfers the upper bound from the weighted grid sum to $\EE[\Upsilon(\Gamma^{u})]$, with error $\oo(1)$.

For the retained forest, keep alternate children at every subdivision, leaving $b/2$ children separated by discarded intervals. When $\theta > 0$, also retain alternate roots, changing their number by only a bounded factor. When $\theta \leq 0$, start from one central root interval of length comparable to $L_{T}$. If distinct retained leaves $h,h'$ in the same root share $k$ generations, their separation satisfies
\[
ce^{-q_{0}t}b^{-(k + 1)} \leq |h - h'| \leq Ce^{-q_{0}t}b^{-k}
\quad\Longrightarrow\quad
|r_{T}(h,h') - r_{k}| \leq \frac{C\log{b}}{t}.
\]
For leaves in distinct retained roots, the fixed gaps give $0 \leq r_{T}(h,h') \leq C/t$. Including $Z$ when $\theta < 0$ and treating the diagonal variances as above, the resulting covariance matrices satisfy
\[
\max_{i,j}|K_{X}(i,j) - K_{Y}(i,j)| \leq E,
\qquad E = C(\log{t} + \log{b}),
\]
where now $\bb{X}$ is evaluated at the retained grid points and $\bb{Y} = (X^{\mathrm{ret}}(h_{i}) + Z)_{i}$. Choose disjoint intervals $J_{i}$ around the retained grid points, contained in $I_{\theta}$ and of lengths $|J_{i}| = ce^{-t}$ for a sufficiently small fixed $c > 0$. Applying \eqref{eq:gaussian.log.partition.function.interpolation} in both directions yields
\[
\left|\EE\left[\log\sum_{i}|J_{i}|e^{\beta X_{i}}\right]
 - \EE\left[\log\sum_{i}|J_{i}|e^{\beta Y_{i}}\right]\right| \leq \beta^{2}E.
\]
Equation~\eqref{eq:grid.lower.bound} gives
\[
\EE[\Upsilon(\Gamma^{u})]
\geq \EE\left[\log\sum_{i}|J_{i}|e^{\beta X_{i}}\right] - \oo(1)
\geq \EE\left[\log\sum_{i}|J_{i}|e^{\beta Y_{i}}\right] - \beta^{2}E - \oo(1).
\]
This is the lower forest comparison.

In both constructions, the weights satisfy $ce^{-t} \leq |J_{i}| \leq be^{-t}$ for a fixed $c > 0$. Consequently,
\[
\left|\log\sum_{i}|J_{i}|e^{\beta Y_{i}} - \log\left(e^{-t}\sum_{i}e^{\beta Y_{i}}\right)\right|
\leq \max\{|\log{c}|,\log{b}\} = \OO(1 + \log{b}).
\]
Remove $Z$ in expectation and divide by $t$. The covariance comparison, weight replacement and Gaussian replacement errors are $\OO((1 + \log{t} + \log{b})/t) = \oo(1)$ for fixed $b$, proving \eqref{eq:forest.comparison}.
\end{proof}

\begin{proof}[Proof of Lemma~\ref{lem:forest.free.energy}]
For simplicity, assume that the two stage lengths are integer multiples of the block length used below; the general case requires only minor adjustments for rounding.

Write
\[
\mathcal{Z}_{t} = e^{-t}\sum_{h \in \mathcal{L}_{t}}e^{\beta X(h)}, \qquad F_{t} = \frac{1}{t}\log\mathcal{Z}_{t}, \qquad \log N_{0} = (\theta \vee 0)t + \OO(1),
\]
where $\mathcal{Z}_{t}$ is the partition function of the Gaussian forest, $F_{t}$ is the corresponding free energy, and $N_{0}$ is the number of roots. Here $X(h)$ sums the Gaussian increments over all generations of the two stages along the path to $h$, excluding the common trunk when $\theta < 0$. In that case, the sum of the increments along the common trunk is represented by the Gaussian variable $Z$ from Lemma~\ref{lem:forest.comparison}. Adding this common contribution to every leaf gives
\[
\frac{1}{t}\log\left(e^{-t}\sum_{h \in \mathcal{L}_{t}}e^{\beta(X(h) + Z)}\right) = F_{t} + \frac{\beta Z}{t}, \qquad \EE\left[\frac{\beta Z}{t}\right] = 0, \qquad \EE\left[\left(\frac{\beta Z}{t}\right)^{2}\right] = \OO(t^{-1}).
\]
Since $\beta Z/t$ tends to zero in $L^{2}$, the common trunk does not affect the claimed convergence in probability or in $L^{1}$. For the remainder of the proof, it therefore suffices to establish these limits for $F_{t}$.

For the upper bound, write $X(h) = X_{1}(h) + X_{2}(h)$, where $X_{1}(h)$ and $X_{2}(h)$ sum the increments over all generations of the first and second stages, respectively. For $\e > 0$, let $N_{1,t}(v_{1})$ count the vertices at the end of stage one satisfying $|X_{1} - Av_{1}t| \leq A\e t$, and let $N_{2,t}(v_{1},v_{2})$ count the leaves satisfying both
\[
|X_{1}(h) - Av_{1}t| \leq A\e t, \qquad |X_{2}(h) - Bv_{2}t| \leq B\e t.
\]
Along each fixed path, the two stage sums are independent and have variances $(1 + u)^{2}At/2$ and $Bt/2$, respectively. Subtracting the Gaussian costs of these constraints from the logarithmic growth rates of the numbers of available vertices and leaves gives the population exponents
\begin{equation}\label{eq:forest.bin.entropy}
\begin{aligned}
s_{1}(v_{1}) &= (\theta \vee 0) + \rho A - \frac{Av_{1}^{2}}{(1 + u)^{2}}, \\
s_{2}(v_{1},v_{2}) &= (\theta \vee 0) + \rho(A + B) - \frac{Av_{1}^{2}}{(1 + u)^{2}} - Bv_{2}^{2}.
\end{aligned}
\end{equation}
More precisely, multiplying the Gaussian interval probabilities by the corresponding numbers of vertices and leaves gives, uniformly for bounded $v_{1},v_{2}$,
\[
\begin{aligned}
\frac{1}{t}\log\EE[N_{1,t}(v_{1})] &= s_{1}(v_{1}) + \OO(\e) + \oo(1), \\
\frac{1}{t}\log\EE[N_{2,t}(v_{1},v_{2})] &= s_{2}(v_{1},v_{2}) + \OO(\e) + \oo(1).
\end{aligned}
\]
These expectation estimates use linearity and do not require independence between different paths. Here and below, $t \to \infty$ with the discretization parameters fixed.

Each stage sum has variance $\OO(t)$, and there are $\exp(\OO(t))$ leaves. A Gaussian union bound therefore gives, for sufficiently large fixed $M$,
\[
\PP\left(\max_{h \in \mathcal{L}_{t}}\max\left\{\frac{|X_{1}(h)|}{At},\frac{|X_{2}(h)|}{Bt}\right\} > M\right)
\leq \exp(Ct - cM^{2}t) \to 0.
\]
It consequently suffices to consider a finite grid of bins with half-width $\e$ covering $[ - M,M]^{2}$.

For $\eta > 0$, choose $C$ large enough for the preceding expectation estimates and let $\kappa = C\e + \eta$. Then, for $i \in \{1,2\}$ and all sufficiently large $t$,
\[
\EE[N_{i,t}] \leq e^{t(s_{i} + C\e + \eta/2)} = e^{t(s_{i} + \kappa - \eta/2)}.
\]
The arguments of $s_{i}$ and $N_{i,t}$ are suppressed here. If $s_{i} < - \kappa$, Markov's inequality gives
\[
\PP(N_{i,t} \geq 1) \leq \EE[N_{i,t}] \leq e^{-\eta t/2}.
\]
Thus bins with sufficiently negative population exponents are empty with probability tending to one. For every bin, Markov's inequality also bounds its population by
\[
\PP\left(N_{i,t} > e^{t(s_{i} + \kappa)}\right)
\leq \frac{\EE[N_{i,t}]}{e^{t(s_{i} + \kappa)}}
\leq e^{-\eta t/2}.
\]
Since there are only finitely many bins, with probability tending to one every occupied leaf bin satisfies $s_{1},s_{2} \geq - \kappa$ and contains at most $e^{t(s_{2} + \kappa)}$ leaves. Within a bin centered at $(v_{1},v_{2})$, each leaf also satisfies
\[
X(h) \leq t\left(Av_{1} + Bv_{2} + (A + B)\e\right).
\]
Combining this energy bound with the population bound controls the contribution of each bin to $\mathcal{Z}_{t}$. Summing over the fixed number of bins contributes only $\oo(1)$ after taking the logarithm and dividing by $t$, so
\[
F_{t} \leq - 1 + \sup_{\substack{|v_{1}|,|v_{2}| \leq M + \e \\ s_{1}(v_{1}),s_{2}(v_{1},v_{2}) \geq - \kappa}}
\left\{s_{2}(v_{1},v_{2}) + \beta(Av_{1} + Bv_{2})\right\}
+ \kappa + \beta(A + B)\e + \oo(1)
\]
with probability tending to one. Replacing $v_{i}$ by $|v_{i}|$ preserves $s_{1},s_{2}$ and increases the linear term. As $\e,\eta \downarrow 0$, compactness of the bounded region containing the bin centers and continuity of the constraints show that any limiting admissible pair belongs to $\mathcal{D}_{\rho}$. Hence
\[
\PP\left(F_{t} > \mathcal{F}_{\rho}(u) + a\right) \to 0, \qquad a > 0,
\]
where $\mathcal{F}_{\rho}(u)$ is the right-hand side of \eqref{eq:forest.variational}.

For the lower bound, fix a strictly feasible pair $(v_{1},v_{2})$, so that
\[
s_{1} = s_{1}(v_{1}) > 0, \qquad s_{2} = s_{2}(v_{1},v_{2}) > 0.
\]
Strictly feasible pairs are dense in $\mathcal{D}_{\rho}$: multiplying any feasible pair by a factor less than one makes both constraints strict, and this factor may tend to one. The aim is to find sufficiently many leaves whose increments remain close to these prescribed rates throughout both stages.

Group every $K$ consecutive generations of the original forest into a block of scale length $\Delta = K\log{b}$.

Starting with all $N_{0}$ roots, construct a selected population at successive block boundaries. From each selected parent, keep only those descendants at the next block boundary whose stage-$i$ block increment lies in $[(v_{i} - \e)\Delta,(v_{i} + \e)\Delta]$. Let $Z_{i}^{\mathrm{off}}$ be the number kept from one parent. Each parent has $e^{\rho\Delta}$ descendants after the $K$ original generations that make up one block. For a block in stage $i$, the increment used in the selection rule is the sum of the $K$ Gaussian edge increments along the path from a parent at the beginning of the block to a descendant at its end. This sum is Gaussian with mean zero and variance $w_{i}^{2}\Delta/2$, where $w_{1} = 1 + u$ and $w_{2} = 1$. Linearity of expectation therefore gives
\[
\mu_{i} \leqdef \EE[Z_{i}^{\mathrm{off}}] = e^{\rho\Delta}\PP\left(|G_{i} - v_{i}\Delta| \leq \e\Delta\right), \qquad G_{i} \sim \mathcal{N}\left(0,\frac{w_{i}^{2}\Delta}{2}\right).
\]
This calculation does not require independence between the increments of descendants of the same parent.

Throughout the prescribed interval $[(v_{i} - \e)\Delta,(v_{i} + \e)\Delta]$, the exponent in the Gaussian density equals $- v_{i}^{2}\Delta/w_{i}^{2} + \OO(\e\Delta)$. Multiplying the corresponding upper and lower density bounds by the interval length $2\e\Delta$ gives
\[
\PP\left(|G_{i} - v_{i}\Delta| \leq \e\Delta\right)
= \frac{2\e\sqrt{\Delta}}{w_{i}\sqrt{\pi}}
\exp\left(- \frac{v_{i}^{2}\Delta}{w_{i}^{2}} + \OO(\e\Delta)\right).
\]
For fixed $\e > 0$, the logarithm of the prefactor is $\OO(\log\Delta)$ and hence $\oo_{K}(\Delta)$ as $K \to \infty$. Taking logarithms in the formula for $\mu_{i}$ and dividing by $\Delta$ consequently yields
\[
\frac{\log\mu_{i}}{\Delta} = \rho - \frac{v_{i}^{2}}{w_{i}^{2}} + \OO(\e) + \oo_{K}(1).
\]

Although each parent has a deterministic number of descendants in the original forest, the number satisfying the increment condition is random. These selected descendants serve as its children in the selected process. Thus one generation of the selected process corresponds to $K$ generations of the original forest. Distinct parents at the same block boundary have disjoint descendant trees, whose Gaussian increments are independent of each other and of the preceding blocks. Within each stage, the offspring counts therefore have the same distribution and are independent. The selected population in stage $i$ is consequently a Galton--Watson process with offspring mean $\mu_{i}$ and offspring variance $\Var(Z_{i}^{\mathrm{off}})$.

To control the size of this population, consider a Galton--Watson process $(Z_{n})_{n\geq 0}$ with deterministic initial size $N \geq 1$, offspring mean $\mu > 0$ and finite offspring variance $\sig_{Z}^{2}$. Conditional on $Z_{n}$, the next population is the sum of $Z_{n}$ independent offspring counts. Taking expectations and applying the conditional variance formula gives
\[
\EE[Z_{n}] = N\mu^{n}, \qquad \Var(Z_{n + 1}) = \mu^{2}\Var(Z_{n}) + \sig_{Z}^{2}N\mu^{n};
\]
see, e.g., the moment calculations in \citet[Lemma~4.3.9 and Example~4.4.9]{Durrett2019}. Divide the variance recurrence by $N^{2}\mu^{2n + 2}$ and sum over the generations, starting from $\Var(Z_{0}) = 0$. Evaluating the resulting geometric sum gives
\begin{equation}\label{eq:branching.variance}
\frac{\Var(Z_{n})}{(\EE[Z_{n}])^{2}}
=
\begin{cases}
\dfrac{\sig_{Z}^{2}(1 - \mu^{-n})}{N\mu(\mu - 1)}, & \mu \neq 1, \\[4mm]
\dfrac{n\sig_{Z}^{2}}{N}, & \mu = 1.
\end{cases}
\end{equation}
In particular,
\begin{equation}\label{eq:branching.variance.bounds}
\frac{\Var(Z_{n})}{(\EE[Z_{n}])^{2}} \leq C(1 + n)\left(\frac{1}{N} + \frac{1}{N\mu^{n}}\right), \qquad
\frac{\Var(Z_{n})}{(\EE[Z_{n}])^{2}} \leq \frac{C}{N} \quad\text{if }\mu > 1.
\end{equation}
The constants may depend on the offspring distribution. In the application below, this distribution is fixed once $\e$ and $K$ have been chosen, and its variance is finite because the offspring count is bounded by $e^{\rho\Delta}$.

Let $n_{1} = At/\Delta$ and $n_{2} = Bt/\Delta$ be the numbers of blocks in the two stages, and define
\[
\widetilde{s}_{1} = (\theta \vee 0) + A\frac{\log\mu_{1}}{\Delta}, \qquad
\widetilde{s}_{2} = \widetilde{s}_{1} + B\frac{\log\mu_{2}}{\Delta}.
\]
Here $\widetilde{s}_{1}, \widetilde{s}_{2}$ are the growth exponents of the expected selected populations at the ends of the two stages, to be compared with the theoretical target exponents $s_{1}, s_{2}$ as $\e \downarrow 0$ and $K \to \infty$. Fix $0 < \eta < \min\{s_{1},s_{2}\}/2$. The estimate for $\log\mu_{i}/\Delta$ allows $\e$ to be chosen sufficiently small and then $K$ sufficiently large that
\[
|\widetilde{s}_{i} - s_{i}| < \eta, \qquad \widetilde{s}_{i} > 0, \qquad i \in \{1,2\}.
\]
Keep these choices fixed while letting $t \to \infty$.

Write $Y_{1}$ for the selected population at the end of stage one and $m_{1} = N_{0}\mu_{1}^{n_{1}}$ for its mean. The expected final population is $m_{1}\mu_{2}^{n_{2}}$, and
\[
\frac{1}{t}\log m_{1} = \widetilde{s}_{1} + \oo(1), \qquad
\frac{1}{t}\log\left(m_{1}\mu_{2}^{n_{2}}\right) = \widetilde{s}_{2} + \oo(1).
\]

If $\theta \leq 0$, then $\widetilde{s}_{1} = A\log\mu_{1}/\Delta > 0$, so $\mu_{1} > 1$. The second variance bound in \eqref{eq:branching.variance.bounds} and the Paley-Zygmund inequality give
\[
\PP(Y_{1} \geq m_{1}/2) \geq \frac{m_{1}^{2}}{4\EE[Y_{1}^{2}]} \geq c > 0.
\]
If $\theta > 0$, both $N_{0}$ and $m_{1}$ grow exponentially, so the first variance bound in \eqref{eq:branching.variance.bounds} gives
\[
\frac{\Var(Y_{1})}{m_{1}^{2}}
\leq C(1 + n_{1})\left(N_{0}^{-1} + m_{1}^{-1}\right) \to 0.
\]
Chebyshev's inequality then implies $\PP(Y_{1} \geq m_{1}/2) \to 1$. Thus, in either case, the first stage produces at least half its expected population with probability bounded away from zero.

Let $Y_{2}$ be the final selected population. Conditional on $Y_{1}$, the second stage consists of $n_{2}$ generations of the selected process starting from $Y_{1}$ individuals, and its conditional mean is $Y_{1}\mu_{2}^{n_{2}}$. On $\{Y_{1} \geq m_{1}/2\}$, Chebyshev's inequality and \eqref{eq:branching.variance} give
\[
\PP\left(Y_{2} < \frac{Y_{1}\mu_{2}^{n_{2}}}{2}\,\middle|\,Y_{1}\right)
\leq C(1 + n_{2})\left(\frac{1}{Y_{1}} + \frac{1}{Y_{1}\mu_{2}^{n_{2}}}\right)
\leq C(1 + n_{2})\left(\frac{1}{m_{1}} + \frac{1}{m_{1}\mu_{2}^{n_{2}}}\right) \to 0.
\]
The last bound tends to zero because $n_{2} = \OO(t)$, whereas both $m_{1}$ and $m_{1}\mu_{2}^{n_{2}}$ grow exponentially. This argument also covers $\mu_{2} \leq 1$. Consequently, for some $c > 0$ and all sufficiently large $t$, $\PP(Y_{2} \geq m_{1}\mu_{2}^{n_{2}}/4) \geq c$. On this event, the population size satisfies
\[
\frac{1}{t}\log Y_{2} \geq s_{2} - \eta + \oo(1).
\]
Every selected leaf also satisfies the increment condition in each block along its path, so
\[
\frac{X(h)}{t}
\geq \frac{n_{1}\Delta}{t}(v_{1} - \e) + \frac{n_{2}\Delta}{t}(v_{2} - \e)
= Av_{1} + Bv_{2} - (A + B)\e.
\]
Their total contribution to the partition function is therefore at least
\[
\mathcal{Z}_{t}
\geq e^{-t}Y_{2}\exp\left(\beta t\left(Av_{1} + Bv_{2} - (A + B)\e\right)\right).
\]
Taking logarithms, dividing by $t$ and using the lower bound on $Y_{2}$ gives
\begin{equation}\label{eq:forest.positive.probability}
\PP\left(F_{t} \geq - 1 + s_{2} + \beta(Av_{1} + Bv_{2}) - 2\eta - \beta(A + B)\e\right) \geq c
\end{equation}
for all sufficiently large $t$, with the additional $\eta$ absorbing the terms tending to zero.

Applying the Gaussian concentration argument from the proof of Lemma~\ref{lem:replacement.estimates} to the forest log-partition function gives
\[
\|\nabla\log\mathcal{Z}_{t}\|^{2} \leq \beta^{2}\max_{h \in \mathcal{L}_{t}}\Var(X(h)) \leq Ct, \qquad
\PP\left(|F_{t} - \EE[F_{t}]| > a\right) \leq 2e^{-ca^{2}t}, \quad a > 0.
\]
If $\EE[F_{t}] < L$, the event $\{F_{t} \geq L\}$ requires a deviation of at least $L - \EE[F_{t}]$ from the mean. Thus, for a fixed threshold $L$ and $p \in (0,1]$,
\[
\PP(F_{t} \geq L) \geq p
\quad\Longrightarrow\quad
\EE[F_{t}] \geq L - \sqrt{\frac{\log(2/p)}{ct}}.
\]
Applying this implication to the preceding estimate \eqref{eq:forest.positive.probability} for $F_{t}$ yields, for the fixed choices of $\eta,\e$ and $K$,
\[
\liminf_{t\to\infty}\EE[F_{t}]
\geq - 1 + s_{2}(v_{1},v_{2}) + \beta(Av_{1} + Bv_{2}) - 2\eta - \beta(A + B)\e.
\]

For each sufficiently small $\eta > 0$, choose $\e$ sufficiently small and then $K$ sufficiently large as above, keeping both fixed while $t \to \infty$. Letting $\eta \downarrow 0$, with $\e$ also tending to zero, gives
\[
\liminf_{t\to\infty}\EE[F_{t}]
\geq - 1 + s_{2}(v_{1},v_{2}) + \beta(Av_{1} + Bv_{2}).
\]
Taking the supremum over strictly feasible pairs, which are dense in $\mathcal{D}_{\rho}$, yields
\[
\liminf_{t\to\infty}\EE[F_{t}] \geq \mathcal{F}_{\rho}(u).
\]
For any fixed $a > 0$, the mean is therefore at least $\mathcal{F}_{\rho}(u) - a/2$ for all sufficiently large $t$. Gaussian concentration then implies
\[
\PP\left(F_{t} < \mathcal{F}_{\rho}(u) - a\right) \to 0.
\]
Together with the upper bound, this proves convergence in probability.

Finally, $|\mathcal{L}_{t}| = \exp(\OO(t))$ and $\max_{h}\Var(X(h)) = \OO(t)$. The Gaussian tail bound and a union bound over the leaves give
\begin{equation}\label{eq:forest.maximum.tail}
\PP\left(\max_{h \in \mathcal{L}_{t}}|X(h)| > x\right)
\leq \min\left\{1,2\exp\left(Ct - \frac{cx^{2}}{t}\right)\right\}, \qquad x > 0.
\end{equation}
Set $x_{0} = t\sqrt{(C + 1)/c}$. Then \eqref{eq:forest.maximum.tail} gives
\[
\begin{aligned}
\EE\left[\max_{h \in \mathcal{L}_{t}}|X(h)|^{2}\right]
&= 2\int_{0}^{\infty}x\PP\left(\max_{h \in \mathcal{L}_{t}}|X(h)| > x\right)\rd{x} \\
&\leq 2\int_{0}^{x_{0}}x\rd{x} + 4e^{Ct}\int_{x_{0}}^{\infty}xe^{-cx^{2}/t}\rd{x} \\
&= x_{0}^{2} + \frac{2t}{c}\exp\left(Ct - \frac{cx_{0}^{2}}{t}\right)
= \frac{C + 1}{c}t^{2} + \frac{2t}{c}e^{-t}
= \OO(t^{2}).
\end{aligned}
\]
Bounding every summand in $\mathcal{Z}_{t}$ between $e^{-\beta\max_{h}|X(h)|}$ and $e^{\beta\max_{h}|X(h)|}$ also gives
\[
|F_{t}| \leq \left|\frac{1}{t}\log\left(e^{-t}|\mathcal{L}_{t}|\right)\right| + \frac{\beta}{t}\max_{h \in \mathcal{L}_{t}}|X(h)|.
\]
The first term is bounded, and the preceding maximum estimate controls the second in $L^{2}$. Thus $\limsup_{t\to\infty}\EE[|F_{t}|^{2}] < \infty$, so uniform integrability upgrades convergence in probability to convergence in $L^{1}$.
\end{proof}

\section*{Funding}
\addcontentsline{toc}{section}{Funding}

F.\ Ouimet is supported by the Fonds de recherche du Québec -- Nature et technologies (FRQNT) through the Relève professorale grant doi:10.69777/377977, and by the Natural Sciences and Engineering Research Council of Canada (NSERC) through Discovery Grant RGPIN-2026-04471 and Discovery Launch Supplement DGECR-2026-00449.

\renewcommand{\bibsection}{\section*{References}\addcontentsline{toc}{section}{References}}
\setlength{\bibsep}{0pt plus 0ex}

\bibliographystyle{authordate1-doi-mr}
\bibliography{bib}

\end{document}